\newtheorem{proposition}{Proposition}%[section]
  \newtheorem{theorem}[proposition]{Theorem}
  \newtheorem{lemma}[proposition]{Lemma}
  \newtheorem{corollary}[proposition]{Corollary}
\theoremstyle{definition}
  \newtheorem{definition}[proposition]{Definition}
  \newtheorem{remark}[proposition]{Remark}
\numberwithin{equation}{section}
\numberwithin{proposition}{section}
\DeclareMathOperator{\Mor}{Mor}
\DeclareMathOperator{\M}{\mathsf{M}}
\DeclareMathOperator{\C}{C}
\DeclareMathOperator{\B}{B}
\newcommand{\cst}{\ifmmode\mathrm{C}^*\else{$\mathrm{C}^*$}\fi}
\newcommand{\tens}{\otimes}
\newcommand{\id}{\mathrm{id}}
\newcommand{\comp}{\circ}
\newcommand{\I}{\mathds{1}}
\newcommand{\bz}{\boldsymbol{z}}
\newcommand{\eps}{\varepsilon}
\newcommand{\mysetminus}{{\;\!\scriptstyle{\fgebackslash}\;\!}}
\newcommand{\CC}{\mathbb{C}}
\newcommand{\TT}{\mathbb{T}}
\newcommand{\ZZ}{\mathbb{Z}}
\renewcommand{\SS}{\mathbb{S}}
\newcommand{\VV}{\mathbb{V}}
\newcommand{\NN}{\mathbb{N}}
\newcommand{\RR}{\mathbb{R}}
\newcommand{\sA}{\mathsf{A}}
\newcommand{\sB}{\mathsf{B}}
\newcommand{\cH}{\mathscr{H}}
\newcommand{\btens}{\boxtimes}
\newcommand{\SUq}{\operatorname{SU}_q(2)}
\newcommand{\CstT}{\mathfrak{C}^*_\mathbb{T}}
\newcommand{\XX}{\mathbb{X}}
\newcommand{\bn}{\boldsymbol{n}}
\newcommand{\bu}{\boldsymbol{u}}
\newcommand{\bV}{\boldsymbol{V}}
\newcommand{\bv}{\boldsymbol{v}}
\newcommand{\DSUq}{\Delta_{\scriptscriptstyle{\operatorname{SU}_q\!(2)}\!}}
\begin{document}

\title[Braided Quantum Spheres]{Podle\'s spheres for the Braided Quantum $\operatorname{SU}(2)$}

\author[P.M.~So{\l}tan]{Piotr M.~So{\l}tan}
\address{Department of Mathematical Methods in Physics, Faculty of Physics, University of Warsaw, Poland}
\email{piotr.soltan@fuw.edu.pl}

\begin{abstract}
Starting with the braided quantum group $\SUq$ for a complex deformation parameter $q$ we perform the construction of the quotient $\operatorname{SU}_q(2)/\TT$ which serves as a model of a quantum sphere. Then we follow the reasoning of Podle\'{s} who for real $q$ classified quantum spaces with the action of $\operatorname{SU}_q(2)$ with appropriate spectral properties. These properties can also be expressed in the context of the braided quantum $\operatorname{SU}_q(2)$ (with complex $q$) and we find that they lead to precisely the same family of quantum spaces as found by Podle\'{s} for the real parameter $|q|$.
\end{abstract}

\subjclass[2010]{58B32 (primary); 17B37, 46L89 (secondary)}

% 17B37  	Quantum groups (quantized enveloping algebras) and related deformations
% 46L51  	Noncommutative measure and integration
% 46L89  	Other "noncommutative'' mathematics based on $C^*$-algebra theory
% 46L85  	Noncommutative topology
% 46L99  	None of the above, but in this section
% 58B32  	Geometry of quantum groups

\maketitle

% \tableofcontents

% change $c$ to $\varsigma$ and $s$ to $\sigma$

\section{Introduction}

% sprawdzić czy doktorat podlesia był w 1989 czy wcześniej, jeśli tak, to zmienić poniżej (sprawdzić ile ma stron i poprawić w pliku .bib)
Quantum spheres were defined and analyzed by Podle\'s in his \cite{spheres} and later in \cite{podlesdiff,podlesdiff2,podles} (see also \cite{podlesqm}). These remarkable quantum spaces were the source of tremendous activity in the field of non-commutative geometry (see \cite{ldgeometry} and references therein). They were defined as homogeneous spaces for the quantum group $\SUq$ defined in \cite{twisted} satisfying conditions which for $q=1$ characterize the two-dimensional sphere $\SS^2$. The quantum group and consequently the quantum spheres were defined for a real deformation parameter $q\in[-1,1]\mysetminus\{0\}$ (in fact, as it is pointed out in \cite[Remark 3]{spheres} the quantum spheres do not depend on the sign of $q$).

In a more recent development a deformation of the \cst-algebra $\C(\operatorname{SU}(2))$ of continuous functions on the compact group $\operatorname{SU}(2)$ for a complex $q$ satisfying $0<|q|<1$ was considered in \cite{KMRW} (but see also \cite{dhhmw}). This new deformation no longer carries the structure of a compact quantum group, but is instead a \emph{compact braided quantum group}. This means that for complex $q$ the algebra $\C(\SUq)$ has a coproduct whose values lie in a certain \emph{braided} product of $\C(\SUq)$ with itself instead of the usual minimal tensor product. Braided quantum groups were introduced studied by Majid e.g.~in \cite{majid91braidedgr,majid93braidedlin,majid93braidedgroups,majidfoundations} in the purely algebraic context. The \cst-algebraic version of this theory was first studied in \cite{Karpacz} and later in \cite{KMRW} and \cite{twisted2}. See also \cite{proj}, where braided quantum groups arise naturally in the von Neumann algebraic description of semidirect products of locally compact quantum groups. The connection with semidirect products was already hinted at in \cite{wysoczanski} (for real $q$) and developed fully in \cite[Section 6]{KMRW}.

In this paper we study the construction of Podle\'{s} from \cite{spheres} for the braided quantum group $\SUq$. However we will not be developing the theory of actions and isotypical components used by Podle\'{s}, but instead focus on the analogy with the quantum sphere obtained by a construction fully analogous to the quotient of a (non-braided) quantum  group by a closed quantum subgroup. In the work of Podle\'{s} this construction leads to the so called \emph{standard sphere} and the other quantum spheres share many properties of the standard sphere. We will repeat the quotient construction in the case of complex $q$, i.e.~for the \emph{braided} quantum group $\SUq$. Later we will use the notion of a tensor product of representations of a braided quantum group from \cite{KMRW} to define an irreducible three-dimensional representation of $\SUq$ and find all compact quantum spaces with an action of $\SUq$ (in a braided sense, see Sections \ref{theQuotientS2}, \ref{simplespaces} and \ref{bps}) which resembles the action on the sphere obtained via the quotient construction. It turns out that these quantum spaces are exactly the quantum spaces found by Podle\'{s} for the real deformation parameter $|q|$. Let us point out that Podle\'{s} worked with the \emph{left} quotient $\TT\,\backslash\SUq$ and hence got a \emph{right} action of $\SUq$ on the quotient sphere, while we chose to use the \emph{right} quotient $\SUq/\TT$ and therefore obtained a \emph{left} action of $\SUq$ on the quotient sphere. This is, of course, of little importance, but this convention works better in the context in which all considered \cst-algebras come equipped with a left action of $\TT$ which is necessary for the braided monoidal structure leading to the braided $\SUq$ (cf.~\cite{KMRW} and Section \ref{braids} and \ref{bSUq}).

The quantum spaces underlying the braided quantum groups $\SUq$ (for all complex $q$ such that $|q|<1$) are the same, i.e.~the corresponding \cst-algebras are isomorphic. This was proved already in \cite[Appendix A.2]{twisted} for real $q$, then stated in \cite[Section 3.1]{dhhmw} and proved in \cite[Section 2]{KMRW}. Similarly Podle\'{s} spheres are all isomorphic to a certain extension $\mathcal{K}\oplus\mathcal{K}$ by $\C(\TT)$, where $\mathcal{K}$ is the algebra of compact operators on a separable Hilbert space (\cite{sheu}, see also \cite[Proposition 4]{spheres}) or to the minimal unitization of $\mathcal{K}$ (the standard quantum sphere). Incidentally all these algebras can be viewed as graph \cst-algebras of certain directed graphs (\cite{HoSz}). In particular, one can interpret the braided quantum group structure of $\SUq$ for complex $q$ as yet another structure on the same quantum space which, as a quantum space, is independent of $q$.

We will now briefly describe the contents of the paper: in Section \ref{braids} we gather certain facts about braided tensor products needed for the construction of the braided $\SUq$ and the corresponding spheres. Section \ref{bSUq} recalls the details of the braided version of $\operatorname{SU}(2)$ and in Section \ref{theQuotientS2} we study the quotient sphere
\[
\SS^2_q=\SUq/\TT.
\]
Section \ref{theV} is devoted to the definition and study of the three-dimensional irreducible representation of $\SUq$ found as an irreducible subrepresentation of the tensor square of the fundamental representation (cf.~\cite[Section 5]{KMRW}) and in Section \ref{simplespaces} we find the commutation relations implied by existence of an action of the braided quantum group $\SUq$ satisfying certain conditions suggested by the quotient construction from Section \ref{theQuotientS2} (cf. Section \ref{inTerms}). For real $q$ these conditions were interpreted by Podle\'{s} as saying that each irreducible representation of the quantum group $\operatorname{SO}_q(3)$ appears with multiplicity $1$ in the spectrum of the action. While such an interpretation is possible also in the braided context, we choose to simply follow the example of the quotient sphere. In Section \ref{bps} we show that \cst-algebras defined by relations derived in Section \ref{simplespaces} indeed carry and action of the braided $\SUq$ and that they coincide with Podle\'{s} spheres from \cite{spheres}.

The appendices \ref{vv} and \ref{directcheck} contain certain simple computations which are useful in other sections.

Throughout the paper $q$ is a complex parameter such that $0<|q|<1$ and we set
\[
\zeta=\tfrac{q}{\overline{q}}.
\]
In Sections \ref{theV}, \ref{simplespaces}, \ref{bps} and the Appendix we will use the following shorthands:
\[
\sigma=\sqrt{1+|q|^2},\qquad\varsigma=|q|^2
\]
which will lighten the notation. Furthermore we will use the following conventions:
\begin{itemize}
\item if $\sA$ is a normed algebra and $A,B\subset\sA$ then the symbol $[AB]$ will denote the closure of $\operatorname{span}\{ab\left.\right|\,a\in{A},\:b\in{B}\}$,
\item given $n\in\NN$ we will denote by $M_n$ the \cst-algebra of $n\times{n}$ matrices over $\CC$ which will also be identified with $\B(\CC^n)$.
\end{itemize}

\section{The category of $\TT$-\cst-algebras and its monoidal structure}\label{braids}

In this section we summarize the elements of \cite{KMRW,MRWtwist} which we need to state our results. We will also try to stick to the notation and terminology introduced in those papers.

For \cst-algebras $\sA$ and $\sB$ the symbol $\Mor(\sA,\sB)$ will denote the set of \emph{morphisms} between \cst-algebras as defined in \cite[Section 0]{unbo}. Since all \cst-algebras considered in this paper will be unital, all morphisms will simply be unital $*$-homomorphism.

Following \cite{KMRW} (cf.~also \cite{MRWtwist,Karpacz}) we denote by $\CstT$ the category of \cst-algebras equipped with an action of the circle group $\TT$ and equivariant morphisms. For each $\sA\in\operatorname{Ob}(\CstT)$ the corresponding action of $\TT$ is described by $\rho^\sA\in\Mor(\sA,\C(\TT)\tens\sA)$. For $\sA,\sB\in\operatorname{Ob}(\CstT)$ we will use $\Mor_\TT(\sA,\sB)$ to denote the set of morphisms from $\sA$ to $\sB$ in the category $\CstT$, i.e.~those $\Phi\in\Mor(\sA,\sB)$ which satisfy
\[
\rho^\sB\comp\Phi=(\id\tens\Phi)\comp\rho^\sA.
\]
Often we will use an alternative picture of actions of $\TT$, namely for $\sA\in\operatorname{Ob}(\CstT)$ instead of $\rho^{\sA}$ we can introduce automorphisms $\bigl\{\rho^{\sA}_z\,\bigl.\bigr|\,z\in\TT\bigr\}$ of $\sA$ by setting
\[
\rho^{\sA}_z=(\delta_z\tens\id)\comp\rho^{\sA},\qquad{z}\in\TT,
\]
where $\delta_z$ is the evaluation functional $\C(\TT)\ni{f}\mapsto{f(z)}\in\CC$. The canonical generator of $\C(\TT)$, i.e.~the identity function
\[
\TT\ni{z}\longmapsto{z}\in\CC
\]
will be denoted by $\bz$.

We will now recall the monoidal structure on the category $\CstT$ necessary for the construction of the braided quantum $\operatorname{SU}(2)$ group. For this we will need the \cst-algebra of functions on the quantum two-torus (\cite{connes1980,rieffelTori}). Given a complex number $\mu$ of absolute value $1$ the \cst-algebra $\C(\TT^2_\mu)$ of continuous functions on the quantum two-torus for the deformation parameter $\mu$ is defined as the universal \cst-algebra generated by two unitary elements $u$ and $v$ such that $uv=\mu{v}u$.

The monoidal structure on $\CstT$ is a bifunctor $\btens_\zeta$ from $\CstT\times\CstT$ to $\CstT$ which on objects is given by
\[
\sA\btens_\zeta\sB=\bigl[j_1(\sA)j_2(\sB)\bigr],
\]
where $j_1:\sA\hookrightarrow\M(\C(\TT^2_\zeta)\tens\sA\tens\sB)$ and $j_2:\sB\hookrightarrow\M(\C(\TT^2_\zeta)\tens\sA\tens\sB)$ are defined by
\[
j_1(a)=u^n\tens{a}\tens\I,\qquad{j_2(b)}=v^m\tens\I\tens{b}
\]
for $a\in\sA$ such that $\rho^\sA(a)=\bz^n\tens{a}$ and $b\in\sB$ such that $\rho^\sB(b)=\bz^m\tens{b}$ (in what follows we will say that $a$ and $b$ are \emph{homogeneous} of degree $n$ and $m$ respectively, and write $\deg(a)=n$ and $\deg(b)=m$). The \cst-algebra $\sA\btens_\zeta\sB$ is called the \emph{crossed product} in \cite{MRWtwist} (cf.~also \cite{Karpacz}) and a \emph{braided tensor product} in \cite{KMRW}. We will stick with the latter terminology as it coincides with the one used earlier by Majid e.g.~in \cite{majid93braidedlin,majid91braidedgr,majidfoundations}.

The morphisms $j_1$ and $j_2$ viewed as maps from $\sA$ and $\sB$ to $\sA\btens_\zeta\sB$ will from now on be denoted by $\jmath_1$ and $\jmath_2$ and will be understood as part of the structure of $\sA\btens_\zeta\sB$. In particular we will use the same symbols ($\jmath_1$ and $\jmath_2$) for any $\TT$-\cst-algebras involved. There is a unique action of $\TT$ on $\sA\btens_\zeta\sB$ such that both $\jmath_1$ and $\jmath_2$ are equivariant: $\jmath_1\in\Mor_\TT(\sA,\sA\btens_\zeta\sB)$ and $\jmath_2\in\Mor_\TT(\sB,\sA\btens_\zeta\sB)$.

Occasionally we will use $\jmath_1,\jmath_2$ and $\jmath_3$ for the canonical inclusions of factors in a triple braided product.

\begin{remark}\label{actionT2}
The construction of $\sA\btens_\zeta\sB$ shows that apart from the action of $\TT$ on this \cst-algebra there is also an action of $\TT^2$ which comes from the action of $\TT^2$ on $\TT^2_\zeta$ by scaling $u$ and $v$ by two different complex numbers of modulus $1$. The canonical action of $\TT$ on $\sA\btens_\zeta\sB$ is then the restriction of the action of $\TT^2$ to the diagonal subgroup.
\end{remark}

The action of $\btens_\zeta$ on morphisms assigns to $\Phi\in\Mor_\TT(\sA,\sA')$ and $\Psi\in\Mor_\TT(\sB,\sB')$ the unique element of $\Phi\btens_\zeta\Psi\in\Mor_\TT(\sA\tens\sB,\sA'\tens\sB')$ such that
\[
(\Phi\btens_\zeta\Psi)\bigl(\jmath_1(a)\jmath_2(b)\bigr)=
\jmath_1\bigl(\Phi(a)\bigr)\jmath_2\bigl(\Psi(b)\bigr),\qquad{a}\in\sA,\:b\in\sB
\]
(\cite{MRWtwist}, \cite[Section 3]{KMRW}).

\begin{remark}
In some calculations the following formula is of great importance: for $\sA,\sB\in\operatorname{Ob}(\CstT)$ and homogeneous elements $a\in\sA$, $b\in\sB$ we have
\begin{equation}\label{braid}
\jmath_2(a)\jmath_1(a)=\overline{\zeta}^{\,\deg(a)\deg(b)}\jmath_1(b)\jmath_2(a)
\end{equation}
(\cite[Formula (1.5)]{KMRW}).
\end{remark}

\begin{remark}\label{subalg}
If $\mathscr{A}\subset\sA$ and $\mathscr{B}\subset\sB$ are $\TT$-invariant \cst-subalgebras then $\mathscr{A}\btens_\zeta\mathscr{B}$ is a subalgebra of $\sA\btens_\zeta\sB$ in the natural way, i.e.~the embeddings $\jmath_1$ and $\jmath_2$ of $\mathscr{A}$ and $\mathscr{B}$ into $\mathscr{A}\btens_\zeta\mathscr{B}$ are restrictions of $\jmath_1\in\Mor_\TT(\sA,\sA\btens_\zeta\sB)$ and $\jmath_2\in\Mor_\TT(\sB,,\sA\btens_\zeta\sB)$ to $\mathscr{A}$ and $\mathscr{B}$. This follows easily from the properties of the minimal tensor product (see e.g.~\cite[Section 1.2]{wassermann}).
\end{remark}

Of course \cst-algebras with trivial action of $\TT$ are also objects of $\CstT$. It can be checked that if $\sA,\sB\in\operatorname{Ob}(\CstT)$ and $\rho^\sB(b)=\I\tens{b}$ for all $b\in\sB$ then $\sA\btens_\zeta\sB\cong\sA\tens\sB$ with $\jmath_1$ and $\jmath_2$ the usual embeddings onto the first and second leg of the tensor product (\cite[Example 3.13]{MRWtwist}).

\section{The braided quantum $\operatorname{SU}(2)$ group}\label{bSUq}

Following \cite{KMRW} we define $\C(\SUq)$ to be the universal \cst-algebra generated by two elements $\alpha$ and $\gamma$ with relations
\begin{equation}\label{SUq2rel}
\begin{aligned}
\alpha^*\alpha+\gamma^*\gamma&=\I,&\alpha\gamma&=\overline{q}\gamma\alpha,\\
\alpha\alpha^*+|q|^2\gamma^*\gamma&=\I,&\gamma\gamma^*&=\gamma^*\gamma.
\end{aligned}
\end{equation}

\begin{remark}
Relations \eqref{SUq2rel} imply that
\begin{equation}\label{add}
\alpha\gamma^*=q\gamma^*\alpha.
\end{equation}
Indeed, since $\alpha^*\alpha=\I-\gamma^*\gamma$ and $\gamma$ is normal, we have
\[
\gamma\alpha^*\alpha\gamma^*
=\gamma(\I-\gamma^*\gamma)\gamma^*
=\gamma\gamma^*-(\gamma\gamma^*)^2
=(\I-\gamma\gamma^*)\gamma\gamma^*
=(\I-\gamma^*\gamma)\gamma\gamma^*
=\alpha^*\alpha\gamma\gamma^*.
\]
Similarly $|q|^2\gamma\gamma^*=\I-\alpha\alpha^*$, so
\[
|q|^2\alpha^*\gamma\gamma^*\alpha=\alpha^*(\I-\alpha\alpha^*)\alpha=\alpha^*\alpha-(\alpha^*\alpha)^2=(\I-\alpha^*\alpha)\alpha^*\alpha=\gamma^*\gamma\alpha^*\alpha=\gamma\gamma^*\alpha^*\alpha.
\]
Using these identities and the fact that $\alpha\gamma=\overline{q}\gamma\alpha$ we immediately find
{\allowdisplaybreaks
\begin{align*}
(\alpha\gamma^*-q\gamma^*\alpha)^*(\alpha\gamma^*-q\gamma^*\alpha)
&=\gamma\alpha^*\alpha\gamma^*-\overline{q}\alpha^*\gamma\alpha\gamma^*
-q\gamma\alpha^*\gamma^*\alpha+|q|^2\alpha^*\gamma\gamma^*\alpha\\
&=\gamma\alpha^*\alpha\gamma^*-\alpha^*\alpha\gamma\gamma^*
-q\gamma\alpha^*\gamma^*\alpha+|q|^2\alpha^*\gamma\gamma^*\alpha\\
&=-q\gamma\alpha^*\gamma^*\alpha+|q|^2\alpha^*\gamma\gamma^*\alpha\\
&=-\gamma\gamma^*\alpha^*\alpha+|q|^2\alpha^*\gamma\gamma^*\alpha\\
&=-\gamma\gamma^*\alpha^*\alpha+\gamma\gamma^*\alpha^*\alpha=0.
\end{align*}
} % \allowdisplaybreaks
\end{remark}

The following lemma is a simple consequence of the defining relations of $\C(\SUq)$ (cf.~\cite[Theorem 1.2]{twisted}):

\begin{lemma}\label{baza}
For $n\in\ZZ$ and $k,l\in\ZZ_+$ define
\[
a_{n,k,l}=\begin{cases}
\alpha^n\gamma^k{\gamma^*}^l&n\geq{0},\\
{\alpha^*}^{-n}\gamma^k{\gamma^*}^l&n<0.
\end{cases}
\]
Then $\mathscr{A}=\operatorname{span}\bigl\{a_{n,k,l}\,\bigl.\bigr|\,n\in\ZZ,\:k,l\in\ZZ_+\bigr\}$ is a dense unital $*$-subalgebra of $\C(\SUq)$.
\end{lemma}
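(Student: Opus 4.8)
The plan is to show that $\mathscr{A}$ is precisely the $*$-subalgebra of $\C(\SUq)$ generated by $\alpha$ and $\gamma$; density of $\mathscr{A}$ then follows at once, since $\C(\SUq)$ is by definition the \emph{universal} \cst-algebra generated by these two elements, so the $*$-algebra they generate is norm-dense. Thus the real content is purely algebraic: I must check that the span $\mathscr{A}$ contains the unit, is stable under the $*$-operation, and is closed under multiplication. Unitality is immediate since $\I=a_{0,0,0}$, and as $\alpha=a_{1,0,0}$, $\gamma=a_{0,1,0}$ lie in $\mathscr{A}$, once closure under the algebra operations is established $\mathscr{A}$ will contain every word in $\alpha,\gamma,\alpha^*,\gamma^*$ and, conversely, consist only of such words.

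The computational engine is a small collection of ``straightening'' relations extracted from \eqref{SUq2rel} and \eqref{add}: the identities $\alpha\gamma=\overline q\gamma\alpha$ and $\alpha\gamma^*=q\gamma^*\alpha$, together with their adjoints, let me move any power of $\alpha$ or $\alpha^*$ past any power of $\gamma$ or $\gamma^*$ at the cost of a scalar, while $\gamma\gamma^*=\gamma^*\gamma$ lets me collect the $\gamma$-part into the normal form $\gamma^k{\gamma^*}^l$. For the $*$-operation this already suffices: computing $(\alpha^n\gamma^k{\gamma^*}^l)^*=\gamma^l{\gamma^*}^k{\alpha^*}^n$ and straightening ${\alpha^*}^n$ to the left produces a scalar multiple of $a_{-n,l,k}$ (and one argues similarly when $n<0$), so $\mathscr{A}$ is $*$-stable.

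Multiplicative closure is where the only genuine work lies. Multiplying two normal-ordered monomials and straightening all $\alpha$- and $\alpha^*$-factors to the left reduces everything, up to a scalar and the harmless merging of the $\gamma$-parts, to expressions of the form $\alpha^n{\alpha^*}^m\,\gamma^{k}{\gamma^*}^{l}$, so the crux is to rewrite a mixed power $\alpha^n{\alpha^*}^m$ in normal form. The mechanism is the relation $\alpha\gamma^*\gamma=|q|^2\gamma^*\gamma\,\alpha$ (which shows that $\gamma^*\gamma$ is ``almost central''), combined with $\alpha\alpha^*=\I-|q|^2\gamma^*\gamma$. A short induction on $m$, using $\alpha\,P(\gamma^*\gamma)=P(|q|^2\gamma^*\gamma)\,\alpha$ for a polynomial $P$, shows that $\alpha^m{\alpha^*}^m$ is itself a polynomial in $\gamma^*\gamma=\gamma\gamma^*$, hence a combination of the $a_{0,r,r}$. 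When $n\neq m$ one factors out the surplus $\alpha$'s or $\alpha^*$'s and straightens them through the resulting polynomial in $\gamma^*\gamma$, again only picking up powers of $|q|^2$; in every case the outcome is a finite linear combination of the $a_{n',k',l'}$.

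I expect the reduction of $\alpha^n{\alpha^*}^m$ to be the main, and essentially the only, obstacle: the other verifications are bookkeeping with the straightening relations, whereas here one must genuinely exploit $\alpha\alpha^*=\I-|q|^2\gamma^*\gamma$ to convert a product of non-commuting generators into the normal-ordered span. Everything else --- unitality, $*$-stability, and the passage from ``the $*$-algebra generated by $\alpha,\gamma$'' to density --- is routine.
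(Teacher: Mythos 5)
Your argument is correct and is precisely the standard normal-ordering proof that the paper omits (it only cites \cite[Theorem 1.2]{twisted} and calls the lemma a simple consequence of the relations): unitality and $*$-stability are bookkeeping, and the only substantive point is that $\alpha^m{\alpha^*}^m$ is a polynomial in $\gamma^*\gamma$, which your induction via $\alpha\alpha^*=\I-|q|^2\gamma^*\gamma$ and $\alpha(\gamma^*\gamma)=|q|^2(\gamma^*\gamma)\alpha$ establishes. One small imprecision: straightening a product such as $a_{-p,k,l}\,a_{n',k',l'}$ with $p,n'>0$ produces ${\alpha^*}^{p}\alpha^{n'}\gamma^{k+k'}{\gamma^*}^{l+l'}$, i.e.\ the reversed order ${\alpha^*}^m\alpha^n$ rather than the form $\alpha^n{\alpha^*}^m$ you announce, and since $\alpha$ and $\alpha^*$ do not $q$-commute you cannot simply swap them; however the identical induction with the other defining relation $\alpha^*\alpha=\I-\gamma^*\gamma$ (together with $\alpha^*(\gamma^*\gamma)=|q|^{-2}(\gamma^*\gamma)\alpha^*$) disposes of this case, so nothing essential is missing.
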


\begin{remark}
It can be shown that the system $\{a_{n,k,l}\}_{n\in\ZZ,k,l\in\ZZ_+}$ is linearly independednt, but we will not need this fact except for its much simpler consequence which we prove directly in Section \ref{irr}. Still it is very convenient for certain computations to express elements of $\mathscr{A}$ as linear combinations of this system.
\end{remark}

The \cst-algebra $\C(\SUq)$ is equipped with and action of $\TT$ described by
\[
\rho^{\C(\SUq)}\in\Mor\bigl(\C(\SUq),\C(\TT)\tens\C(\SUq)\bigr)
\]
determined by
\[
\rho^{\C(\SUq)}(\alpha)=\I\tens\alpha,\qquad\rho^{\C(\SUq)}(\gamma)=\bz\tens\gamma.
\]
Thus $\C(\SUq)$ becomes an object of the monoidal category $\CstT$.

We know from \cite[Theorem 1.1]{KMRW} that there exists a unique
\[
\DSUq\in\Mor_\TT\bigl(\C(\SUq),\C(\SUq)\btens_\zeta\C(\SUq)\bigr)
\]
such that
\begin{equation}\label{DeltaSUq}
\begin{split}
\DSUq(\alpha)=\jmath_1(\alpha)\jmath_2(\alpha)-q\jmath_1(\gamma^*)\jmath_2(\gamma),\\
\DSUq(\gamma)=\jmath_1(\gamma)\jmath_2(\alpha)+\jmath_1(\alpha^*)\jmath_2(\gamma).
\end{split}
\end{equation}
The morphism $\DSUq$ defines the structure of a braided quantum group on $\SUq$, i.e.~it is coassociative:
\[
(\DSUq\btens_\zeta\id)\comp\DSUq=(\id\btens_\zeta\DSUq)\comp\DSUq
\]
and we have
\begin{equation}\label{density}
\begin{aligned}
&\bigl[\DSUq\bigl(\C(\SUq)\bigr)\jmath_2\bigl(\C(\SUq)\bigr)\bigr]\\
&\quad=\bigl[\jmath_1\bigl(\C(\SUq)\bigr)\DSUq\bigl(\C(\SUq)\bigr)\bigr]
=\C(\SUq)\btens_\zeta\C(\SUq).
\end{aligned}
\end{equation}

\section{The quotient sphere}\label{theQuotientS2}

Consider the \cst-algebra $\C(\TT)$ with trivial action of $\TT$. Then $\C(\TT)$ is an object of $\CstT$ and for any other $\sA\in\operatorname{Ob}(\CstT)$ we have $\sA\btens_\zeta\C(\TT)=\sA\tens\C(\TT)$ with $\jmath_1(a)=a\tens\I$ for all $a\in\sA$ and $\jmath_2(f)=\I\tens{f}$ for all $f\in\C(\TT)$ (cf.~Section \ref{braids}). Moreover it is easy to see that we have $\Delta_\TT\in\Mor_\TT(\C(\TT),\C(\TT)\btens_\zeta\C(\TT))$ and $\Delta_\TT(\bz)=\jmath_1(\bz)\jmath_2(\bz)$ or in other words $\Delta_\TT(\bz)=\bz\tens\bz$).

Consider further $\pi\in\Mor_\TT(\C(\SUq),\C(\TT))$ defined by
\[
\pi(\alpha)=\bz,\qquad\pi(\gamma)=0.
\]
\begin{proposition}
We have
\begin{equation}\label{equivar}
(\pi\btens_\zeta\pi)\comp\DSUq=\Delta_\TT\comp\pi.
\end{equation}
\end{proposition}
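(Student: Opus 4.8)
The plan is to verify the identity $(\pi \btens_\zeta \pi)\comp\DSUq = \Delta_\TT\comp\pi$ by checking it on the generators $\alpha$ and $\gamma$ of $\C(\SUq)$. Since all four maps involved are morphisms in $\CstT$ (hence unital $*$-homomorphisms) and $\alpha, \gamma$ generate $\C(\SUq)$ as a \cst-algebra, agreement on these two generators forces agreement everywhere. So the entire proof reduces to two direct computations.

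First I would compute the right-hand side. Since $\pi(\alpha)=\bz$ and $\pi(\gamma)=0$, we have $(\Delta_\TT\comp\pi)(\alpha)=\Delta_\TT(\bz)=\jmath_1(\bz)\jmath_2(\bz)$ and $(\Delta_\TT\comp\pi)(\gamma)=\Delta_\TT(0)=0$. Next I would compute the left-hand side using the defining formulas \eqref{DeltaSUq} together with the naturality of the braided tensor product on morphisms, namely $(\pi\btens_\zeta\pi)(\jmath_1(a)\jmath_2(b))=\jmath_1(\pi(a))\jmath_2(\pi(b))$. Applying this to $\DSUq(\alpha)=\jmath_1(\alpha)\jmath_2(\alpha)-q\jmath_1(\gamma^*)\jmath_2(\gamma)$ gives
\[
(\pi\btens_\zeta\pi)\bigl(\DSUq(\alpha)\bigr)
=\jmath_1(\pi(\alpha))\jmath_2(\pi(\alpha))-q\,\jmath_1(\pi(\gamma)^*)\jmath_2(\pi(\gamma))
=\jmath_1(\bz)\jmath_2(\bz),
\]
where the second term vanishes because $\pi(\gamma)=0$. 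Likewise, from $\DSUq(\gamma)=\jmath_1(\gamma)\jmath_2(\alpha)+\jmath_1(\alpha^*)\jmath_2(\gamma)$ one gets $(\pi\btens_\zeta\pi)(\DSUq(\gamma))=\jmath_1(\pi(\gamma))\jmath_2(\pi(\alpha))+\jmath_1(\pi(\alpha)^*)\jmath_2(\pi(\gamma))=0$, again since $\pi(\gamma)=0$. Thus both sides agree on $\alpha$ (both equal $\jmath_1(\bz)\jmath_2(\bz)$) and on $\gamma$ (both equal $0$).

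There is essentially no hard obstacle here; the one point requiring a moment of care is confirming that the naturality formula for $\pi\btens_\zeta\pi$ applies, which in turn requires that $\pi\in\Mor_\TT(\C(\SUq),\C(\TT))$, i.e.\ that $\pi$ is genuinely $\TT$-equivariant. This is immediate from the degree assignments: $\alpha$ is homogeneous of degree $0$ and $\gamma$ of degree $1$, and $\pi(\alpha)=\bz$, $\pi(\gamma)=0$ respect these (the target $\C(\TT)$ carries the trivial action, so every element has degree $0$, and indeed $\pi$ sends the degree-$1$ element $\gamma$ to $0$). Once equivariance is noted, the computation above closes the proof, so I would simply state the two generator evaluations and invoke that morphisms agreeing on generators of a \cst-algebra coincide.
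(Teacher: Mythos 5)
Your proposal is correct and follows essentially the same route as the paper: both verify the identity on the generators $\alpha$ and $\gamma$ using the formula $(\pi\btens_\zeta\pi)(\jmath_1(a)\jmath_2(b))=\jmath_1(\pi(a))\jmath_2(\pi(b))$ and the fact that $\pi(\gamma)=0$ kills the extra terms. Your added remark on checking the $\TT$-equivariance of $\pi$ is a sensible point of care that the paper simply absorbs into the definition of $\pi$ as an element of $\Mor_\TT(\C(\SUq),\C(\TT))$.
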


\begin{proof}
It is a simple computation to check \eqref{equivar} on generators of $\C(\SUq)$. Indeed:
\begin{align*}
(\pi\btens_\zeta\pi)\DSUq(\alpha)
&=(\pi\btens_\zeta\pi)\bigl(\jmath_1(\alpha)\jmath_2(\alpha)-q\jmath_1(\gamma^*)\jmath_2(\gamma)\bigr)\\
&=\jmath_1\bigl(\pi(\alpha)\bigr)\jmath_2\bigl(\pi(\alpha)\bigr)-q\jmath_1\bigl(\pi(\gamma^*)\bigr))\jmath_2\bigl(\pi(\gamma)\bigr)\\
&=\jmath_1(\bz)\jmath_2(\bz)=\Delta_\TT(\bz)=\Delta_\TT\bigl(\pi(\alpha)\bigr)
\end{align*}
and similarly
\begin{align*}
(\pi\btens_\zeta\pi)\DSUq(\alpha)
&=(\pi\btens_\zeta\pi)
\bigl(\jmath_1(\gamma)\jmath_2(\alpha)+\jmath_1(\alpha^*)\jmath_2(\gamma)\bigr)\\
&=\jmath_1\bigl(\pi(\gamma)\bigr)\jmath_2\bigl(\pi(\alpha)\bigr)+\jmath_1\bigl(\pi(\alpha^*)\bigr)\jmath_2\bigl(\pi(\gamma)\bigr)\\
&=0=\Delta_\TT\bigl(\pi(\gamma)\bigr).
\end{align*}
\end{proof}

It follows that $\TT$ is a (closed braided quantum) subgroup of $\SUq$. Following the standard procedure (\cite{spheres,podles,wangPhD}) let us define the algebra of functions on the homogeneous space $\SUq/\TT$:
\[
\C(\SS^2_q)=\bigl\{a\in\C(\SUq)\,\bigl.\bigr|\,(\id\btens_\zeta\pi)\DSUq(a)=a\tens\I\bigr\}
\]
(note that we can write the condition $(\id\btens_\zeta\pi)\DSUq(a)=a\tens\I$ because $(\id\btens_\zeta\pi)\DSUq(a)$ belongs to $\C(\SUq)\btens_\zeta\C(\TT)=\C(\SUq)\tens\C(\TT)$).

\begin{proposition}\label{sigma}
The map $\sigma=(\id\btens_\zeta\pi)\comp\DSUq\in\Mor_\TT(\C(\SUq),\C(\SUq)\tens\C(\TT))$ is a right action of $\TT$ on $\SUq$.
\end{proposition}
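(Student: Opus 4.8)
The plan is to verify the two defining properties of a (right) action of $\TT$ on $\SUq$: the coaction identity $(\sigma\tens\id)\comp\sigma=(\id\tens\Delta_\TT)\comp\sigma$ and the Podle\'{s} density condition $\bigl[\sigma\bigl(\C(\SUq)\bigr)\bigl(\I\tens\C(\TT)\bigr)\bigr]=\C(\SUq)\tens\C(\TT)$. Throughout I would use that $\C(\TT)$ carries the trivial action of $\TT$, so that every braided product $\btens_\zeta$ in which a factor equal to $\C(\TT)$ appears collapses to an ordinary minimal tensor product (Section \ref{braids}); in particular $\id\tens\Delta_\TT=\id\btens_\zeta\Delta_\TT$ and $\sigma\tens\id=\sigma\btens_\zeta\id$ on $\C(\SUq)\btens_\zeta\C(\TT)=\C(\SUq)\tens\C(\TT)$.

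For the coaction identity I would show that both sides equal $(\sigma\btens_\zeta\pi)\comp\DSUq$. On the left, bifunctoriality of $\btens_\zeta$ gives $(\sigma\btens_\zeta\id)\comp(\id\btens_\zeta\pi)=\sigma\btens_\zeta\pi$, whence $(\sigma\tens\id)\comp\sigma=(\sigma\btens_\zeta\pi)\comp\DSUq$. On the right, the equivariance relation \eqref{equivar} rewrites $\Delta_\TT\comp\pi$ as $(\pi\btens_\zeta\pi)\comp\DSUq$, so that $(\id\tens\Delta_\TT)\comp\sigma=(\id\btens_\zeta\pi\btens_\zeta\pi)\comp(\id\btens_\zeta\DSUq)\comp\DSUq$; coassociativity of $\DSUq$ turns $(\id\btens_\zeta\DSUq)\comp\DSUq$ into $(\DSUq\btens_\zeta\id)\comp\DSUq$, and one more application of bifunctoriality collapses $(\id\btens_\zeta\pi\btens_\zeta\pi)\comp(\DSUq\btens_\zeta\id)$ to $\sigma\btens_\zeta\pi$. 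Hence the right-hand side also equals $(\sigma\btens_\zeta\pi)\comp\DSUq$, and the two coincide.

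For the density condition I would apply the morphism $\id\btens_\zeta\pi$ to the relation $\bigl[\DSUq\bigl(\C(\SUq)\bigr)\jmath_2\bigl(\C(\SUq)\bigr)\bigr]=\C(\SUq)\btens_\zeta\C(\SUq)$ from \eqref{density}. Since $\pi$ is surjective onto $\C(\TT)$, the morphism $\id\btens_\zeta\pi$ is surjective onto $\C(\SUq)\btens_\zeta\C(\TT)=\C(\SUq)\tens\C(\TT)$, and it sends $\DSUq(a)$ to $\sigma(a)$ and $\jmath_2(b)$ to $\I\tens\pi(b)$, with $\pi\bigl(\C(\SUq)\bigr)=\C(\TT)$. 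Because the left-hand side of the chosen relation is the entire algebra $\C(\SUq)\btens_\zeta\C(\SUq)$, its image under the surjection is all of $\C(\SUq)\tens\C(\TT)$; on the other hand continuity of $\id\btens_\zeta\pi$ places this image inside $\bigl[\sigma\bigl(\C(\SUq)\bigr)\bigl(\I\tens\C(\TT)\bigr)\bigr]$, giving the inclusion reverse to the obvious one and hence equality.

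The routine verifications are the collapses of braided into ordinary tensor products and the bookkeeping of which leg each functorial factor acts on; the only genuine input is the interplay of coassociativity of $\DSUq$ with the equivariance \eqref{equivar}. The main point to watch is that the two triple products appearing on the two sides of the coaction identity are identified correctly, i.e.\ that the collapse $\C(\SUq)\btens_\zeta\C(\TT)\btens_\zeta\C(\TT)=\C(\SUq)\tens\C(\TT)\tens\C(\TT)$ is compatible with $\Delta_\TT$ acting on the last leg on one side and with the two copies of $\pi$ on the other.
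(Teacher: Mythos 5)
Your proof is correct, but it takes a genuinely different route from the paper's. The paper simply evaluates $\sigma$ on the generators, finding $\sigma(\alpha)=\alpha\tens\bz$ and $\sigma(\gamma)=\gamma\tens\bz$, and reads off the coaction identity at once (the action is just the pointwise scaling action of $\TT$, and this explicit formula is what gets used later to identify $\C(\SS^2_q)$ as the fixed-point algebra in Corollary \ref{genXq}). You instead argue purely formally: both sides of the coaction identity are shown to equal $(\sigma\btens_\zeta\pi)\comp\DSUq$ using bifunctoriality of $\btens_\zeta$, the intertwining relation \eqref{equivar} and coassociativity of $\DSUq$, and the density condition is obtained by pushing \eqref{density} forward along the surjection $\id\btens_\zeta\pi$. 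Your argument has the merit of working verbatim for any closed braided quantum subgroup given by a coproduct-intertwining surjection, and it explicitly establishes the Podle\'s density condition, which the paper leaves implicit (it is automatic here since the explicit formulas exhibit $\sigma$ as a genuine continuous action of the group $\TT$ by automorphisms). The paper's computation is shorter and, more importantly for the rest of the article, produces the concrete form of $\sigma$ needed downstream. The one point your argument leans on silently is the strict associativity of $\btens_\zeta$ and the compatibility of the collapses $\C(\SUq)\btens_\zeta\C(\TT)\btens_\zeta\C(\TT)\cong\C(\SUq)\tens\C(\TT)\tens\C(\TT)$ on the two sides; you flag this yourself, and it is consistent with the conventions the paper already uses when stating coassociativity, so it is not a gap.
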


\begin{proof}
On generators of $\C(\SUq)$ we have
\begin{align*}
(\id\btens_\zeta\pi)\DSUq(\alpha)
&=(\id\btens_\zeta\pi)\bigl(\jmath_1(\alpha)\jmath_2(\alpha)-q\jmath_1(\gamma^*)\jmath_2(\gamma)\bigr)=\jmath_1(\alpha)\jmath_2(\bz)
=\alpha\tens\bz,\\
(\id\btens_\zeta\pi)\DSUq(\gamma)
&=(\id\btens_\zeta\pi)\bigl(\jmath_1(\gamma)\jmath_2(\alpha)+\jmath_1(\alpha^*)\jmath_2(\gamma)\bigr)=\jmath_1(\gamma)\jmath_2(\bz)=\gamma\tens\bz
\end{align*}
and from this it follows immediately that $(\sigma\tens\id)\comp\sigma=(\id\tens\Delta_\TT)\comp\sigma$.
\end{proof}

As a consequence $\C(\SS^2_q)$ can be identified with the fixed point subalgebra $\C(\SUq)^\sigma$ of $\C(\SUq)$ for the right action $\sigma$. Note that since $\sigma$ is equivariant (for the left actions of $\TT$), the subalgebra $\C(\SS^2_q)\subset\C(\SUq)$ is invariant under the action $\rho^{\C(\SUq)}$. In particular it is an object of $\CstT$ with $\rho^{\C(\SS^2_q)}$ equal to the restriction of $\rho^{\C(\SUq)}$ to $\C(\SS^2_q)$.

\begin{corollary}\label{genXq}
$\C(\SS^2_q)$ is the closed unital $*$-subalgebra of $\C(\SUq)$ generated by $\alpha\gamma^*$ and $\gamma^*\gamma$.
\end{corollary}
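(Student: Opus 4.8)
The plan is to combine the explicit form of the action $\sigma$ computed in the proof of Proposition~\ref{sigma} with the spanning family $\{a_{n,k,l}\}$ of Lemma~\ref{baza}. First I would record that $\sigma(\alpha)=\alpha\tens\bz$ and $\sigma(\gamma)=\gamma\tens\bz$, whence $\sigma(\alpha^*)=\alpha^*\tens\bz^{-1}$ and $\sigma(\gamma^*)=\gamma^*\tens\bz^{-1}$ (using $\bz^*=\bz^{-1}$). Since $\sigma$ is a homomorphism this gives, for every $n\in\ZZ$ and $k,l\in\ZZ_+$,
\[
\sigma(a_{n,k,l})=a_{n,k,l}\tens\bz^{\,n+k-l},
\]
so $a_{n,k,l}$ is $\sigma$-invariant exactly when $l=n+k$. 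Writing $\sigma_z=(\id\tens\delta_z)\comp\sigma$ for $z\in\TT$, the averaging map $E(a)=\int_\TT\sigma_z(a)\,dz$ (normalized Haar measure) is a contractive projection of $\C(\SUq)$ onto the fixed-point algebra $\C(\SUq)^\sigma=\C(\SS^2_q)$ which sends $a_{n,k,l}$ to itself if $l=n+k$ and to $0$ otherwise. As $\mathscr{A}$ is dense and $E$ is continuous, $E(\mathscr{A})=\operatorname{span}\{a_{n,k,n+k}\}$ is dense in $\C(\SS^2_q)$, so it suffices to handle these particular elements.

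Let $\mathscr{B}$ denote the closed unital $*$-subalgebra generated by $\alpha\gamma^*$ and $\gamma^*\gamma$, and put $x=\alpha\gamma^*$, $y=\gamma^*\gamma$. The inclusion $\mathscr{B}\subseteq\C(\SS^2_q)$ is immediate, since $x$ and $y$ both have $\sigma$-degree $1-1=0$ and hence are $\sigma$-invariant by the displayed formula. For the reverse inclusion I would show $a_{n,k,n+k}\in\mathscr{B}$ for all admissible $n,k$.

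The key point is that, as long as $\alpha$ is never multiplied against $\alpha^*$, the only relations among $\alpha,\gamma,\gamma^*$ --- namely $\alpha\gamma=\overline{q}\gamma\alpha$, the relation $\alpha\gamma^*=q\gamma^*\alpha$ from \eqref{add}, and the normality $\gamma\gamma^*=\gamma^*\gamma$ --- are all of scalar ($q$-commutation) type. Hence reordering the factors of any monomial built from one power of $\alpha$ (or of $\alpha^*$) together with powers of $\gamma$ and $\gamma^*$ only multiplies it by a nonzero scalar. Using $(\alpha\gamma^*)^n\propto\alpha^n{\gamma^*}^n$ and $y^k=\gamma^k{\gamma^*}^k$ (by normality), for $n\geq0$ this identifies
\[
x^n y^k\ \propto\ \alpha^n{\gamma^*}^n\gamma^k{\gamma^*}^k=\alpha^n\gamma^k{\gamma^*}^{n+k}=a_{n,k,n+k},
\]
while for $n<0$, setting $m=-n$ (so $k\geq m$ because $n+k\geq0$) and using $x^*=\gamma\alpha^*$ with $\gamma\alpha^*=\overline{q}\alpha^*\gamma$, one gets $(x^*)^m y^{k-m}\propto a_{n,k,n+k}$. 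In either case $a_{n,k,n+k}\in\mathscr{B}$, so $\C(\SS^2_q)\subseteq\mathscr{B}$; combined with the density above and the first inclusion this yields $\C(\SS^2_q)=\mathscr{B}$.

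I expect the only real work to be the bookkeeping in the last step: one must verify that the scalar commutation relations genuinely suffice to bring each $\sigma$-invariant monomial into the normal form $x^n y^k$ (respectively $(x^*)^m y^{k-m}$) without ever having to commute $\alpha$ past $\alpha^*$ --- which is precisely why the ordering in Lemma~\ref{baza}, separating the cases $n\geq0$ and $n<0$, is the natural one to use.
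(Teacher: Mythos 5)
Your proposal is correct and follows essentially the same route as the paper: both directions are handled by checking $\sigma$-invariance of the generators, averaging over $\TT$ to project onto $\C(\SUq)^\sigma$, identifying the invariant spanning elements $a_{n,k,l}$ as those with $l=n+k$, and rewriting each such element (up to a nonzero power of $q$) as $(\alpha\gamma^*)^n(\gamma^*\gamma)^k$ or $(\gamma\alpha^*)^{-n}(\gamma^*\gamma)^{n+k}$ via the $q$-commutation relations. The paper even records the explicit scalar factors $q^{\frac{(l-k)(l-k-1)}{2}}$ and $\overline{q}^{-\frac{(k-l)(k-l+1)}{2}}$ that your ``$\propto$'' suppresses, but this is only bookkeeping.
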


\begin{proof}
Denote by $\sB$ the closed unital $*$-subalgebra of $\C(\SUq)$ generated by $\alpha\gamma^*$ and $\gamma^*\gamma$. Since both of its generators are invariant for $\sigma$, we see that $\sB\subset\C(\SUq)^\sigma$.

Conversely, let us take any $x\in\C(\SUq)^\sigma$. As any element of $\C(\SUq)$ it can be written as a limit
\[
x=\lim_{n\to\infty}p_n
\]
of elements of $\mathscr{A}$ (cf.~Lemma \ref{baza}), i.e.~each $p_n$ is a linear combination elements of the system $\{a_{n,k,l}\}$. Now let $E:\C(\SUq)\to\C(\SUq)^\sigma$ be the projection onto fixed points given by integration with respect to the normalized Haar measure on $\TT$. Obviously we have
\[
x=E(x)=\lim_{n\to\infty}E(p_n).
\]
Moreover it is easy to see that
\[
E(a_{n,k,l})=\begin{cases}a_{n,k,l}&n=l-k\\0&n\neq{l-k}.\end{cases}
\]
In other words $x$ is a limit of linear combinations of elements of the set
\[
\bigl\{a_{l-k,k,l}\,\bigl.\bigr|\,l,k\in\ZZ_+\bigr\}.
\]
But each of these elements belongs to $\sB$, as each of the elements
\[
a_{l-k,k,l}=
\begin{cases}
q^{\frac{(l-k)(l-k-1)}{2}}(\alpha\gamma^*)^{l-k}(\gamma^*\gamma)^k&l\geq{k},\\
\overline{q}^{-\frac{(k-l)(k-l+1)}{2}}(\gamma\alpha^*)^{k-l}(\gamma^*\gamma)^l&l<k
\end{cases}
\]
belongs to $\sB$.
\end{proof}

Renaming the generators of $\C(\SS^2_q)$ as follows:
\[
A=\gamma^*\gamma,\qquad{B}=\alpha\gamma^*
\]
we easily find that
\begin{equation}\label{podlesRel}
\begin{aligned}
B^*B&=A^2-A^4,&BA&=|q|^2AB,\\
BB^*&=|q|^2A^2-|q|^4A^4,&A^*&=A.
\end{aligned}
\end{equation}

These are the relations defining the so called ``standard Podle\'{s} sphere'' (see \cite[Section 2.5]{ldgeometry}) for the deformation parameter $|q|$, i.e.~we have $\SS^2_q=S^2_{|q|}$ (in the notation of \cite[Section 6]{spheres}). Therefore
\begin{itemize}
\item $\C(\SS^2_q)$ defined as a \cst-subalgebra of $\C(\SUq)$ is in fact the universal unital \cst-algebra generated by elements $A$ and $B$ with relations \eqref{podlesRel},
\item $\C(\SS^2_q)$ is isomorphic to the minimal unitization of the compacts
\end{itemize}
(\cite{ldgeometry,spheres,podles}).

\begin{theorem}\label{quotientsphere}
Let $\Gamma=\bigl.\Delta\bigr|_{\C(\SS^2_q)}$. Then $\Gamma\in\Mor_\TT(\C(\SS^2_q),\C(\SUq)\btens_\zeta\C(\SS^2_q))$ and
\begin{enumerate}
\item\label{coass} $(\id\btens_\zeta\Gamma)\comp\Gamma=(\DSUq\btens_\zeta\id)\comp\Gamma$,
\item\label{podCond} $\bigl[\jmath_1\bigl(\C(\SUq)\bigr)\Gamma\bigl(\C(\SS^2_q)\bigr)\bigr]=\C(\SUq)\btens_\zeta\C(\SS^2_q)$,
\item for $x\in\C(\SS^2_q)$ we have $\Gamma(x)=\jmath_2(x)$ if and only if $x\in\CC\I$.
\end{enumerate}
\end{theorem}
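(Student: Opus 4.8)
The plan is to route everything through the right action $\sigma$ and the conditional expectation onto its fixed points. For $z\in\TT$ set $\sigma_z=(\id\tens\delta_z)\comp\sigma\in\operatorname{Aut}(\C(\SUq))$; the computation in the proof of Proposition \ref{sigma} gives $\sigma_z(\alpha)=z\alpha$ and $\sigma_z(\gamma)=z\gamma$, so each $\sigma_z$ is a $\TT$-equivariant automorphism and $E=\int_\TT\sigma_z\,dz$ is a conditional expectation of $\C(\SUq)$ onto $\C(\SUq)^\sigma=\C(\SS^2_q)$. A direct check on the generators $\alpha,\gamma$ (or, equivalently, coassociativity of $\DSUq$) yields the intertwining relation $(\id\btens_\zeta\sigma_z)\comp\DSUq=\DSUq\comp\sigma_z$. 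Since each $\id\btens_\zeta\sigma_z$ is an automorphism of $\C(\SUq)\btens_\zeta\C(\SUq)$, the averaged map $E_2=\int_\TT(\id\btens_\zeta\sigma_z)\,dz$ is a conditional expectation, and from $E_2\bigl(\jmath_1(a)\jmath_2(b)\bigr)=\jmath_1(a)\jmath_2(E(b))$ together with Remark \ref{subalg} its range is exactly $\C(\SUq)\btens_\zeta\C(\SS^2_q)$. Averaging the intertwining relation gives $E_2\comp\DSUq=\DSUq\comp E$.

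Well-definedness of $\Gamma$ is then immediate: for $x\in\C(\SS^2_q)$ one has $E_2(\DSUq(x))=\DSUq(E(x))=\DSUq(x)$, so $\DSUq(x)$ lies in the range $\C(\SUq)\btens_\zeta\C(\SS^2_q)$ of $E_2$, and the $\TT$-equivariance of $\Gamma$ is inherited from that of $\DSUq$. Property \ref{coass} then requires no new work: since $\Gamma$ is the restriction of $\DSUq$ and, by Remark \ref{subalg} and functoriality of $\btens_\zeta$, the map $\id\btens_\zeta\Gamma$ is the corresponding restriction of $\id\btens_\zeta\DSUq$, both sides of \ref{coass} are the restriction to $\C(\SS^2_q)$ of the two equal sides of the coassociativity of $\DSUq$.

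For the density condition \ref{podCond} I would apply $E_2$ to \eqref{density}. Each $\jmath_1(a)$ is fixed by every $\id\btens_\zeta\sigma_z$, so $E_2$ is a left $\jmath_1(\C(\SUq))$-module map and $E_2\bigl(\jmath_1(a)\DSUq(b)\bigr)=\jmath_1(a)\DSUq(E(b))=\jmath_1(a)\Gamma(E(b))$. As $E$ maps $\C(\SUq)$ onto $\C(\SS^2_q)$, continuity of $E_2$ gives
\[
\C(\SUq)\btens_\zeta\C(\SS^2_q)=E_2\bigl(\C(\SUq)\btens_\zeta\C(\SUq)\bigr)=E_2\bigl([\jmath_1(\C(\SUq))\DSUq(\C(\SUq))]\bigr)\subseteq\bigl[\jmath_1(\C(\SUq))\Gamma(\C(\SS^2_q))\bigr],
\]
while the reverse inclusion holds because $\jmath_1(\C(\SUq))$ and $\Gamma(\C(\SS^2_q))$ already lie in the subalgebra $\C(\SUq)\btens_\zeta\C(\SS^2_q)$.

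The third statement is the one part needing no expectation machinery. Its converse is trivial, as $\DSUq(\I)=\I=\jmath_2(\I)$. For the forward implication, given $x\in\C(\SS^2_q)$ with $\DSUq(x)=\jmath_2(x)$ I would apply $\id\btens_\zeta\pi$: the left-hand side becomes $(\id\btens_\zeta\pi)\DSUq(x)=\sigma(x)=x\tens\I$ because $x$ is $\sigma$-invariant, while the right-hand side becomes $\jmath_2(\pi(x))=\I\tens\pi(x)$ in $\C(\SUq)\tens\C(\TT)$; slicing both sides through any state on $\C(\TT)$ forces $x\in\CC\I$. The main obstacle is entirely in the first paragraph: verifying rigorously, inside the braided product rather than an ordinary minimal tensor product, that $E_2$ is a well-defined conditional expectation onto $\C(\SUq)\btens_\zeta\C(\SS^2_q)$ intertwining $\DSUq$. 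Once that is secured, \ref{coass}, \ref{podCond} and the scalar characterization all follow quickly.
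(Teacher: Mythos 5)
Your argument is correct and follows the same overall architecture as the paper's proof: well-definedness of $\Gamma$ via the conditional expectation onto the $\sigma$-fixed points, statement (1) by restriction of coassociativity, and statement (2) by pushing the density condition \eqref{density} through the expectation on the second leg. There are two local differences worth noting. First, you build $E_2=\int_\TT(\id\btens_\zeta\sigma_z)\,dz$ by averaging the automorphisms $\id\btens_\zeta\sigma_z$ (each defined by ordinary functoriality of $\btens_\zeta$ for equivariant morphisms), whereas the paper forms $\id\btens_\zeta E$ directly by invoking the functoriality of $\btens_\zeta$ for equivariant completely positive maps; your route is slightly more self-contained, at the cost of having to verify point-norm continuity of $z\mapsto\id\btens_\zeta\sigma_z$ (routine by equicontinuity and density), and it gives you the left $\jmath_1(\C(\SUq))$-module property of $E_2$ for free, which the paper instead checks by hand on the dense subalgebra $\mathscr{A}$. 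Second, for statement (3) the paper uses the counit: from $(\id\btens_\zeta\eps)\comp\DSUq=\id$ and $\Gamma(x)=\jmath_2(x)$ it gets $x=\eps(x)\I$ directly, while you instead apply $\id\btens_\zeta\pi$ and use $\sigma$-invariance of $x$ to obtain $x\tens\I=\I\tens\pi(x)$ and then slice; both are one-line arguments, but yours avoids stating and justifying the counit identity. The remaining steps, including the identity $E_2\comp\DSUq=\DSUq\comp E$ (which the paper derives from coassociativity and you obtain by averaging the pointwise intertwining relation $(\id\btens_\zeta\sigma_z)\comp\DSUq=\DSUq\comp\sigma_z$), match the paper's reasoning.
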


\begin{proof}
The braided product $\C(\SUq)\btens_\zeta\C(\SS^2_q)$ can be naturally identified with the subalgebra of $\C(\SUq)\btens_\zeta\C(\SUq)$ generated by $\jmath_1(\C(\SUq))$ and $\jmath_2(\C(\SS^2_q))$ (cf.~Remark \ref{subalg}). Furthermore $\DSUq(\C(\SS^2_q))\subset\bigl[\jmath_1(\C(\SUq))\jmath_2(\C(\SS^2_q))\bigr]$ which can be checked either by using the fact that $(\DSUq\tens\id)\comp\sigma=(\id\btens_\zeta\sigma)\comp\DSUq$, or by checking values of $\Gamma$ on generators of $\C(\SS^2_q)$. Since all maps involved are $\TT$-equivariant unital $*$-homomorphisms of unital \cst-algebras, this proves that $\Gamma\in\Mor_\TT(\C(\SS^2_q),\C(\SUq)\btens_\zeta\C(\SS^2_q))$.

Statement \eqref{coass} follows immediately form coassociativity of $\DSUq$ and the fact that the range of the map $\DSUq$ restricted to $\C(\SS^2_q)$ lies in $\C(\SUq)\btens_\zeta\C(\SS^2_q)$.

The proof of Statement \eqref{podCond} follows the lines of analogous statements for non-braided quantum groups (see e.g.~\cite[Section 5]{exNonCpt}). Since braided products (in the sense of Section \ref{braids})  are not as familiar as ordinary tensor products, we will spell out all the details.

First let us recall from \cite[Section 5.2]{MRWtwist} that $\btens_\zeta$ is functorial not only for equivariant morphisms, but also for equivariant completely positive maps. Now since $\sigma$ (introduced in Proposition \ref{sigma}) is equivariant:
\[
(\id\tens\sigma)\comp\rho^{\C(\SUq)}=(\rho^{\C(\SUq)}\tens\id)\comp\sigma
\]
applying integration with respect to the normalized Haar measure on $\TT$ to the third leg we obtain
\[
(\id\tens{E})\comp\rho^{\C(\SUq)}=\rho^{\C(\SS^2_q)}\comp{E}
\]
where, as in the proof of Corollary \ref{genXq}, $E$ is the projection (in fact a conditional expectation) onto $\C(\SS^2_q)$.

It follows that we can form the map $\id\btens_\zeta{E}:\C(\SUq)\btens_\zeta\C(\SUq)\to\C(\SUq)\btens_\zeta\C(\SS^2_q)$ and it is easy to see that it is surjective. Thus, by the density conditions \eqref{density} we get
\begin{equation}\label{dwiestrony}
(\id\btens_\zeta{E})\bigl[\jmath_1\bigl(\C(\SUq)\bigr)\DSUq\bigl(\C(\SUq)\bigr)\bigr]
=\C(\SUq)\btens_\zeta\C(\SS^2_q).
\end{equation}
To see that the left hand of this equality is in fact $\bigl[\jmath_1(\C(\SUq))\Gamma(\C(\SS^2_q))\bigr]$ note first that
\[
\bigl[\jmath_1\bigl(\C(\SUq)\bigr)\DSUq\bigl(\C(\SUq)\bigr)\bigr]
\]
is equal to the closure of
\[
\operatorname{span}\bigl\{\jmath_1(a)\DSUq(b)\,\bigl.\bigr|\,a\in\C(\SUq),\:b\in\mathscr{A}\bigr\},
\]
where $\mathscr{A}$ is the dense subspace of $\C(\SUq)$ introduced in Lemma \ref{baza}. Now for any $b\in\mathscr{A}$ there are $b_1',\dotsc,b_N',b_2'',\dotsc,b_N''\in\mathscr{A}$ such that
\[
\DSUq(b)=\sum_{i}\jmath_1(b_i')\jmath_2(b_i''),
\]
so
{\allowdisplaybreaks
\begin{align*}
(\id\btens_\zeta{E})\bigl(\jmath_1(a)\DSUq(b)\bigr)
&=(\id\btens_\zeta{E})\biggl(\sum_{i}\jmath_1(ab_i')\jmath_2(b_i'')\biggr)\\
&=\sum_{i}\jmath_1(ab_i')\jmath_2\bigl(E(b_i'')\bigr)\\
&=\jmath_1(a)\biggl(\sum_{i}\jmath_1(b_i')\jmath_2\bigl(E(b_i'')\bigr)\biggr)\\
&=\jmath_1(a)(\id\btens_\zeta{E})\biggl(\sum_{i}\jmath_1(b_i')\jmath_2(b_i'')\biggr)\\
&=\jmath_1(a)(\id\btens_\zeta{E})\bigl(\DSUq(b)\bigr).
\end{align*}
} % \allowdisplaybreaks

From coassociativity of $\DSUq$ we immediately infer that (denoting by $\int_\TT$ integration over $\TT$) we have
\begin{align*}
(\id\btens_\zeta{E})\comp\DSUq
&=\bigl(\id\btens_\zeta\id\btens_\zeta(\textstyle{\int_\TT}\comp\pi)\bigr)\comp(\id\btens_\zeta\DSUq)\comp\DSUq\\
&=\bigl(\id\btens_\zeta\id\btens_\zeta(\textstyle{\int_\TT}\comp\pi)\bigr)\comp(\DSUq\btens_\zeta\id)\comp\DSUq\\
&=\DSUq\comp\bigl(\id\btens_\zeta(\textstyle{\int_\TT}\comp\pi)\bigr)\comp\DSUq\\&=\DSUq\comp{E}=\Gamma\comp{E}.
\end{align*}
Therefore for $a\in\C(\SUq)$ and $b\in\mathscr{A}$
\[
(\id\btens_\zeta{E})\bigl(\jmath_1(a)\DSUq(b)\bigr)
=\jmath_1(a)(\id\btens_\zeta{E})\bigl(\DSUq(b)\bigr)
=\jmath_1(a)\Gamma\bigl(E(b)\bigr).
\]
Consequently the left hand side of \eqref{dwiestrony} is equal to the closure of
\[
\operatorname{span}\bigl\{\jmath_1(a)\Gamma\bigl(E(b)\bigr)\,\bigl.\bigr|\,a\in\C(\SUq),\:b\in\mathscr{A}\bigr\},
\]
which is $\bigl[\jmath_1\bigl(\C(\SUq)\bigr)\Gamma\bigl(\C(\SS^2_q)\bigr)\bigr]$.

Finally note that there exists a character $\eps$ of $\C(\SUq)$ mapping $\alpha$ to $1$ and $\gamma$ to $0$ (this is $\pi$ composed with evaluation in $1\in\TT$). Obviously $\eps\in\Mor_\TT(\C(\SUq),\CC)$ and
\[
(\id\btens_\zeta\eps)\comp\DSUq=(\eps\btens_\zeta\id)\comp\DSUq=\id
\]
(where we identify $\jmath_1:\C(\SUq)\to\C(\SUq)\btens_\zeta\CC$ as well as $\jmath_2:\C(\SUq)\to\CC\btens_\zeta\C(\SUq)$ with $\id:\C(\SUq)\to\C(\SUq)$). It follows that if $x\in\C(\SS^2_q)$ satisfies $\Gamma(x)=\jmath_2(x)$ then
\begin{align*}
x=(\id\btens_\zeta\eps)\bigl(\DSUq(x)\bigr)
&=(\id\btens_\zeta\eps)\bigl(\Gamma(x)\bigr)\\
&=(\id\btens_\zeta\eps)\bigl(\jmath_2(x)\bigr)\\
&=(\id\btens_\zeta\eps)\bigl(\jmath_1(\I)\jmath_2(x)\bigr)\\
&=\jmath_1(\I)\jmath_2\bigl(\eps(x)\bigr)=\eps(x)\I.
\end{align*}
\end{proof}

\section{The three-dimensional irreducible representation}\label{theV}

We begin by constructing the tensor square of the fundamental representation of the braided quantum $\operatorname{SU}(2)$ group. The first step is to choose an identification of $M_2\btens_\zeta{M_2}$ with $M_2\tens{M_2}$. Recall that the action of $\TT$ on $M_2$ we are considering is given by $\rho^{M_2}:M_2\to\C(\TT)\tens{M_2}$ determined uniquely by
\[
\rho^{M_2}(\bn)=\bz\tens\bn,
\]
where
\[
\bn=\begin{bmatrix}0&1\\0&0\end{bmatrix}
\]
is a convenient generator of $M_2$. This action arises from the action of $\TT$ on $\CC^2$ via the representation
\[
\TT\ni{z}\longmapsto\begin{bmatrix}z&0\\0&1\end{bmatrix}\in\B(\CC^2).
\]

\subsection{Identification $M_2\btens_\zeta{M_2}\cong{M_2}\tens{M_2}$}

For this we follow \cite[Proof of Theorem 5.4]{KMRW}, where the isomorphism $M_2\btens_\zeta{M_2}\cong{M_2}\tens{M_2}$ is explicitly constructed so that the canonical maps $\jmath_1,\jmath_2:M_2\to{M_2}\tens{M_2}$ are determined uniquely by the formula
\[
\jmath_1(S)\jmath_2(T)(\xi\tens\eta)=\overline{\zeta}^{\deg(T)\deg(\xi)}\jmath(S)\xi\tens{T}\eta
\]
valid for all $S\in{M_2}$, $\eta\in\CC^2$ and all homogeneous $T\in{M_2}$ and $\xi\in\CC^2$. It follows that $\jmath_1(S)=S\tens\I_2$ for all $S\in{M_2}$, while
\begin{align*}
\jmath_2(T)\left(\bigl[\begin{smallmatrix}1\\0\end{smallmatrix}\bigr]\tens\bigl[\begin{smallmatrix}1\\0\end{smallmatrix}\bigr]\right)&=\overline{\zeta}^{\deg(T)}\bigl[\begin{smallmatrix}1\\0\end{smallmatrix}\bigr]\tens{T}\bigl[\begin{smallmatrix}1\\0\end{smallmatrix}\bigr],\\
\jmath_2(T)\left(\bigl[\begin{smallmatrix}1\\0\end{smallmatrix}\bigr]\tens\bigl[\begin{smallmatrix}0\\1\end{smallmatrix}\bigr]\right)&=\overline{\zeta}^{\deg(T)}\bigl[\begin{smallmatrix}1\\0\end{smallmatrix}\bigr]\tens{T}\bigl[\begin{smallmatrix}0\\1\end{smallmatrix}\bigr],\\
\jmath_2(T)\left(\bigl[\begin{smallmatrix}0\\1\end{smallmatrix}\bigr]\tens\bigl[\begin{smallmatrix}1\\0\end{smallmatrix}\bigr]\right)&=\bigl[\begin{smallmatrix}0\\1\end{smallmatrix}\bigr]\tens{T}\bigl[\begin{smallmatrix}1\\0\end{smallmatrix}\bigr],\\
\jmath_2(T)\left(\bigl[\begin{smallmatrix}0\\1\end{smallmatrix}\bigr]\tens\bigl[\begin{smallmatrix}0\\1\end{smallmatrix}\bigr]\right)&=\bigl[\begin{smallmatrix}0\\1\end{smallmatrix}\bigr]\tens{T}\bigl[\begin{smallmatrix}0\\1\end{smallmatrix}\bigr].
\end{align*}
In other words $\jmath_2$ is determined uniquely by
\[
\jmath_2(\bn)=\begin{bmatrix}\overline{\zeta}&0\\0&1\end{bmatrix}\tens\bn.
\]

\subsection{The tensor square of the fundamental representation}

Using the above-mentioned identification $M_2\btens_\zeta{M_2}\cong{M_2}\tens{M_2}$ we will study the tensor square of the fundamental representation $\bu$ of $\SUq$. Recall from \cite{KMRW} that $\bu$ is the matrix
\[
\bu=\begin{bmatrix}\alpha&-q\gamma^*\\\gamma&\alpha^*\end{bmatrix}
\]
of elements of $\C(\SUq)$. It is a representation acting on the Hilbert space $\CC^2$ equipped with an action of $\TT$ defined by the representation
\begin{equation}\label{TC2}
\TT\ni{z}\longmapsto\begin{bmatrix}z&0\\0&1\end{bmatrix}\in\B(\CC^2).
\end{equation}
The fact that $\bu$ is a representation amounts to saying that $\bu$ is a unitary element of $M_2\tens\C(\SUq)$ invariant for the action of $\TT$:
\[
\bigl(\rho^{M_2}_z\tens\rho^{\C(\SUq)}_z\bigr)(\bu)=\bu,\qquad{z}\in\TT,
\]
where the action of $\TT$ on $M_2$ is given by conjugation with \eqref{TC2}, and denoting matrix elements of $\bu$ by $\{\bu_{i,j}\}_{i,j=1,2}$ we have
\[
\DSUq(\bu_{i,j})=\sum_k\jmath_1(\bu_{i,k})\jmath_2(\bu_{k,j})
\]
for all $i,j$ (cf.~\eqref{DeltaSUq}).

According to the definition given in \cite[Proposition 5.3]{KMRW} the tensor square of the fundamental representation $\bu$ of $\SUq$ is
\[
(\jmath_1\tens\id)(\bu)(\jmath_2\tens\id)(\bu)\in{M_2}\btens_\zeta{M_2}\tens\C(\SUq)\cong{M_2}\tens{M_2}\tens\C(\SUq).
\]
Writing $\bu$ as
\[
\bu=\bn\bn^*\tens\alpha-q\bn\tens\gamma^*+\bn^*\tens\gamma+\bn^*\bn\tens\alpha^*
\]
we immediately get
\begin{equation}\label{uu}
\begin{split}
\bu\,{\scriptstyle\text{\raisebox{0.8pt}{\textcircled{\raisebox{-1.7pt}{$\top$}}}}}\!\;\bu
&=\bigl(\bn\bn^*\tens\I_2\tens\alpha-\bn\tens\I_2\tens{q}\gamma^*+\bn^*\tens\I_2\tens\gamma+\bn^*\bn\tens\I_2\tens\alpha^*\bigr)\\
&\quad\times\bigl(\I_2\tens\bn\bn^*\tens\alpha-\bigl[\begin{smallmatrix}\overline{q}&0\\0&q\end{smallmatrix}
\bigr]\tens\bn\tens\gamma^*+\bigl[\begin{smallmatrix}\zeta&0\\0&1\end{smallmatrix}
\bigr]\tens\bn^*\tens\gamma+\I_2\tens\bn^*\bn\tens\alpha^*\bigr).
\end{split}
\end{equation}
Using the identification $M_2\tens{M_2}$ with $M_4$
\[
\begin{bmatrix}
a&b\\c&d
\end{bmatrix}\tens
\begin{bmatrix}
a'&b'\\c'&d'
\end{bmatrix}
=\begin{bmatrix}
aa'&ab'&ba'&bb'\\
ac'&ad'&bc'&bd'\\
ca'&cb'&da'&db'\\
cc'&cd'&dc'&dd'
\end{bmatrix}
\]
and the identification $M_4\tens\C(\SUq)\cong{M_4(\C(\SUq)}$ we can rewrite right-hand side \eqref{uu} as
\begin{align*}
\begin{bmatrix}
\alpha&0&-q\gamma^*&0\\
0&\alpha&0&-q\gamma^*\\
\gamma&0&\alpha^*&0\\
0&\gamma&0&\alpha^*
\end{bmatrix}
\begin{bmatrix}
\alpha&-\overline{q}\gamma^*&0&0\\
\zeta\gamma&\alpha^*&0&0\\0&0&\alpha&-q\gamma^*\\
0&0&\gamma&\alpha^*
\end{bmatrix}
&=
\begin{bmatrix}
\alpha^2&-\overline{q}\alpha\gamma^*&-q\gamma^*\alpha&q^2{\gamma^*}^2\\
\zeta\alpha\gamma&\alpha\alpha^*&-q\gamma^*\gamma&-q\gamma^*\alpha^*\\
\gamma\alpha&-\overline{q}\gamma\gamma^*&\alpha^*\alpha&-q\alpha^*\gamma^*\\
\zeta\gamma^2&\gamma\alpha^*&\alpha^*\gamma&{\alpha^*}^2
\end{bmatrix}\\
&=\begin{bmatrix}
\alpha^2&-\overline{q}\alpha\gamma^*&-\alpha\gamma^*&q^2{\gamma^*}^2\\
q\gamma\alpha&\alpha\alpha^*&-q\gamma^*\gamma&-q\gamma^*\alpha^*\\
\gamma\alpha&-\overline{q}\gamma\gamma^*&\alpha^*\alpha&-\gamma^*\alpha^*\\
\zeta\gamma^2&\gamma\alpha^*&\alpha^*\gamma&{\alpha^*}^2
\end{bmatrix}.
\end{align*}
This is a representation of $\SUq$ acting on the Hilbert space $\CC^4\cong\CC^2\tens\CC^2$ with the tensor product action of $\TT$, i.e.~given by the representation
\[
\TT\ni{z}\longmapsto
\begin{bmatrix}
z^2&0&0&0\\
0&z&0&0\\
0&0&z&0\\
0&0&0&1
\end{bmatrix}
\in\B(\CC^4).
\]

\subsection{The three-dimensional representation}

It is easy to see that the vector
\[
\begin{bmatrix}
0\\1\\-\overline{q}\\0
\end{bmatrix}
\]
is invariant for $\bu\,{\scriptstyle\text{\raisebox{0.8pt}{\textcircled{\raisebox{-1.7pt}{$\top$}}}}}\!\;\bu$ (cf.~\cite[Section 5]{KMRW}). In order to find the remaining three-dimensional component of $\bu\,{\scriptstyle\text{\raisebox{0.8pt}{\textcircled{\raisebox{-1.7pt}{$\top$}}}}}\!\;\bu$ we change the basis to
\[
\left\{
\tfrac{1}{\sigma}\left[\begin{smallmatrix}
0\\1\\-\overline{q}\\0
\end{smallmatrix}\right],\left[\begin{smallmatrix}
1\\0\\0\\0
\end{smallmatrix}\right],\tfrac{1}{\sigma}\left[\begin{smallmatrix}
0\\q\\1\\0
\end{smallmatrix}\right],\left[\begin{smallmatrix}
0\\0\\0\\1
\end{smallmatrix}\right]
\right\},
\]
where $\sigma=\sqrt{1+|q|^2}$. The representation $\bu\,{\scriptstyle\text{\raisebox{0.8pt}{\textcircled{\raisebox{-1.7pt}{$\top$}}}}}\!\;\bu$ in this basis has the matrix
{\allowdisplaybreaks
\begin{align*}
\tfrac{1}{\sigma^2}
\begin{bmatrix}
0&1&-q&0\\
\sigma&0&0&0\\
0&\overline{q}&1&0\\
0&0&0&\sigma
\end{bmatrix}
&
\begin{bmatrix}
\alpha^2&-\overline{q}\alpha\gamma^*&-\alpha\gamma^*&q^2{\gamma^*}^2\\
\zeta\alpha\gamma&\alpha\alpha^*&-q\gamma^*\gamma&-q\gamma^*\alpha^*\\
\gamma\alpha&-\overline{q}\gamma\gamma^*&\alpha^*\alpha&-\gamma^*\alpha^*\\
\zeta\gamma^2&\gamma\alpha^*&\alpha^*\gamma&{\alpha^*}^2
\end{bmatrix}\begin{bmatrix}
0&\sigma&0&0\\
1&0&q&0\\
-\overline{q}&0&1&0\\
0&0&0&\sigma
\end{bmatrix}\\
&=\tfrac{1}{\sigma^2}
\begin{bmatrix}
0&1&-q&0\\
\sigma&0&0&0\\
0&\overline{q}&1&0\\
0&0&0&\sigma
\end{bmatrix}
\begin{bmatrix}
0&\sigma\alpha^2&-\sigma^2\alpha\gamma^*&q^2\sigma{\gamma^*}^2\\
\I&q\sigma\gamma\alpha&q(\I-\sigma^2\gamma^*\gamma)&-q\sigma\gamma^*\alpha^*\\
-\overline{q}\I&\sigma\gamma\alpha&\I-\sigma^2\gamma^*\gamma&-\sigma\gamma^*\alpha^*\\
0&{\zeta}\sigma\gamma^2&\sigma^2\alpha^*\gamma&\sigma{\alpha^*}^2
\end{bmatrix}\\
&=\begin{bmatrix}
\I&0&0&0\\
0&
\alpha^2&-\sigma\alpha\gamma^*&q^2{\gamma^*}^2\\
0&\sigma\gamma\alpha&\I-\sigma^2\gamma^*\gamma&-\sigma\gamma^*\alpha^*\\
0&\zeta\gamma^2&\sigma\alpha^*\gamma&{\alpha^*}^2
\end{bmatrix}.
\end{align*} } % \allowdisplaybreaks

Denote by $\cH$ the subspace $\operatorname{span}\{f_2,f_1,f_0\}\subset\CC^4$, where
\[
f_2=\left[\begin{smallmatrix}1\\0\\0\\0\end{smallmatrix}\right],\quad
f_1=\tfrac{1}{\sigma}\left[\begin{smallmatrix}0\\q\\1\\0\end{smallmatrix}\right],\quad
f_0=\left[\begin{smallmatrix}0\\0\\0\\1\end{smallmatrix}\right].
\]
Then $\cH$ becomes a $\TT$-Hilbert space with the action inherited from $\CC^4$ and $f_i$ is a vector of degree $i$ for $i=2,1,0$.

The matrix
\[
\boldsymbol{W}=\begin{bmatrix}
\alpha^2&-\sigma\alpha\gamma^*&q^2{\gamma^*}^2\\
\sigma\gamma\alpha&\I-\sigma^2\gamma^*\gamma&-\sigma\gamma^*\alpha^*\\
\zeta\gamma^2&\sigma\alpha^*\gamma&{\alpha^*}^2
\end{bmatrix}
\]
is then a unitary representation of the braided quantum groups $\SUq$ on $\cH$. For example to see that $\boldsymbol{W}$ is invariant under the action of $\TT$ we note that for each $z\in\TT$
\[
\bigl(\rho^{\B(\cH)}_z\tens\rho^{\C(\SUq)}_z\bigr)(\boldsymbol{W})
=\begin{bmatrix}
z^2&0&0\\0&z&0\\0&0&1
\end{bmatrix}
\begin{bmatrix}
\alpha^2&-\overline{z}\sigma\alpha\gamma^*&\overline{z}^2q^2{\gamma^*}^2\\
z\sigma\gamma\alpha&\I-\sigma^2\gamma^*\gamma&-\overline{z}\sigma\gamma^*\alpha^*\\
z^2\zeta\gamma^2&z\sigma\alpha^*\gamma&{\alpha^*}^2
\end{bmatrix}
\begin{bmatrix}
\overline{z}^2&0&0\\0&\overline{z}&0\\0&0&1
\end{bmatrix}=\boldsymbol{W}.
\]

By scaling the basis we obtain a different form of this representation:
\begin{align*}
\bV&=\begin{bmatrix}
q^{-1}&0&0\\
0&\sigma^{-1}&0\\
0&0&-1
\end{bmatrix}
\begin{bmatrix}
\alpha^2&-\sigma\alpha\gamma^*&q^2{\gamma^*}^2\\
\sigma\gamma\alpha&\I-\sigma^2\gamma^*\gamma&-\sigma\gamma^*\alpha^*\\
\zeta\gamma^2&\sigma\alpha^*\gamma&{\alpha^*}^2
\end{bmatrix}
\begin{bmatrix}
q&0&0\\
0&\sigma&0\\
0&0&-1
\end{bmatrix}\\
&=\begin{bmatrix}
\alpha^2&-\sigma^2\gamma^*\alpha&-q{\gamma^*}^2\\
\zeta\alpha\gamma&\I-\sigma^2\gamma^*\gamma&\gamma^*\alpha^*\\
-q\zeta\gamma^2&-\sigma^2\alpha^*\gamma&{\alpha^*}^2
\end{bmatrix}
\end{align*}
(when $q$ is real it coincides with the form used by Podle\'{s} in \cite[Page 194]{spheres} after conjugation by $\left[
\begin{smallmatrix}
0&0&1\\
0&1&0\\
1&0&0
\end{smallmatrix}
\right]$) and denoting the matrix elements of $\bV$ by
\[
\bV=
\begin{bmatrix}
\bv_{-1,1}&\bv_{-1,0}&\bv_{-1,1}\\
\bv_{0,1}&\bv_{0,0}&\bv_{0,1}\\
\bv_{1,1}&\bv_{1,0}&\bv_{1,1}
\end{bmatrix}
\]
we have
\begin{equation}\label{V_is_a_rep}
\DSUq(\bv_{i,j})=\sum_{k=-1}^1\jmath_1(\bv_{i,k})\jmath_2(\bv_{k,j}),\qquad{i,j}=-1,0,1
\end{equation}
which can be checked directly as in Section \ref{directcheck}.

\subsection{Irreducibility}\label{irr}

The representation $\bV$ is irreducible in the following sense: if $A\in{M_3}$ commutes with $\bV$ then $A=\lambda\I_3$ for some $\lambda\in\CC$. Indeed if
For any scalar matrix
\[
A=\begin{bmatrix}
a_{-1,-1}&a_{-1,0}&a_{-1,1}\\
a_{0,-1}&a_{0,0}&a_{0,1}\\
a_{1,-1}&a_{1,0}&a_{1,1}
\end{bmatrix}
\]
the matrix $A\bV$ is
\[
\resizebox{\textwidth}{!}{\ensuremath{\begin{bmatrix}
a_{-1,-1}\bv_{-1,-1}+a_{-1,0}\bv_{0,-1}+a_{-1,1}\bv_{1,-1}&a_{-1,-1}\bv_{-1,0}+a_{-1,0}\bv_{0,0}+a_{-1,1}\bv_{1,0}&a_{-1,-1}\bv_{-1,1}+a_{-1,0}\bv_{0-1}+a_{-1,1}\bv_{1,1}\\
a_{0,-1}\bv_{-1,-1}+a_{0,0}\bv_{0,-1}+a_{0,1}\bv_{1,-1}&a_{0,-1}\bv_{-1,0}+a_{0,0}\bv_{0,0}+a_{0,1}\bv_{1,0}&a_{0,-1}\bv_{-1,1}+a_{0,0}\bv_{0-1}+a_{0,1}\bv_{1,1}\\
a_{1,-1}\bv_{-1,-1}+a_{1,0}\bv_{0,-1}+a_{1,1}\bv_{1,-1}&a_{1,-1}\bv_{-1,0}+a_{1,0}\bv_{0,0}+a_{1,1}\bv_{1,0}&a_{1,-1}\bv_{-1,1}+a_{1,0}\bv_{0-1}+a_{1,1}\bv_{1,1}
\end{bmatrix}}}
\]
while $\bV{A}$ is
\[
\resizebox{\textwidth}{!}{\ensuremath{\displaystyle\begin{bmatrix}
a_{-1,-1}\bv_{-1,-1}+a_{0,-1}\bv_{-1,0}+a_{1,-1}\bv_{-1,1}&a_{-1,0}\bv_{-1,-1}+a_{0,0}\bv_{-1,0}+a_{1,0}\bv_{-1,1}&a_{-1,1}\bv_{-1,-1}+a_{0,1}\bv_{-1,0}+a_{1,1}\bv_{-1,1}\\
a_{-1,-1}\bv_{0,-1}+a_{0,-1}\bv_{0,0}+a_{1,-1}\bv_{0,1}&a_{-1,0}\bv_{0,-1}+a_{0,0}\bv_{0,0}+a_{1,0}\bv_{0,1}&a_{-1,1}\bv_{0,-1}+a_{0,1}\bv_{0,0}+a_{1,1}\bv_{0,1}\\
a_{-1,-1}\bv_{1,-1}+a_{0,-1}\bv_{1,0}+a_{1,-1}\bv_{1,1}&a_{-1,0}\bv_{1,-1}+a_{0,0}\bv_{1,0}+a_{1,0}\bv_{1,1}&a_{-1,1}\bv_{1,-1}+a_{0,1}\bv_{1,0}+a_{1,1}\bv_{1,1}
\end{bmatrix}}}
\]
Hence irreducibility of $\bV$ follows if we can prove that the matrix elements of $\bV$ are linearly independent. This can be achieved along the lines of \cite[Theorem 1.2]{twisted}. More precisely let $\pi$ be the representation of $\C(\SUq)$ on $\ell_2(\ZZ_+\times\ZZ)$ defined by
\[
\pi(\alpha)e_{n,k}=\sqrt{1-|q|^{2n}}e_{n-1,k},\qquad
\pi(\gamma)e_{n,k}=\overline{q}^ne_{n,k+1},
\]
where $\{e_{n,k}\}_{n\in\ZZ_+,k\in\ZZ}$ is the standard orthonormal basis of $\ell_2(\ZZ_+\times\ZZ)$. We have
{\allowdisplaybreaks
\begin{align*}
\pi(\bv_{-1,-1})e_{n,k}&=\sqrt{1-|q|^{2n}}\sqrt{1-|q|^{2n-2}}e_{n-2,k},\\
\pi(\bv_{-1,0})e_{n,k}&=-\sigma^2q^{n-1}\sqrt{1-|q|^{2n}}e_{n-1,k-1},\\
\pi(\bv_{-1,1})e_{n,k}&=-q^{2n+1}e_{n,k-2},\\
\pi(\bv_{0,-1})e_{n,k}&=\zeta\overline{q}^n\sqrt{1-|q|^{2n}}e_{n-1,k+1},\\
\pi(\bv_{0,0})e_{n,k}&=\bigl(1+s^2|q|^{2n}\bigr)e_{n,k},\\
\pi(\bv_{0,1})e_{n,k}&=q^{n+1}\sqrt{1-|q|^{2n+2}}e_{n+1,k-1},\\
\pi(\bv_{1,-1})e_{n,k}&=-q\zeta\overline{q}^{2n}e_{n,k+2},\\
\pi(\bv_{1,0})e_{n,k}&=-\sigma^2\overline{q}^n\sqrt{1-|q|^{2n+2}}e_{n+1,k+1}\\
\pi(\bv_{1,1})e_{n,k}&=\sqrt{1-|q|^{2n+2}}\sqrt{1-|q|^{2n+4}}e_{n+2,k}.
\end{align*}
}This shows that the linear map
\[
\operatorname{span}\bigl\{\bv_{i,j}\,\bigl.\bigr|\,i,j=-1,0,1\bigr\}\ni{x}\longmapsto\pi(x)e_{2,0}\in\ell_2(\ZZ_+\times\ZZ)
\]
has nine-dimensional range.

\subsection{The quotient sphere in terms of $\bV$}\label{inTerms}

Let us note that the \cst-algebra $\C(\SS^2_q)$ of continuous functions on the quotient sphere $\SS^2_q$ is te unital \cst-subalgebra of $\C(\SUq)$ generated by the elements in the middle column of the representation $\bV$. Moreover writing
\[
e_i=\bv_{i,0},\qquad{i}=-1,0,1
\]
we find that $e_i$ is a homogeneous element of degree $i$, ${e_i}^*=e_{-i}$ for all $i$ and
\[
\Gamma(e_i)=\sum_{k=-1}^{1}\jmath_1(\bv_{i,k})\jmath_2(e_k),\qquad{i}=-1,0,1.
\]

Consider some three elements $x_{-1},x_0,x_1\in\C(\SUq)$ such that
\[
\DSUq(x_i)=\sum_{k=-1}^1\jmath_1(\bv_{i,k})\jmath_2(x_k),\qquad{i}=-1,0,1.
\]
Then for each $i$
\[
x_i=(\id\btens_\zeta\eps)\DSUq(x_i)=\sum_{k=-1}^1\eps(x_k)\bv_{i,k}.
\]
This shows that $x\in\operatorname{span}\{\bv_{i,j}\,\left.\right|i,j=-1,0,1\}$. If we further assume that $x\in\C(\SS^2_q)$, i.e.~that $x$ is invariant under the right action $\sigma$ of $\TT$, then $x$ has to be a linear combination of those matrix elements of $\bV$ which are invariant under $\sigma$ (cf.~the proof of Corollary \ref{genXq}). These are precisely $e_{-1},e_0$ and $e_1$. It follows that the only three-dimensional subspace of $\C(\SS^2_q)$ possessing a basis $x_{-1},x_0,x_1$ such that
\[
\Gamma(x_i)=\sum_{k=-1}^{1}\jmath_1(\bv_{i,k})\jmath_2(x_k),\qquad{i}=-1,0,1
\]
is $\operatorname{span}\{e_{-1},e_0,e_1\}$.

\section{Quantum spaces with a simple action of braided $\SUq$}\label{simplespaces}

Let $\XX$ be a compact quantum space such that $\C(\XX)\in\operatorname{Ob}(\CstT)$ and there exists $\Gamma:\CC(\XX)\to\C(\SUq)\btens_\zeta\C(\XX)$ such that
\[
(\Gamma\btens_\zeta\id)\comp\Gamma=(\id\btens_\zeta\DSUq)\comp\Gamma.
\]
Assume that
\begin{enumerate}
\item\label{simple1} if $a\in\C(\XX)$ satisfies $\Gamma(a)=\jmath_2(a)$ then $a\in\CC\I$,
\item\label{simple2} $\C(\XX)$ is generated by $\I$ and a three-dimensional subspace $\mathsf{W}\subset\C(\XX)$ equipped with a basis $\{e_{-1},e_0,e_1\}$ such that
\[
\Gamma(e_i)=\sum_{k=-1}^1\jmath_1(\bv_{i,k})\jmath_2(e_k),\qquad{i}=-1,0,1
\]
(we say that the basis $\{e_{-1},e_0,e_1\}$ \emph{transforms according} to $\bV$),
\item\label{simple2.5} $\mathsf{W}$ is the only three-dimensional subspace of $\C(\XX)$ equipped with a basis which transforms according to $\bV$,
\item\label{simple3} for each $i$ the element $e_i$ is homogeneous of degree $i$ under the action of $\TT$ on $\C(\XX)$.
\end{enumerate}

\begin{remark}
The results of Theorem \ref{quotientsphere} and Section \ref{inTerms} show that the quotient sphere $\SS^2_q$ described in Section \ref{theQuotientS2} satisfies the conditions \eqref{simple1}--\eqref{simple3}.
\end{remark}

\begin{proposition}\label{uniquebasis}
$\{e_{-1},e_0,e_1\}$ is unique up to proportionality basis of $\mathsf{W}$ which satisfies condition \eqref{simple2}.
\end{proposition}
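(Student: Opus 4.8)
The plan is to prove that any other basis $\{e_{-1}',e_0',e_1'\}$ of $\mathsf{W}$ transforming according to $\bV$ is a common scalar multiple of $\{e_{-1},e_0,e_1\}$. Since both are bases of the same three-dimensional space $\mathsf{W}$, I would begin by writing the change of basis as $e_i'=\sum_j T_{ij}e_j$ for a unique invertible matrix $T=(T_{ij})\in M_3$ indexed by $\{-1,0,1\}$. The whole argument then consists in turning the two transformation laws into an intertwining condition for $T$ and invoking irreducibility.

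Next I would compute $\Gamma(e_i')$ in two ways. Applying $\Gamma$ to $e_i'=\sum_j T_{ij}e_j$ and using condition \eqref{simple2} for the original basis gives $\Gamma(e_i')=\sum_k\jmath_1\bigl((T\bV)_{ik}\bigr)\jmath_2(e_k)$, where $(T\bV)_{ik}=\sum_j T_{ij}\bv_{jk}$. On the other hand, using condition \eqref{simple2} for the new basis together with $e_k'=\sum_l T_{kl}e_l$ and linearity of $\jmath_2$ gives $\Gamma(e_i')=\sum_k\jmath_1\bigl((\bV T)_{ik}\bigr)\jmath_2(e_k)$, where $(\bV T)_{ik}=\sum_l\bv_{il}T_{lk}$. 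Equating the two expressions yields $\sum_k\jmath_1\bigl((T\bV)_{ik}\bigr)\jmath_2(e_k)=\sum_k\jmath_1\bigl((\bV T)_{ik}\bigr)\jmath_2(e_k)$ for every $i$.

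The main obstacle is to deduce from this that the coefficients coincide, i.e.\ that the map $(c_{-1},c_0,c_1)\mapsto\sum_k\jmath_1(c_k)\jmath_2(e_k)$ is injective on $\C(\SUq)^3$. Here I would invoke the $\TT^2$-action on $\C(\SUq)\btens_\zeta\C(\XX)$ from Remark \ref{actionT2}: decomposing each $c_k=\sum_p c_k^{(p)}$ into homogeneous components, the element $\jmath_1(c_k^{(p)})\jmath_2(e_k)$ lies in the $\TT^2$-spectral subspace of bidegree $(p,k)$, because $e_k$ is homogeneous of degree $k$ by condition \eqref{simple3} while $\jmath_1$ contributes only to the first circle and $\jmath_2$ only to the second. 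Distinct pairs $(p,k)$ label distinct spectral subspaces and each occurs at most once in the sum, so vanishing of the whole sum forces $\jmath_1(c_k^{(p)})\jmath_2(e_k)=0$ for all $p,k$; realizing the braided product inside $\M(\C(\TT^2_\zeta)\tens\C(\SUq)\tens\C(\XX))$ this reads $u^pv^k\tens c_k^{(p)}\tens e_k=0$, and since $e_k\neq0$ we get $c_k^{(p)}=0$, hence $c_k=0$. Injectivity then gives $(T\bV)_{ik}=(\bV T)_{ik}$ for all $i,k$, that is $T\bV=\bV T$.

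Finally I would apply irreducibility of $\bV$ (Section \ref{irr}): a matrix $T\in M_3$ commuting with $\bV$ is necessarily scalar, $T=\lambda\I_3$, and invertibility of $T$ forces $\lambda\neq0$. Therefore $e_i'=\lambda e_i$ for all $i$, which is precisely the asserted uniqueness up to proportionality. The only genuinely delicate point is the coefficient-extraction step of the previous paragraph; everything else is a formal manipulation of the coaction combined with Schur's lemma.
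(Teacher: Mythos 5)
Your proof is correct and follows essentially the same route as the paper's: a change-of-basis matrix, coefficient extraction via the torus action of Remark \ref{actionT2} to obtain the intertwining relation $T\bV=\bV T$, and then irreducibility of $\bV$. The only cosmetic difference is that the paper averages only over the second circle factor and then invokes injectivity of $\jmath_1$, whereas you decompose into $\TT^2$-bidegree components and read off the vanishing from the explicit elementary-tensor realization of the braided product.
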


\begin{proof}
Let $\{f_{-1},f_0,f_1\}$ be another basis of $\mathsf{W}$ with the property
\[
\Gamma(f_i)=\sum_{i=-1}^1\jmath_1(\bv_{i,j})\jmath_2(f_j),\qquad{i}=-1,0,1
\]
There is an invertible $3\times{3}$ scalar matrix $A$ such that
\[
\begin{bmatrix}f_{-1}\\f_0\\f_1\end{bmatrix}=A\begin{bmatrix}e_{-1}\\e_0\\e_1\end{bmatrix}.
\]
Therefore the condition
\[
\begin{bmatrix}\Gamma(f_{-1})\\\Gamma(f_0)\\\Gamma(f_1)\end{bmatrix}
=\begin{bmatrix}
\jmath_1(\bv_{-1,-1})&\jmath_1(\bv_{-1,0})&\jmath_1(\bv_{-1,1})\\
\jmath_1(\bv_{0,-1})&\jmath_1(\bv_{0,0})&\jmath_1(\bv_{0,1})\\
\jmath_1(\bv_{1,-1})&\jmath_1(\bv_{1,0})&\jmath_1(\bv_{1,1})
\end{bmatrix}
\begin{bmatrix}\jmath_2(f_{-1})\\\jmath_2(f_0)\\\jmath_2(f_1)\end{bmatrix}
\]
means
\[
A\begin{bmatrix}\Gamma(e_{-1})\\\Gamma(e_0)\\\Gamma(e_1)\end{bmatrix}
=\begin{bmatrix}
\jmath_1(\bv_{-1,-1})&\jmath_1(\bv_{-1,0})&\jmath_1(\bv_{-1,1})\\
\jmath_1(\bv_{0,-1})&\jmath_1(\bv_{0,0})&\jmath_1(\bv_{0,1})\\
\jmath_1(\bv_{1,-1})&\jmath_1(\bv_{1,0})&\jmath_1(\bv_{1,1})
\end{bmatrix}
A\begin{bmatrix}\jmath_2(e_{-1})\\\jmath_2(e_0)\\\jmath_2(e_1)\end{bmatrix}
\]
or in other words
\[
\begin{bmatrix}\Gamma(e_{-1})\\\Gamma(e_0)\\\Gamma(e_1)\end{bmatrix}
=A^{-1}\begin{bmatrix}
\jmath_1(\bv_{-1,-1})&\jmath_1(\bv_{-1,0})&\jmath_1(\bv_{-1,1})\\
\jmath_1(\bv_{0,-1})&\jmath_1(\bv_{0,0})&\jmath_1(\bv_{0,1})\\
\jmath_1(\bv_{1,-1})&\jmath_1(\bv_{1,0})&\jmath_1(\bv_{1,1})
\end{bmatrix}
A\begin{bmatrix}\jmath_2(e_{-1})\\\jmath_2(e_0)\\\jmath_2(e_1)\end{bmatrix}.
\]

Let us write $\widetilde{\bv}_{i,j}$ for the $(i,j)$-element of the matrix $A^{-1}\bV{A}$. Then the above shows that for each $i$
\begin{equation}\label{rownedzialania}
\sum_k\jmath_1(\widetilde{\bv}_{i,k})\jmath_2(e_k)=\sum_k\jmath_1(\bv_{i,k})\jmath_2(e_k).
\end{equation}
Now we note that there is an action $\varrho$ of $\TT$ on $\C(\SUq)\btens\C(\XX)$ such that
\[
\varrho_z\bigl(\jmath_1(a)\jmath_2(x)\bigr)=\jmath_1(a)\jmath\bigl(\rho^{\C(\XX)}_z(x)\bigr),\qquad{z}\in\TT
\]
which is the restriction of the action of $\TT^2$ described in Remark \ref{actionT2} to the subgroup $\bigl\{(1,z)\,\bigl.\bigr|\,z\in\TT\bigr\}$. Multiplying both sides of \eqref{rownedzialania} by $z^{-l}$, applying $\varrho_z$ to both sides and integrating over $z\in\TT$ we obtain
\[
\jmath_1(\widetilde{\bv}_{i,l})=\jmath_1(\bv_{i,l})
\]
for $l=-1,0,1$. Since this holds for all $i$ and the morphism $\jmath_1$ is injective (\cite[Section 1]{KMRW}), it follows that
\[
\bV=A^{-1}\bV{A},
\]
so $A$ must be proportional to $\I_3$ and consequently there exists a non-zero scalar $\lambda$ such that $f_i=\lambda{e_i}$ for all $i$.
\end{proof}

\begin{proposition}
There exists a non-zero complex number $\delta$ such that ${e_i}^*={\delta}e_{-i}$ for $i=-1,0,1$.
\end{proposition}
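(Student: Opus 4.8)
The plan is to prove that the relabelled adjoints $f_j:=e_{-j}^*$ again form a basis transforming according to $\bV$, and then to invoke the uniqueness results already available (condition \eqref{simple2.5} together with Proposition \ref{uniquebasis}) to force $\{f_j\}$ to be proportional to $\{e_j\}$; reading off the constant of proportionality yields the claimed $\delta$.

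First I would transport the defining relation through the $*$-operation. Applying $*$ to $\Gamma(e_i)=\sum_k\jmath_1(\bv_{i,k})\jmath_2(e_k)$ and using that $\Gamma,\jmath_1,\jmath_2$ are $*$-homomorphisms gives
\[
\Gamma(e_i^*)=\sum_k\jmath_2(e_k^*)\jmath_1(\bv_{i,k}^*).
\]
Reading off the explicit form of $\bV$ one has $\deg(\bv_{i,k})=i-k$, hence $\deg(\bv_{i,k}^*)=k-i$, while condition \eqref{simple3} gives $\deg(e_k^*)=-k$. Commuting each $\jmath_1(\bv_{i,k}^*)$ to the left past $\jmath_2(e_k^*)$ by the braiding relation \eqref{braid} produces the scalar $\overline{\zeta}^{\,k(i-k)}$, so that
\[
\Gamma(e_i^*)=\sum_k\overline{\zeta}^{\,k(i-k)}\jmath_1(\bv_{i,k}^*)\jmath_2(e_k^*).
\]
Setting $i=-j$, substituting $m=-k$ and $e_k^*=f_{-k}=f_m$, this becomes $\Gamma(f_j)=\sum_m\overline{\zeta}^{\,m(j-m)}\jmath_1(\bv_{-j,-m}^*)\jmath_2(f_m)$. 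Thus $\{f_j\}$ transforms according to $\bV$ exactly when the $\C(\SUq)$-valued identity
\[
\overline{\zeta}^{\,m(j-m)}\,\bv_{-j,-m}^*=\bv_{j,m},\qquad j,m=-1,0,1,
\]
holds.

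Next I would verify this identity directly on the nine entries of $\bV$, using $\overline{\zeta}=\overline{q}/q$, the normality of $\gamma$, and the reality of $\sigma$. The five entries lying in the middle column or on the main diagonal carry exponent $0$ and are immediate (the central one reducing to $\gamma\gamma^*=\gamma^*\gamma$); the two middle-row entries $\bv_{0,\pm1}$ are reconciled by the single factor $\overline{\zeta}^{\mp1}$, which matches the $\zeta$-phase between $\bv_{0,\pm1}$ and the adjoint of $\bv_{0,\mp1}$; and the two corner entries $\bv_{\pm1,\mp1}$ carry $\overline{\zeta}^{-2}=\zeta^2$, which combined with an adjoint such as $(-q\zeta\gamma^2)^*=-\overline{q}\,\overline{\zeta}(\gamma^*)^2$ reproduces the coefficient $-q$. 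This finite computation is the only real content of the proof.

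Finally, since $*$ is an injective conjugate-linear map and $\{e_j\}$ is a basis, the $\{f_j\}$ are linearly independent and span a three-dimensional subspace; by the transformation law just established and condition \eqref{simple2.5} this subspace must equal $\mathsf{W}$, so $\{f_j\}$ is a basis of $\mathsf{W}$ transforming according to $\bV$. Proposition \ref{uniquebasis} then supplies a nonzero scalar $\delta$ with $f_j=\delta e_j$ for all $j$, i.e.\ $e_{-j}^*=\delta e_{j}$, which is $e_i^*=\delta e_{-i}$ after replacing $j$ by $-i$. The main obstacle is purely bookkeeping: one must track the braiding factor $\overline{\zeta}^{\,k(i-k)}$ without error and check that it conspires with the explicit entries of $\bV$, since a single confusion between $\zeta$ and $\overline{\zeta}$ would destroy the identity. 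A conceptual point worth flagging is that $e_i^*$ has no a priori reason to lie in $\mathsf{W}$: membership is recovered only after the transformation law is in hand, through condition \eqref{simple2.5}.
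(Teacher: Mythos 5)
Your argument is correct and is essentially the paper's own proof: take adjoints of $\Gamma(e_i)=\sum_k\jmath_1(\bv_{i,k})\jmath_2(e_k)$, use the braiding relation \eqref{braid} together with the adjoint and degree relations among the $\bv_{i,j}$ to show that $\bigl\{e_{-j}^{\,*}\bigr\}$ transforms according to $\bV$, and conclude via condition \eqref{simple2.5} and Proposition \ref{uniquebasis} (your closing remark that $e_i^*\in\mathsf{W}$ is a conclusion rather than a hypothesis is exactly the point the paper leaves implicit). The only quibble is cosmetic: the braiding factor for the middle-row entries is $\overline{\zeta}^{-1}$ for both $\bv_{0,1}$ and $\bv_{0,-1}$, not $\overline{\zeta}^{\mp1}$, but the identity $\overline{\zeta}^{\,m(j-m)}\bv_{-j,-m}^{\,*}=\bv_{j,m}$ that you actually verify holds in all nine cases.
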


\begin{proof}
We first note that
\begin{align*}
{\bv_{-1,-1}}^*&=\bv_{1,1},&{\bv_{-1,0}}^*&=\bv_{1,0},&{\bv_{0,0}}^*&=\bv_{0,0},\\
{\bv_{0,-1}}^*&=\overline{\zeta}\bv_{0,1},&&&{\bv_{1,-1}}^*&=\overline{\zeta}^2\bv_{-1,1}
\end{align*}
and the degrees of matrix elements of $\bV$ are
\begin{equation}\label{degrees}
\begin{bmatrix}
\operatorname{deg}(\bv_{-1,-1})&\operatorname{deg}(\bv_{-1,0})&\operatorname{deg}(\bv_{-1,1})\\
\operatorname{deg}(\bv_{0,-1})&\operatorname{deg}(\bv_{0,0})&\operatorname{deg}(\bv_{0,1})\\
\operatorname{deg}(\bv_{1,-1})&\operatorname{deg}(\bv_{1,0})&\operatorname{deg}(\bv_{1,1})
\end{bmatrix}
=
\begin{bmatrix}
0&-1&-2\\
1&0&-1\\
2&1&0
\end{bmatrix}.
\end{equation}

Using this and formula \eqref{braid} we compute
\begin{align*}
\Gamma({e_{-1}}^*)&=\bigl(\jmath_1(\bv_{-1,-1})\jmath_2(e_{-1})+\jmath_1(\bv_{-1,0})\jmath_2(e_0)+\jmath_1(\bv_{-1,1})\jmath_2(e_1)\bigr)^*\\
&=\jmath_2({e_{-1}}^*)\jmath_1(\bv_{1,1})+\jmath_2({e_0}^*)\jmath_1(\bv_{1,0})+\overline{\zeta}^2\jmath_2({e_1}^*)\jmath_1(\bv_{1,-1})\\
&=\overline{\zeta}^{1\cdot{0}}\jmath_1(\bv_{1,1})\jmath_2({e_{-1}}^*)+\overline{\zeta}^{0\cdot{1}}\jmath_1(\bv_{1,0})\jmath_2({e_0}^*)+\overline{\zeta}^2\overline{\zeta}^{(-1)\cdot{2}}\jmath_1(\bv_{1,-1})\jmath_2({e_1}^*)\\
&=\jmath_1(\bv_{1,1})\jmath_2({e_{-1}}^*)+\jmath_1(\bv_{1,0})\jmath_2({e_0}^*)+\jmath_1(\bv_{1,-1})\jmath_2({e_1}^*).
\end{align*}
Similarly
\begin{align*}
\Gamma({e_0}^*)&=\bigl(\jmath_1(\bv_{0,-1})\jmath_2(e_{-1})+\jmath_1(\bv_{0,0})\jmath_2(e_0)+\jmath_1(\bv_{0,1})\jmath_2(e_1)\bigr)^*\\
&=\overline{\zeta}\jmath_2({e_{-1}}^*)\jmath_1(\bv_{0,1})+\jmath_2({e_0}^*)\jmath_1(\bv_{0,0})+\overline{\zeta}\jmath_2({e_1}^*)\jmath_1(\bv_{0,-1})\\
&=\overline{\zeta}\,\overline{\zeta}^{1\cdot(-1)}\jmath_2({e_{-1}}^*)\jmath_1(\bv_{0,1})+\overline{\zeta}^{0\cdot{0}}\jmath_2({e_0}^*)\jmath_1(\bv_{0,0})
+\overline{\zeta}\,\overline{\zeta}^{(-1)\cdot{1}}\jmath_2({e_1}^*)\jmath_1(\bv_{0,-1})\\
&=\jmath_2({e_{-1}}^*)\jmath_1(\bv_{0,1})+\jmath_2({e_0}^*)\jmath_1(\bv_{0,0})+\jmath_2({e_1}^*)\jmath_1(\bv_{0,-1})
\end{align*}
and
\begin{align*}
\Gamma({e_1}^*)&=\bigl(\jmath_1(\bv_{1,-1})\jmath_2(e_{-1})+\jmath_1(\bv_{1,0})\jmath_2(e_0)+\jmath_1(\bv_{1,1})\jmath_2(e_1)\bigr)^*\\
&=\overline{\zeta}^2\jmath_2({e_{-1}}^*)\jmath_1(\bv_{-1,1})+\jmath_2({e_0}^*)\jmath_1(\bv_{-1,0})+\jmath_2({e_1}^*)\jmath_1(\bv_{-1,-1})\\
&=\overline{\zeta}^2\overline{\zeta}^{1\cdot(-2)}\jmath_1(\bv_{-1,1})\jmath_2({e_{-1}}^*)+\overline{\zeta}^{0\cdot(-1)}\jmath_1(\bv_{-1,0})\jmath_2({e_0}^*)+\overline{\zeta}^{(-1)\cdot{0}}\jmath_1(\bv_{-1,-1})\jmath_2({e_1}^*)\\
&=\jmath_1(\bv_{-1,1})\jmath_2({e_{-1}}^*)+\jmath_1(\bv_{-1,0})\jmath_2({e_0}^*)+\jmath_1(\bv_{-1,-1})\jmath_2({e_1}^*).
\end{align*}

It follows that the basis $\bigl\{{e_1}^*,{e_0}^*,{e_{-1}}^*\bigr\}$ satisfies the conditions fixing the basis $\{e_{-1},e_0,e_1\}$ uniquely up to proportionality, so there exists a non-zero $\delta\in\CC$ such that ${e_i}^*=\delta{e_{-i}}$ for all $i\in\{-1,0,1\}$.
\end{proof}

Now since $e_0=\bigl({e_0}^*\bigr)^*=\bigl(\delta{e_0}\bigr)^*=\overline{\delta}{e_0}^*=|\delta|^2e_0$, so $|\delta|=1$. Writing $\delta=\mathrm{e}^{\mathrm{i}\theta}$ and putting $f_i=\mathrm{e}^{-\mathrm{i}\frac{\theta}{2}}e_i$ for $i=-1,0,1$ we obtain ${f_i}^*=f_{-i}$ for all $i$. In other words, replacing $e_i$ with $\mathrm{e}^{-\mathrm{i}\frac{\theta}{2}}e_i$ we can assume that
\[
{e_i}^*=e_{-i},\qquad{i}=-1,0,1.
\]

\begin{proposition}
For $k,l\in\{-1,0,1\}$ we have
\begin{equation}\label{Gekel}
\Gamma(e_ke_l)=\sum_{r,p}\zeta^{r(p-l)}\jmath_1(\bv_{k,r}\bv_{l,p})\jmath_2(e_re_p).
\end{equation}
\end{proposition}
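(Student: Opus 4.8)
The plan is to use that $\Gamma$ is a unital $*$-homomorphism together with the braiding relation \eqref{braid}. Since $\Gamma$ is multiplicative, I would begin by writing $\Gamma(e_ke_l)=\Gamma(e_k)\Gamma(e_l)$ and substitute the transformation law from condition \eqref{simple2} into each factor, obtaining
\[
\Gamma(e_ke_l)=\sum_{r,p}\jmath_1(\bv_{k,r})\,\jmath_2(e_r)\jmath_1(\bv_{l,p})\,\jmath_2(e_p).
\]
This reduces the whole computation to the task of pushing the inner factor $\jmath_2(e_r)$ to the right past $\jmath_1(\bv_{l,p})$.

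The key step is a single application of the braiding relation \eqref{braid} to the homogeneous elements $e_r\in\C(\XX)$ (living in the second leg) and $\bv_{l,p}\in\C(\SUq)$ (living in the first leg), which gives
\[
\jmath_2(e_r)\jmath_1(\bv_{l,p})=\overline{\zeta}^{\,\deg(\bv_{l,p})\deg(e_r)}\jmath_1(\bv_{l,p})\jmath_2(e_r).
\]
I would then read off the two degrees from the data already assembled: condition \eqref{simple3} gives $\deg(e_r)=r$, while the degree table \eqref{degrees} gives $\deg(\bv_{l,p})=l-p$. Because $|\zeta|=1$ we have $\overline{\zeta}=\zeta^{-1}$, so the braiding scalar simplifies to $\overline{\zeta}^{(l-p)r}=\zeta^{r(p-l)}$, which is precisely the factor appearing in \eqref{Gekel}.

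Finally I would recombine the two legs using multiplicativity of the embeddings: $\jmath_1(\bv_{k,r})\jmath_1(\bv_{l,p})=\jmath_1(\bv_{k,r}\bv_{l,p})$ and $\jmath_2(e_r)\jmath_2(e_p)=\jmath_2(e_re_p)$. Substituting the braided commutation into the double sum and regrouping yields \eqref{Gekel} directly. The argument requires no deep input beyond the homomorphism properties of $\Gamma$, $\jmath_1$, $\jmath_2$ and the transformation law \eqref{simple2}; the one place where care is needed --- and essentially the only possible source of a sign or exponent error --- is the bookkeeping of the $\zeta$-power, where one must pair $\deg(\bv_{l,p})$ with $\deg(e_r)$ (and not with the degrees of $\bv_{k,r}$ or $e_p$) and correctly pass from $\overline{\zeta}$ to $\zeta$. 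I do not expect any genuine obstacle here.
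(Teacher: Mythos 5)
Your argument is correct and is essentially identical to the paper's proof: both expand $\Gamma(e_ke_l)=\Gamma(e_k)\Gamma(e_l)$ via condition \eqref{simple2}, apply the braiding relation \eqref{braid} once to move $\jmath_2(e_r)$ past $\jmath_1(\bv_{l,p})$, and recombine using multiplicativity of $\jmath_1$ and $\jmath_2$. Your degree bookkeeping is the careful version: the table \eqref{degrees} indeed gives $\deg(\bv_{l,p})=l-p$, producing the factor $\overline{\zeta}^{\,r(l-p)}=\zeta^{r(p-l)}$ exactly as in the paper's displayed computation.
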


\begin{proof}
Using the fact that $\deg(e_r)=r$ and $\deg(\bv_{l,p})=p-l$ (cf.~\eqref{degrees}) and formula \eqref{braid} we compute
\begin{align*}
\Gamma(e_ke_l)&=\Gamma(e_k)\Gamma(e_l)=\biggl(\sum_r\jmath_1(\bv_{k,r})\jmath_2(e_r)\biggr)\biggl(\sum_p\jmath_1(\bv_{l,p})\jmath_2(e_p)\biggr)\\
&=\sum_{r,p}\jmath_1(\bv_{k,r})\jmath_2(e_r)\jmath_1(\bv_{l,p})\jmath_2(e_p)\\
&=\sum_{r,p}\jmath_1(\bv_{k,r})\overline{\zeta}^{r(l-p)}\jmath_1(\bv_{l,p})\jmath_2(e_r)\jmath_2(e_p)\\
&=\sum_{r,p}\zeta^{r(p-l)}\jmath_1(\bv_{k,r}\bv_{l,p})\jmath_2(e_re_p).
\end{align*}
\end{proof}

\begin{remark}\label{bigV}
Let us introduce an order on pairs $\bigl\{(k,l)\,\bigl.\bigr|\,k,l\in\{-1,0,1\}\bigr\}$ by pulling back the standard order on $\{1,\dotsc,9\}$ through the bijection
\[
(k,l)\longmapsto{3^{k+1}+(l+1)}.
\]
(in other words $(k+1)(l+1)$ is treated as an expression of a number form $\{1,\dotsc,9\}$ in the ternary numeral system; this order is compatible with our conventions on tensor products of matrices). Therefore, given $k,l\in\{-1,0,1\}$ and a vector with $9$ components it makes sense to talk about the $(k,l)$-component of the vector.

It follows from formula \eqref{Gekel} that $\Gamma(e_ke_l)$ is equal to the $(k,l)$-component of the result of matrix multiplication of the matrix
\[
\resizebox{\textwidth}{!}{\ensuremath{\VV=\begin{bmatrix}
\bv_{-1,-1}\bv_{-1,-1}&\overline{\zeta}\bv_{-1,-1}\bv_{-1,0}&\overline{\zeta}^2\bv_{-1,-1}\bv_{-1,1}&\bv_{-1,0}\bv_{-1,-1}&\bv_{-1,0}\bv_{-1,0}&\bv_{-1,0}\bv_{-1,1}&\bv_{-1,1}\bv_{-1,-1}&\zeta\bv_{-1,1}\bv_{-1,0}&\zeta^2\bv_{-1,1}\bv_{-1,1}\\
\zeta\bv_{-1,-1}\bv_{0,-1}&\bv_{-1,-1}\bv_{0,0}&\overline{\zeta}\bv_{-1,-1}\bv_{0,1}&\bv_{-1,0}\bv_{0,-1}&\bv_{-1,0}\bv_{0,0}&\bv_{-1,0}\bv_{0,1}&\overline{\zeta}\bv_{-1,1}\bv_{0,-1}&\bv_{-1,1}\bv_{0,0}&\zeta\bv_{-1,1}\bv_{0,1}\\
\zeta^2\bv_{-1,-1}\bv_{1,-1}&\zeta\bv_{-1,-1}\bv_{1,0}&\bv_{-1,-1}\bv_{1,1}&\bv_{-1,0}\bv_{1,-1}&\bv_{-1,0}\bv_{1,0}&\bv_{-1,0}\bv_{1,1}&\overline{\zeta}^2\bv_{-1,1}\bv_{1,-1}&\overline{\zeta}\bv_{-1,1}\bv_{1,0}&\bv_{-1,1}\bv_{1,1}\\
\bv_{0,-1}\bv_{-1,-1}&\overline{\zeta}\bv_{0,-1}\bv_{-1,0}&\overline{\zeta}^2\bv_{0,-1}\bv_{-1,1}&\bv_{0,0}\bv_{-1,-1}&\bv_{0,0}\bv_{-1,0}&\bv_{0,0}\bv_{-1,1}&\bv_{0,1}\bv_{-1,-1}&\zeta\bv_{0,1}\bv_{-1,0}&\zeta^2\bv_{0,1}\bv_{-1,1}\\
\zeta\bv_{0,-1}\bv_{0,-1}&\bv_{0,-1}\bv_{0,0}&\overline{\zeta}\bv_{0,-1}\bv_{0,1}&\bv_{0,0}\bv_{0,-1}&\bv_{0,0}\bv_{0,0}&\bv_{0,0}\bv_{0,1}&\overline{\zeta}\bv_{0,1}\bv_{0,-1}&\bv_{0,1}\bv_{0,0}&\zeta\bv_{0,1}\bv_{0,1}\\
\zeta^2\bv_{0,-1}\bv_{1,-1}&\zeta\bv_{0,-1}\bv_{1,0}&\bv_{0,-1}\bv_{1,1}&\bv_{0,0}\bv_{1,-1}&\bv_{0,0}\bv_{1,0}&\bv_{0,0}\bv_{1,1}&\overline{\zeta}^2\bv_{0,1}\bv_{1,-1}&\overline{\zeta}\bv_{0,1}\bv_{1,0}&\bv_{0,1}\bv_{1,1}\\
\bv_{1,-1}\bv_{-1,-1}&\overline{\zeta}\bv_{1,-1}\bv_{-1,0}&\overline{\zeta}^2\bv_{1,-1}\bv_{-1,1}&\bv_{1,0}\bv_{-1,-1}&\bv_{1,0}\bv_{-1,0}&\bv_{1,0}\bv_{-1,1}&\bv_{1,1}\bv_{-1,-1}&\zeta\bv_{1,1}\bv_{-1,0}&\zeta^2\bv_{1,1}\bv_{-1,1}\\
\zeta\bv_{1,-1}\bv_{0,-1}
&\bv_{1,-1}\bv_{0,0}
&\overline{\zeta}\bv_{1,-1}\bv_{0,1}
&\bv_{1,0}\bv_{0,-1}
&\bv_{1,0}\bv_{0,0}
&\bv_{1,0}\bv_{0,1}
&\overline{\zeta}\bv_{1,1}\bv_{0,-1}
&\bv_{1,1}\bv_{0,0}
&\zeta\bv_{1,1}\bv_{0,1}\\
\zeta^2\bv_{1,-1}\bv_{1,-1}
&\zeta\bv_{1,-1}\bv_{1,0}
&\bv_{1,-1}\bv_{1,1}
&\bv_{1,0}\bv_{1,-1}
&\bv_{1,0}\bv_{1,0}
&\bv_{1,0}\bv_{1,1}
&\overline{\zeta}^2\bv_{1,1}\bv_{1,-1}
&\overline{\zeta}\bv_{1,1}\bv_{1,0}
&\bv_{1,1}\bv_{1,1}
\end{bmatrix}}}
\]
by the vector
\[
\begin{bmatrix}
e_{-1}e_{-1}\\e_{-1}e_{1}\\e_{-1}e_{0}\\e_{0}e_{-1}\\e_{0}e_{0}\\e_{0}e_{1}\\e_{1}e_{-1}\\e_{1}e_{0}\\e_{1}e_{1}\\
\end{bmatrix}
\]
with the proviso that we apply $\jmath_1$ to elements of the matrix and $\jmath_2$ to components of the vector.
\end{remark}

In what follows, apart from $\sigma=\sqrt{1+|q|^2}$ we will also use the shorthand $\varsigma=|q|^2$.

\begin{proposition}\label{onP0}
There exists a constant $\rho\in\RR$ such that $e_{-1}e_1+\sigma^2e_0^2+{\varsigma}e_1e_{-1}=\rho\I$.
\end{proposition}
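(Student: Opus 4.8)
The plan is to exhibit a specific invariant element in $\mathsf{W}\cdot\mathsf{W}$, namely $p_0=e_{-1}e_1+\sigma^2e_0^2+\varsigma e_1e_{-1}$, and to show that it is fixed by $\Gamma$ in the sense of condition \eqref{simple1}, i.e.~that $\Gamma(p_0)=\jmath_2(p_0)$; then \eqref{simple1} forces $p_0\in\CC\I$, and self-adjointness of $p_0$ will force the scalar to be real. First I would note that $p_0$ is homogeneous of degree $0$ under the $\TT$-action, since $\deg(e_ie_j)=i+j$ and all three summands have $i+j=0$; this is the natural candidate because $(-1)+1=0+0=1+(-1)=0$, so these are exactly the degree-zero products.

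The main computation is to apply formula \eqref{Gekel} to each of the three products $e_{-1}e_1$, $e_0e_0$ and $e_1e_{-1}$, and to verify that the weighted sum collapses. Concretely I would compute
\[
\Gamma(p_0)=\sum_{r,p}\Bigl(\zeta^{r(p-1)}\jmath_1(\bv_{-1,r}\bv_{1,p})+\sigma^2\zeta^{rp}\jmath_1(\bv_{0,r}\bv_{0,p})+\varsigma\,\zeta^{r(p+1)}\jmath_1(\bv_{1,r}\bv_{-1,p})\Bigr)\jmath_2(e_re_p),
\]
and collect, for each fixed pair $(r,p)$, the coefficient multiplying $\jmath_2(e_re_p)$. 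The claim is that this coefficient equals $\delta_{r,-p}\cdot\bigl(\text{the }(r,p)\text{ weight appearing in }p_0\text{ itself}\bigr)$, so that $\Gamma(p_0)=\jmath_1(\I)\jmath_2(p_0)=\jmath_2(p_0)$. In other words, the off-diagonal contributions (those with $r+p\neq 0$) must cancel, and the surviving diagonal contributions with $r+p=0$ must reassemble into $\jmath_1(\I)$ times the original combination. This reduces to a finite set of identities among the entries $\bv_{i,j}$ of $\bV$, which are explicit quadratic expressions in $\alpha,\gamma,\gamma^*$; these follow from the defining relations \eqref{SUq2rel} together with \eqref{add}. The cleanest bookkeeping device is Remark \ref{bigV}: the vector of products $\jmath_2(e_re_p)$ is acted on by the $9\times 9$ matrix $\VV$, and $p_0$ corresponds to a particular linear functional on the nine components; I would verify that the relevant row-combination of $\VV$ with weights $(1,\sigma^2,\varsigma)$ on the $(-1,1),(0,0),(1,-1)$ entries produces $\jmath_1(\I)$ times that same functional.

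The hard part will be the algebraic verification that the quadratic combinations of the $\bv_{i,j}$ actually telescope to a multiple of $\I$; this is where the precise values of $\sigma^2=1+\varsigma$ and $\varsigma=|q|^2$, and the braiding phases $\zeta$, must conspire. I expect the invariance to hinge on the two sphere-type relations $\alpha^*\alpha+\gamma^*\gamma=\I$ and $\alpha\alpha^*+|q|^2\gamma^*\gamma=\I$, which are exactly what let the degree-zero products of matrix entries sum to the identity (this is the manifestation of unitarity of $\bV$, since $p_0$ is essentially a matrix coefficient of the invariant vector spanning the trivial subrepresentation of $\bV\,\text{\raisebox{0.8pt}{\scriptsize$\circledtop$}}\,\bV$). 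Once $\Gamma(p_0)=\jmath_2(p_0)$ is established, condition \eqref{simple1} gives $p_0=\rho\I$ for some $\rho\in\CC$; and since $e_i^*=e_{-i}$ gives $p_0^*=e_{-1}e_1+\sigma^2e_0^2+\varsigma e_1e_{-1}$ after using $\overline{\sigma^2}=\sigma^2$, $\overline{\varsigma}=\varsigma$ and reordering—or more directly since $p_0$ is a scalar multiple of $\I$ and a short check shows $p_0^*=p_0$—the constant $\rho$ is forced to be real.
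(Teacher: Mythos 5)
Your proposal follows essentially the same route as the paper: apply formula \eqref{Gekel} (equivalently, the row-vector-times-$\VV$ bookkeeping of Remark \ref{bigV}) to show that the row combination with weights $1,\sigma^2,\varsigma$ in the $(-1,1),(0,0),(1,-1)$ slots reproduces itself with $\jmath_1(\I)$, so that $\Gamma(p_0)=\jmath_2(p_0)$, and then invoke condition \eqref{simple1} plus self-adjointness to get $p_0=\rho\I$ with $\rho\in\RR$. The paper likewise defers the explicit $81$-entry verification to the computations of Appendix \ref{vv}, so your outline is a faithful match.
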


\begin{proof}
By Remark \ref{bigV} the quantity $\Gamma\bigl(\sum\lambda^{k,l}e_ke_l\bigr)$ is the matrix product
\[
\begin{bmatrix}\lambda^{-1,-1}&\lambda^{-1,0}&\dotsm&\lambda^{1,0}&\lambda^{1,1}\end{bmatrix}
\bigl((\id_{M_9}\tens\jmath_1)\VV\bigr)
\begin{bmatrix}
\jmath_2(e_{-1}e_{-1})\\\jmath_2(e_{-1}e_{1})\\\vdots\\\jmath_2(e_{1}e_{0})\\\jmath_2(e_{1}e_{1})\\
\end{bmatrix}.
\]
Using the calculations in Section \ref{vv} we immediately find that
\[
\begin{bmatrix}
0&0&1&0&\sigma^2&0&\varsigma&0&0
\end{bmatrix}
\VV=
\begin{bmatrix}
0&0&\I&0&\sigma^2\I&0&\varsigma\I&0&0
\end{bmatrix}
\]
which shows that the element $x=e_{-1}e_1+\sigma^2e_0^2+{\varsigma}e_1e_{-1}$ satisfies $\Gamma(x)=\jmath_2(x)$. Therefore, by condition \eqref{simple1}, $x$ must be proportional to $\I$, i.e.~there is a constant $\rho$ such that $x=\rho\I$. As $x$ is self-adjoint, we obtain $\rho\in\RR$.
\end{proof}

\begin{proposition}\label{onPi}
Let
\[
P_{-1}=\sigma^2e_{-1}e_0-\varsigma\sigma^2e_0e_{-1},\quad
P_0=\varsigma(e_1e_{-1}-e_{-1}e_1)+(1-\varsigma^2)e_0^2,\quad
P_1=\sigma^2e_0e_1-\varsigma\sigma^2e_1e_0.
\]
Then
\begin{equation}\label{act_on_P}
\Gamma(P_i)=\sum_k\jmath_1(\bv_{i,k})\jmath_2(P_k),\qquad{i}=-1,0,1.
\end{equation}
\end{proposition}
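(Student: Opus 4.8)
The plan is to compute $\Gamma(P_i)$ directly through the quadratic formula \eqref{Gekel}, packaged in the $9\times 9$ matrix $\VV$ of Remark \ref{bigV}, and to recognise the claim as a single scalar intertwining identity. Each $P_i$ is a linear combination $P_i=\sum_{k,l}\Lambda_{i,(k,l)}\,e_ke_l$, so I collect the coefficients into a scalar $3\times 9$ matrix $\Lambda$ whose rows are indexed by $i\in\{-1,0,1\}$ and whose columns are indexed by the pairs $(k,l)$ in the ordering of Remark \ref{bigV}. Reading off the definitions, the row $i=-1$ is supported on $(-1,0)$ and $(0,-1)$ with entries $\sigma^2$ and $-\varsigma\sigma^2$; the row $i=0$ on $(-1,1),(0,0),(1,-1)$ with entries $-\varsigma,\,1-\varsigma^2,\,\varsigma$; and the row $i=1$ on $(0,1),(1,0)$ with entries $\sigma^2,-\varsigma\sigma^2$. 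In particular every row $i$ is supported on pairs with $k+l=i$, so each $P_i$ is homogeneous of degree $i$, matching the degrees of $\bv_{i,\cdot}$.

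With this notation Remark \ref{bigV} gives $\Gamma(P_i)=\sum_{(r,p)}\jmath_1\bigl((\Lambda\VV)_{i,(r,p)}\bigr)\jmath_2(e_re_p)$, whereas the right-hand side of \eqref{act_on_P} equals $\sum_{(r,p)}\jmath_1\bigl((\bV\Lambda)_{i,(r,p)}\bigr)\jmath_2(e_re_p)$, the scalar entries of $\Lambda$ passing freely through $\jmath_1$ and $\jmath_2$. Hence \eqref{act_on_P} is implied by the purely algebraic identity
\[
\Lambda\,\VV=\bV\,\Lambda
\]
of $3\times 9$ matrices over $\C(\SUq)$, which says precisely that the Clebsch--Gordan combinations encoded by $\Lambda$ intertwine the tensor-square representation $\VV$ with $\bV$. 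Note that no linear independence of the products $e_re_p$ is needed: once $\Lambda\VV=\bV\Lambda$ holds the two sides of \eqref{act_on_P} agree term by term against $\jmath_2(e_re_p)$.

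To verify $\Lambda\VV=\bV\Lambda$ I would compare the two sides entry by entry. The $\TT$-grading organises the computation: for each fixed $(i,(r,p))$ the entry $(\Lambda\VV)_{i,(r,p)}=\sum_{k+l=i}\Lambda_{i,(k,l)}\,\zeta^{r(p-l)}\bv_{k,r}\bv_{l,p}$ is homogeneous of degree $(r+p)-i$, and so is $(\bV\Lambda)_{i,(r,p)}$, whose only contribution comes from $\bv_{i,r+p}$; thus only finitely many scalar identities survive, each asserting that a weighted sum of products $\bv_{k,r}\bv_{l,p}$ collapses onto a single multiple of $\bv_{i,r+p}$ (or onto $0$ when $r+p\in\{-2,2\}$). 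Every such product, rewritten in terms of $\alpha$ and $\gamma$ by means of the relations \eqref{SUq2rel} and \eqref{add}, is exactly of the type assembled in Appendix \ref{vv} and already used to prove Proposition \ref{onP0}.

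The main obstacle is thus not conceptual but the bookkeeping of the braiding phases: one must track carefully how the factor $\zeta^{r(p-l)}$ attached to $e_re_p$ in $\Gamma(e_ke_l)$ combines with the $\zeta$-twists already present in $\VV$, and how the noncommutativity of $\C(\SUq)$ enters when the products $\bv_{k,r}\bv_{l,p}$ are reordered. Once the relevant products are tabulated as in Appendix \ref{vv}, each of the finitely many degree-homogeneous identities is a short check, and assembling them over all $(i,(r,p))$ yields $\Lambda\VV=\bV\Lambda$ and hence \eqref{act_on_P}.
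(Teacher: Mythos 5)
Your proposal is correct and is essentially the paper's own argument: the paper likewise encodes the coefficients of $P_{-1},P_0,P_1$ as three rows of a scalar $3\times 9$ matrix and checks, using the products tabulated in Appendix \ref{vv}, that multiplying these rows by $\VV$ reproduces the corresponding combinations of the $\bv_{i,k}$ — exactly your intertwining identity $\Lambda\VV=\bV\Lambda$, from which \eqref{act_on_P} follows via Remark \ref{bigV} just as you say. The only difference is presentational: the paper records the outcome of the three row-by-matrix products explicitly, while you describe the (equivalent) degree-by-degree verification scheme.
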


\begin{proof}
Using expression of matrix elements of $\VV$ as linear combinations of $\bigl\{a_{n,k,l}\}_{n\in\ZZ,k,l\in\ZZ_+}$ as well as the fact that
\[
\bV=
\begin{bmatrix}
\bv_{-1,-1}&\bv_{-1,0}&\bv_{-1,1}\\
\bv_{0,-1}&\bv_{0,0}&\bv_{0,1}\\
\bv_{1,-1}&\bv_{1,0}&\bv_{1,1}
\end{bmatrix}
=
\begin{bmatrix}
a_{2,0,0}&-q^{-1}\sigma^2a_{1,0,1}&-qa_{0,0,2}\\
{\zeta}a_{1,1,0}&\I-\sigma^2a_{0,1,1}&qa_{-1,0,1}\\
-q{\zeta}a_{0,2,0}&-\sigma^2a_{-1,1,0}&a_{-2,0,0}
\end{bmatrix}
\]
we find that with
\[
A=\sigma^2,{\quad}
B=-\varsigma,{\quad}
C=-\varsigma\sigma^2,{\quad}
D=1-\varsigma^2,{\quad}
E=\sigma^2,{\quad}
F=\varsigma,{\quad}
G=-\varsigma\sigma^2
\]
\begin{itemize}
\item the product $\begin{bmatrix}0&A&0&C&0&0&0&0&0\end{bmatrix}\VV$ is
\[
\begin{bmatrix}0&A\bv_{-1,-1}&B\bv_{-1,0}&C\bv_{-1,-1}&D\bv_{-1,0}&E\bv_{-1,1}&F\bv_{-1,0}&G\bv_{-1,1}&0\end{bmatrix},
\]
\item the product $\begin{bmatrix}0&0&B&0&D&0&F&0&0\end{bmatrix}\VV$ is
\[
\begin{bmatrix}0&A\bv_{0,-1}&B\bv_{0,0}&C\bv_{0,-1}&D\bv_{0,0}&E\bv_{0,1}&F\bv_{0,0}&G\bv_{0,1}&0\end{bmatrix},
\]
\item the product $\begin{bmatrix}0&0&0&0&0&E&0&G&0\end{bmatrix}\VV$ is
\[
\begin{bmatrix}0&A\bv_{1,-1}&B\bv_{1,0}&C\bv_{1,-1}&D\bv_{1,0}&E\bv_{1,1}&F\bv_{1,0}&G\bv_{1,1}&0\end{bmatrix},
\]
\end{itemize}
which in view of Remark \ref{bigV} (cf.~the proof of Proposition \ref{onP0}) is exactly \eqref{act_on_P}.
\end{proof}

\begin{corollary}
There exists a real number $\lambda$ such that $P_i={\lambda}e_i$ for $i=-1,0,1$.
\end{corollary}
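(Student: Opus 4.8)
The plan is to read Proposition~\ref{onPi} as the statement that the linear map $\Theta\colon\mathsf{W}\to\C(\XX)$ determined by $\Theta(e_i)=P_i$ intertwines the coaction carried by $\mathsf{W}$ with $\Gamma$. Indeed, setting $\mathsf{P}=\operatorname{span}\{P_{-1},P_0,P_1\}$, applying $\Gamma$ to a linear relation $\sum_i\xi_iP_i=0$ and using that $\Gamma$ is a homomorphism shows that $P_i\mapsto\sum_k\jmath_1(\bv_{i,k})\jmath_2(P_k)$ is a well-defined coaction on $\mathsf{P}$, and \eqref{act_on_P} says precisely that $\Theta$ is equivariant for this coaction and the one on $\mathsf{W}$ given by condition \eqref{simple2}. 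Since by Section~\ref{irr} the representation $\bV$ is irreducible, the comodule $\mathsf{W}$ has no proper nonzero invariant subspace and $\operatorname{End}(\mathsf{W})=\CC\,\id$.

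First I would run the usual Schur dichotomy for $\Theta$. Its kernel is an invariant subspace of $\mathsf{W}$: for $x\in\ker\Theta$ one has $(\id\btens_\zeta\Theta)\bigl(\Gamma(x)\bigr)=\Gamma(\Theta x)=0$, and identifying $\bigl[\jmath_1(\C(\SUq))\jmath_2(\mathsf{W})\bigr]$ with $\C(\SUq)\tens\mathsf{W}$ one reads off that $\Gamma(x)$ lies in $\bigl[\jmath_1(\C(\SUq))\jmath_2(\ker\Theta)\bigr]$, i.e.\ $\ker\Theta$ is a subcomodule. By irreducibility $\ker\Theta$ is either $\mathsf{W}$, in which case $\Theta=0$ and all $P_i=0$ so that $\lambda=0$ works, or $\{0\}$, in which case $\Theta$ is injective. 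In the latter case $\mathsf{P}=\Theta(\mathsf{W})$ is a three-dimensional subspace whose basis $\{P_{-1},P_0,P_1\}$ transforms according to $\bV$, so condition \eqref{simple2.5} forces $\mathsf{P}=\mathsf{W}$; then $\Theta$ is an endomorphism of the comodule $\mathsf{W}$ and hence a scalar $\lambda\,\id$ by Section~\ref{irr} (equivalently one may invoke Proposition~\ref{uniquebasis} directly). Either way $P_i=\lambda e_i$ for all $i$.

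It remains to see that $\lambda$ is real. Here I would use the normalisation $e_i^*=e_{-i}$ arranged before Proposition~\ref{onP0}: the three elements $e_1e_{-1}$, $e_{-1}e_1$ and $e_0^2$ are then self-adjoint and the coefficients $\varsigma$, $1-\varsigma^2$ are real, so $P_0^*=P_0$. Taking adjoints in $P_0=\lambda e_0$ gives $\lambda e_0=P_0=P_0^*=\overline{\lambda}\,e_0^*=\overline{\lambda}\,e_0$, and since $e_0$ is a nonzero basis vector we conclude $\lambda=\overline{\lambda}\in\RR$ (with $\lambda=0$ in the degenerate case, which is trivially real).

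The hard part will be the bookkeeping hidden in the intertwiner argument, namely the faithfulness of the braided tensor product on the finite-dimensional comodule $\mathsf{W}$ that is needed to identify $\ker(\id\btens_\zeta\Theta)$ with $\bigl[\jmath_1(\C(\SUq))\jmath_2(\ker\Theta)\bigr]$ and thereby legitimise the claim that $\ker\Theta$ is a subcomodule. This rests on the fact that $\jmath_1(a)\jmath_2(e_k)\leftrightarrow a\tens e_k$ is a linear isomorphism onto $\C(\SUq)\tens\mathsf{W}$, which follows from the explicit model of $\btens_\zeta$ inside $\M(\C(\TT^2_\zeta)\tens\C(\SUq)\tens\C(\XX))$ recalled in Section~\ref{braids} together with injectivity of $\jmath_1$ and $\jmath_2$. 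Once this is in place the remainder is the routine Schur/irreducibility reasoning above, and no computation with the defining relations of $\C(\SUq)$ is required.
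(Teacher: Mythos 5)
Your proof is correct and ultimately rests on the same three inputs as the paper's: Proposition \ref{onPi} (the $P_i$ transform according to $\bV$), condition \eqref{simple2.5} together with Proposition \ref{uniquebasis}, and self-adjointness of $P_0$ and $e_0$ to force $\lambda\in\RR$. The paper's own proof is a two-line appeal to exactly these facts. What you add is an explicit Schur-type treatment of the degenerate possibility that the $P_i$ fail to be linearly independent: the paper silently assumes $\operatorname{span}\{P_{-1},P_0,P_1\}$ is three-dimensional before invoking \eqref{simple2.5}, whereas you observe that $\ker\Theta$ is a subcomodule of $\mathsf{W}$ and hence either everything vanishes (so $\lambda=0$) or $\Theta$ is injective. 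That closes a genuine, if small, gap in the paper's one-liner. Two caveats on your version: first, the ``bookkeeping'' you flag --- identifying $\bigl[\jmath_1(\C(\SUq))\jmath_2(\mathsf{W})\bigr]$ with $\C(\SUq)\tens\mathsf{W}$ so that $\ker(\id\btens_\zeta\Theta)=\bigl[\jmath_1(\C(\SUq))\jmath_2(\ker\Theta)\bigr]$ --- is most cheaply done by the paper's own device in the proof of Proposition \ref{uniquebasis}, namely applying the auxiliary action $\varrho$ of $\TT$ on the second leg and integrating against characters, which separates the components because the $e_k$ (and the $P_k$) have distinct degrees; second, Section \ref{irr} literally proves that the commutant of $\bV$ is trivial, and passing from that to ``no proper nonzero invariant subspace'' uses unitarity of $\bV$ (an invariant subspace of a unitary corepresentation yields a commuting projection), a step worth making explicit. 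With those two points spelled out, your argument is a slightly more careful rendering of the paper's.
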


\begin{proof}
It follows immediately from condition \eqref{simple2.5} and the uniqueness of the basis $\{e_{-1},e_0,e_1\}$ described in Proposition \ref{uniquebasis} that there is a $\lambda\in\CC$ such that $P_i=\lambda{e_i}$ for all $i$. Since $P_0$ and $e_0$ are self-adjoint, we must have $\lambda\in\RR$.
\end{proof}

\section{Braided Podle\'s spheres}\label{bps}

\begin{theorem}
Let $\XX_{q,\rho,\lambda}$ be the compact quantum space such that $\C(\XX_{q,\rho,\lambda})$ is the universal unital \cst-algebra generated by three elements $e_{-1},e_0,e_1$ with relations ${e_i}^*=e_{-i}$ for all $i$ and
\begin{align*}
e_{-1}e_1+\sigma^2e_0^2+{\varsigma}e_1e_{-1}&=\rho\I,\\
\sigma^2e_{-1}e_0-\varsigma\sigma^2e_0e_{-1}&={\lambda}e_{-1},\\
\varsigma(e_1e_{-1}-e_{-1}e_1)+(1-\varsigma^2)e_0^2&={\lambda}e_0,\\
\sigma^2e_0e_1-\varsigma\sigma^2e_1e_0&={\lambda}e_1.
\end{align*}
Then
\begin{enumerate}
\item\label{TactionXX} there is an action of $\TT$ on $\C(\XX_{q,\rho,\lambda})$ such that $e_i$ is of degree $i$,
\item\label{actionXX} there exists $\Gamma_{q,\rho,\lambda}\in\Mor_\TT(\C(\XX_{q,\rho,\lambda}),\C(\SUq)\btens_\zeta\C(\XX_{q,\rho,\lambda}))$ such that
\[
\Gamma_{q,\rho,\lambda}(e_i)=\sum_{k=-1}^1\jmath_1(\bv_{i,k})\jmath_2(e_k),\qquad{i}=-1,0,1,
\]
\item\label{gestoscXX} $\Gamma_{q,\rho,\lambda}$ satisfies $(\id\btens_\zeta\Gamma_{q,\rho,\lambda})\comp\Gamma_{q,\rho,\lambda}=(\DSUq\btens_\zeta\id)\comp\Gamma_{q,\rho,\lambda}$ and we have
\[
\bigl[\jmath_1(\C(\SUq))\Gamma_{q,\rho,\lambda}(\C(\XX_{q,\rho,\lambda}))\bigr]=\C(\SUq)\btens_\zeta\C(\XX_{q,\rho,\lambda}),
\]
\item\label{PodlesXX} if $\lambda'=\tfrac{\lambda}{\sqrt{\varsigma}\sigma^2}$ and $\rho'=\tfrac{\rho}{\varsigma\sigma^2}$ then $\C(\XX_{q,\rho,\lambda})$ is isomorphic to the \cst-algebra $\C(X_{|q|\lambda'\rho'})$ defined by Podle\'s in \cite[Section 3]{spheres}.
\end{enumerate}
\end{theorem}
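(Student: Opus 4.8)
The plan is to establish the four assertions in turn, in each case reusing the braided computations of Section \ref{simplespaces}, only now read backwards: there an abstract space produced the relations, here the relations are imposed and must be checked against the intended structure. For \eqref{TactionXX} I would first note that each defining relation is homogeneous once we set $\deg(e_i)=i$: every summand of $e_{-1}e_1+\sigma^2e_0^2+\varsigma e_1e_{-1}$ has degree $0$ (matching $\rho\I$), while the three remaining relations have all terms of degree $-1,0,1$ respectively (matching $\lambda e_{-1},\lambda e_0,\lambda e_1$). Hence by the universal property the assignment $e_i\mapsto\bz^i\tens e_i$ extends to a morphism $\rho^{\C(\XX_{q,\rho,\lambda})}\in\Mor(\C(\XX_{q,\rho,\lambda}),\C(\TT)\tens\C(\XX_{q,\rho,\lambda}))$, and the coaction identity with $\Delta_\TT$ is verified on generators. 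The universal \cst-algebra exists because, using ${e_i}^*=e_{-i}$, the relation reads $e_1^*e_1+\sigma^2e_0^2+\varsigma e_1e_1^*=\rho\I$, a sum of positive elements with positive coefficients, so each generator has norm at most $\sqrt{\rho}$.

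For \eqref{actionXX} I would set $E_i=\sum_k\jmath_1(\bv_{i,k})\jmath_2(e_k)\in\C(\SUq)\btens_\zeta\C(\XX_{q,\rho,\lambda})$ and check that $E_{-1},E_0,E_1$ satisfy the defining relations; existence of $\Gamma_{q,\rho,\lambda}$ then follows from universality. The relation $E_i^*=E_{-i}$ is exactly the adjoint computation carried out in the proof that ${e_i}^*$ is proportional to $e_{-i}$ (now with ${e_i}^*=e_{-i}$), which uses only the adjoint relations among the $\bv_{i,j}$ listed before \eqref{degrees} and the braiding \eqref{braid}. For the remaining three relations I would observe that the identities in Propositions \ref{onP0} and \ref{onPi} are purely formal manipulations of the products $E_kE_l$ through \eqref{Gekel}, giving verbatim $E_{-1}E_1+\sigma^2E_0^2+\varsigma E_1E_{-1}=\jmath_2\bigl(e_{-1}e_1+\sigma^2e_0^2+\varsigma e_1e_{-1}\bigr)$ and $\sigma^2E_{-1}E_0-\varsigma\sigma^2E_0E_{-1}=\sum_k\jmath_1(\bv_{-1,k})\jmath_2(P_k)$, and likewise for the indices $0,1$. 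Substituting the defining relations $e_{-1}e_1+\sigma^2e_0^2+\varsigma e_1e_{-1}=\rho\I$ and $P_k=\lambda e_k$ turns the right-hand sides into $\rho\I$ and $\lambda E_i$, which are precisely the relations required of the $E_i$. Since $\deg(\bv_{i,k})=i-k$, each $E_i$ is homogeneous of degree $(i-k)+k=i$, so $\Gamma_{q,\rho,\lambda}$ is automatically $\TT$-equivariant and lies in $\Mor_\TT$.

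For \eqref{gestoscXX} coassociativity is checked on the $e_i$: expanding both $(\id\btens_\zeta\Gamma_{q,\rho,\lambda})\Gamma_{q,\rho,\lambda}(e_i)$ and $(\DSUq\btens_\zeta\id)\Gamma_{q,\rho,\lambda}(e_i)$ inside the triple product $\C(\SUq)\btens_\zeta\C(\SUq)\btens_\zeta\C(\XX_{q,\rho,\lambda})$ produces $\sum_{k,p}\jmath_1(\bv_{i,k})\jmath_2(\bv_{k,p})\jmath_3(e_p)$ on both sides, the equality being exactly the representation property \eqref{V_is_a_rep}. For the density condition I would work with the closed left $\jmath_1(\C(\SUq))$-module $\mathcal{M}=\bigl[\jmath_1(\C(\SUq))\Gamma_{q,\rho,\lambda}(\C(\XX_{q,\rho,\lambda}))\bigr]$, which satisfies $\mathcal{M}\,\Gamma_{q,\rho,\lambda}(\C(\XX_{q,\rho,\lambda}))\subset\mathcal{M}$ because $\Gamma_{q,\rho,\lambda}$ is multiplicative. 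Since $\bV$ is similar (via the scalar diagonal matrix of Section \ref{theV}) to the unitary $\boldsymbol{W}$, it is invertible in $M_3(\C(\SUq))$, so inverting the relation $\Gamma_{q,\rho,\lambda}(e_i)=\sum_k\jmath_1(\bv_{i,k})\jmath_2(e_k)$ gives $\jmath_2(e_k)\in\mathcal{M}$. I would then promote this to $\jmath_2(\C(\XX_{q,\rho,\lambda}))\subset\mathcal{M}$ by induction on word length in the $e_i$: for a word $u$ of length $m$ with $\jmath_2(u)\in\mathcal{M}$, the elements $\jmath_2(u)\Gamma_{q,\rho,\lambda}(e_l)\in\mathcal{M}$ unfold via \eqref{braid} into $\sum_p\overline{\zeta}^{\,\deg(u)(l-p)}\jmath_1(\bv_{l,p})\jmath_2(ue_p)$, and as $l$ ranges over $\{-1,0,1\}$ the coefficient matrix $\bigl[\overline{\zeta}^{\,\deg(u)(l-p)}\bv_{l,p}\bigr]$ is $\bV$ conjugated by unimodular diagonal matrices, hence invertible, so each $\jmath_2(ue_p)\in\mathcal{M}$. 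Once $\jmath_2(\C(\XX_{q,\rho,\lambda}))\subset\mathcal{M}$, the module property yields $\bigl[\jmath_1(\C(\SUq))\jmath_2(\C(\XX_{q,\rho,\lambda}))\bigr]=\C(\SUq)\btens_\zeta\C(\XX_{q,\rho,\lambda})\subset\mathcal{M}$, the reverse inclusion being trivial. I expect this density step to be the main obstacle, precisely because the braiding blocks the naive left-module argument and forces the inductive use of invertibility of $\bV$.

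Finally, for \eqref{PodlesXX} I would recall the generators-and-relations presentation of Podle\'s's sphere from \cite[Section 3]{spheres} in the spin-$1$ basis, after the flip conjugation already noted in Section \ref{theV} when comparing $\bV$ with Podle\'s's matrix, and exhibit an explicit rescaling $e_i\mapsto c_ie_i$ compatible with ${e_i}^*=e_{-i}$ that carries the four relations above into Podle\'s's relations. Matching the coefficients $\sigma^2,\varsigma,1-\varsigma^2$ and the right-hand constants $\rho,\lambda$ against Podle\'s's normalization forces $\lambda'=\tfrac{\lambda}{\sqrt{\varsigma}\,\sigma^2}$ and $\rho'=\tfrac{\rho}{\varsigma\sigma^2}$, after which the isomorphism of the two universal \cst-algebras is formal, since a bijective rescaling of generators sends defining relations to defining relations. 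This part is essentially bookkeeping; the only genuine care needed is in reconciling Podle\'s's sign and normalization conventions and the passage from the complex parameter $q$ to the real parameter $|q|$.
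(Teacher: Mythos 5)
Your proposal is correct and follows essentially the same route as the paper for all four parts: homogeneity of the relations for the $\TT$-action, verification that $E_i=\sum_k\jmath_1(\bv_{i,k})\jmath_2(e_k)$ satisfy the defining relations by reusing the formal $\VV$-computations of Propositions \ref{onP0} and \ref{onPi}, coassociativity via \eqref{V_is_a_rep}, and the rescaling to Podle\'s's presentation. Your treatment of the density condition is in fact slightly more careful than the paper's: you correctly invoke invertibility of $\bV$ (it is only similar to the unitary $\boldsymbol{W}$, not itself unitary) and you supply the induction on word length that the paper compresses into ``and hence''.
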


\begin{proof}
Ad \eqref{TactionXX}. It is obvious that for any $z\in\TT$ the elements $\overline{z}e_{-1},e_0,ze_1$ satisfy the defining relations of $\C(\XX_{q,\rho,\lambda})$, so by universality there exists an automorphism of $\C(\XX_{q,\rho,\lambda})$ mapping $e_i$ to $z^{i}e_i$. Standard argument shows that this yields an continuous action of $\TT$ on $\C(\XX_{q,\rho,\lambda})$ as desired.

Ad \eqref{actionXX}. In order to show that $\Gamma_{q,\rho,\lambda}$ exists we need to prove that elements $P_{-1},P_0,P_1\in\C(\SUq)\btens_\zeta\C(\XX_{q,\rho,\lambda})$ defined by
\[
P_i=\sum_{k=-1}^1\jmath_1(\bv_{i,k})\jmath_2(e_k),\qquad{i}=-1,0,1
\]
satisfy ${P_i}^*=P_{-i}$ for all $i$ and
\begin{subequations}
\begin{align}
P_{-1}P_1+\sigma^2P_0^2+{\varsigma}P_1P_{-1}&=\rho\I,\label{P}\\
\sigma^2P_{-1}P_0-\varsigma\sigma^2P_0P_{-1}&={\lambda}P_{-1},\label{P-1}\\
\varsigma(P_1P_{-1}-P_{-1}P_1)+(1-\varsigma^2)P_0^2&={\lambda}P_0,\label{P0}\\
\sigma^2P_0P_1-\varsigma\sigma^2P_1P_0&={\lambda}P_1.\label{P1}
\end{align}
\end{subequations}

The left-hand sides of the above equations can be calculated using the commutation relation \eqref{braid}, e.g.
{\allowdisplaybreaks
\begin{align*}
P_{-1}P_1+\sigma^2P_0^2+{\varsigma}P_1P_{-1}
&=
\biggl(\sum_{r=-1}^1\jmath_1(\bv_{-1,r})\jmath_2(e_r)\biggr)\biggl(\sum_{p=-1}^1\jmath_1(\bv_{1,p})\jmath_2(e_p)\biggr)\\
&\quad+\sigma^2\biggl(\sum_r\jmath_1(\bv_{0,r})\jmath_2(e_r)\biggr)\biggl(\sum_p\jmath_1(\bv_{0,p})\jmath_2(e_p)\biggr)\\
&\quad+\varsigma\biggl(\sum_r\jmath_1(\bv_{1,r})\jmath_2(e_r)\biggr)\biggl(\sum_p\jmath_1(\bv_{-1,p})\jmath_2(e_p)\biggr)\\
&=\sum_{r,p}\zeta^{r(p-1)}\jmath_1(\bv_{-1,r}\bv_{1,p})\jmath_2(e_re_p)\\
&\quad+\sigma^2\sum_{r,p}\zeta^{rp}\jmath_1(\bv_{0,r}\bv_{0,p})\jmath_2(e_re_p)\\
&\quad+\varsigma\sum_{r,p}\zeta^{r(p+1)}\jmath_1(\bv_{1,r}\bv_{-1,p})\jmath_2(e_re_p).
\end{align*}
}The right-hand side of the above can be written in the short form of:
\[
\begin{bmatrix}
0&0&1&0&\sigma^2&0&\varsigma&0&0
\end{bmatrix}
\bigl((\id_{M_9}\tens\jmath_1)\VV\bigr)
\begin{bmatrix}
\jmath_2(e_{-1}e_{-1})\\
\jmath_2(e_{-1}e_{0})\\
\jmath_2(e_{-1}e_{1})\\
\jmath_2(e_{0}e_{-1})\\
\jmath_2(e_{0}e_{0})\\
\jmath_2(e_{0}e_{1})\\
\jmath_2(e_{1}e_{-1})\\
\jmath_2(e_{1}e_{0})\\
\jmath_2(e_{1}e_{1})
\end{bmatrix}
\]
which, as in the proof of Proposition \ref{onP0}, is
\[
\begin{bmatrix}
0&0&\I&0&\sigma^2\I&0&\varsigma\I&0&0
\end{bmatrix}
\begin{bmatrix}
\jmath_2(e_{-1}e_{-1})\\
\jmath_2(e_{-1}e_{0})\\
\jmath_2(e_{-1}e_{1})\\
\jmath_2(e_{0}e_{-1})\\
\jmath_2(e_{0}e_{0})\\
\jmath_2(e_{0}e_{1})\\
\jmath_2(e_{1}e_{-1})\\
\jmath_2(e_{1}e_{0})\\
\jmath_2(e_{1}e_{1})
\end{bmatrix}=\jmath_2(e_{-1}e_1+\sigma^2e_0^2+{\varsigma}e_1e_{-1})=\rho\I
\]
and this means that relation \eqref{P} holds.

Similarly the calculations performed in the proof of Proposition \ref{onPi} show that
\begin{align*}
\sigma^2P_{-1}P_0&-\varsigma\sigma^2P_0P_{-1}=\begin{bmatrix}0&\sigma^2&0&-\varsigma\sigma^2&0&0&0&0&0\end{bmatrix}\bigl((\id_{M_9}\tens\jmath_1)\VV\bigr)
\begin{bmatrix}
\jmath_2(e_{-1}e_{-1})\\
\jmath_2(e_{-1}e_{0})\\
\jmath_2(e_{-1}e_{1})\\
\jmath_2(e_{0}e_{-1})\\
\jmath_2(e_{0}e_{0})\\
\jmath_2(e_{0}e_{1})\\
\jmath_2(e_{1}e_{-1})\\
\jmath_2(e_{1}e_{0})\\
\jmath_2(e_{1}e_{1})
\end{bmatrix}
\end{align*}
is the product of the row matrix
\[
\resizebox{\textwidth}{!}{\ensuremath{
\begin{bmatrix}0&\sigma^2\jmath_1(\bv_{-1,-1})&-\varsigma\jmath_1(\bv_{-1,0})&-\varsigma\sigma^2\jmath_1(\bv_{-1,-1})&(1-\varsigma^2)\jmath_1(\bv_{-1,0})&\sigma^2\jmath_1(\bv_{-1,1})&\varsigma\jmath_1(\bv_{-1,0})&-\varsigma\sigma^2\jmath_1(\bv_{-1,1})&0\end{bmatrix}
}}
\]
by the column vector $\left[\begin{smallmatrix}\jmath_2(e_{-1}e_{-1})\\\vdots\\\jmath_2(e_{1}e_{1})\end{smallmatrix}\right]$. This we immediately find to be
\begin{align*}
\jmath_1(\bv_{-1,-1})\jmath_2(\sigma^2e_{-1}e_0-\varsigma\sigma^2e_0e_{-1})
&+\jmath_1(\bv_{-1,0})\jmath_2\bigl(-{\varsigma}e_{-1}e_1+(1-\varsigma^2)e_0^2+{\varsigma}e_1e_{-1}\bigr)\\&\quad+\jmath_1(\bv_{-1,1})\jmath_2(\sigma^2e_0e_1-\varsigma\sigma^2e_1e_0)\\
&=\jmath_1(\bv_{-1,-1})\jmath_2({\lambda}e_{-1})+\jmath_1(\bv_{-1,0})\jmath_2({\lambda}e_0)+\jmath_1(\bv_{-1,1})\jmath_2({\lambda}e_1)\\
&={\lambda}P_{-1},
\end{align*}
which proves \eqref{P-1}.

Relations \eqref{P0} and \eqref{P1} are verified in an analogous manner using the remaining equalities from the proof of Proposition \ref{onPi}. Finally it is clear that ${P_i}^*=P_{-i}$ for all $i$.

Ad \eqref{gestoscXX}. Applying $(\id\btens_\zeta\Gamma_{q,\rho,\lambda})\comp\Gamma_{q,\rho,\lambda}$ to a generator $e_i$ we obtain
{\allowdisplaybreaks
\begin{align*}
(\id\btens_\zeta\Gamma_{q,\rho,\lambda})\Gamma_{q,\rho,\lambda}(e_i)&=(\id\btens_\zeta\Gamma_{q,\rho,\lambda})\biggl(\,\sum_{k=-1}^1\jmath_1(\bv_{i,k})\jmath_2(e_k)\biggr)\\
&=\sum_{k=-1}^1\jmath_1(\bv_{i,k})\biggl(\,\sum_{l=-1}^1\jmath_2(\bv_{k,l})\jmath_3(e_l)\biggr)\\
&=\sum_{l=-1}^1\biggl(\,\sum_{k=-1}^1\jmath_1(\bv_{i,k})\jmath_2(\bv_{k,l}\biggr)\jmath_3(e_l)\\
&=\sum_{l=-1}^1\jmath_1\bigl(\DSUq(\bv_{i,l})\bigr)\jmath_2(e_l)=(\DSUq\btens_\zeta\id)\Gamma_{q,\rho,\lambda}(e_i),
\end{align*}
}where $\jmath_1,\jmath_2,\jmath_3$ in the second and third line are the canonical inclusions of factors in the triple braided product $\C(\SUq)\btens_\zeta\C(\SUq)\btens_\zeta\C(\XX_{q,\rho,\lambda})$ while $\jmath_1$ and $\jmath_2$ in the last line refer to the double product $\bigl(\C(\SUq)\btens_\zeta\C(\SUq)\bigr)\btens_\zeta\C(\XX_{q,\rho,\lambda})$.

As for the density condition we note that since the matrix $\bV\in{M_3(\C(\SUq))}$ is unitary, we have for each $i$
\[
\jmath_2(e_i)=\sum_{k=-1}^1\jmath_1(\bv_{k,i}^*)\Gamma_{q,\rho,\lambda}(e_k),
\]
and hence
\[
\jmath_2(\C(\XX_{q,\rho,\lambda}))\subset\bigl[\jmath_1(\C(\SUq))\Gamma_{q,\rho,\lambda}(\C(\XX_{q,\rho,\lambda}))\bigr].
\]
Also clearly
\[
\jmath_1(\C(\SUq))\subset\bigl[\jmath_1(\C(\SUq))\Gamma_{q,\rho,\lambda}(\C(\XX_{q,\rho,\lambda}))\bigr],
\]
which yields the desired result.

Ad \eqref{PodlesXX}. It can be easily checked by direct computation that the defining relations of $\C(\XX_{q,\rho,\lambda})$ when expressed in the re-scaled generators
\[
E_1=\tfrac{1}{\sigma^2}e_{-1},\quad
E_0=\tfrac{1}{\sqrt{\varsigma}}e_0,\quad
E_{-1}=\tfrac{1}{\sigma^2}e_1
\]
read
\begin{align*}
\sigma^2\bigl(E_{-1}E_1+\tfrac{1}{\varsigma}E_1E_{-1}\bigr)+E_0^2&=\rho'\I,\\
E_1E_0-{\varsigma}E_0E_1&=\lambda'E_1,\\
\sigma^2(E_{-1}E_1-E_1E_{-1})+(1-\varsigma)E_0^2&={\lambda'}E_0,\\
E_0E_{-1}-{\varsigma}E_{-1}E_0&={\lambda'}E_{-1}
\end{align*}
which are precisely relations (2b), (2e), (2d) and (2c) of \cite[Section 3]{spheres} (with $\lambda'=\tfrac{\lambda}{\sqrt{\varsigma}\sigma^2}$ and $\rho'=\tfrac{\rho}{\varsigma\sigma^2}$). Clearly we still have ${E_i}^*=E_{-i}$ for $i=-1,0,1$.
\end{proof}

\begin{definition}
A \emph{braided quantum sphere} is a quantum space $\XX$ such that $\C(\XX)\in\operatorname{Ob}(\CstT)$ and there exists $\Gamma\in\Mor_\TT(\C(\XX),\C(\SUq)\btens_\zeta\C(\XX))$ such that $(\Gamma\btens_\zeta\id)\comp\Gamma=(\id\btens_\zeta\DSUq)\comp\Gamma$ and conditions \eqref{simple1}--\eqref{simple3} of Section \ref{simplespaces} hold.
\end{definition}

In other words quantum spheres for the braided quantum $\SUq$ are quantum spaces with an action (in the braided sense) of $\SUq$ which has the basic properties of the action on the quotient sphere described in Section \ref{theQuotientS2}. Moreover one can interpret conditions \eqref{simple1}--\eqref{simple3} as saying that if $\XX$ is a quantum sphere for the braided quantum $\SUq$ and $\Gamma:\C(\XX)\to\C(\SUq)\btens_\zeta\C(\XX)$ is the action then
\begin{itemize}
\item the action is ergodic or in other words $\XX$ is a (braided quantum) homogeneous space of $\SUq$ (condition \eqref{simple1}),
\item the multiplicity of the three-dimensional irreducible representation in the spectrum of the action is equal to $1$ and the \cst-algebra $\C(\XX)$ is generated by $\I$ and the corresponding spectral subspace (conditions \eqref{simple2} and \eqref{simple2.5}),
\item the action of $\TT$ on $\C(\XX)$ is analogous to the action on $\C(\SS^2_q)$ (cf.~Section \ref{inTerms}).
\end{itemize}

\begin{corollary}
The family braided quantum spheres coincides with the family of quantum spheres defined by Podle\'s in \cite{spheres}. The quantum spheres corresponding to the braided quantum $\operatorname{SU}(2)$ group with complex parameter $q$ are the Podle\'s spheres for the real parameter $|q|$.
\end{corollary}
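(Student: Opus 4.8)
The plan is to prove that the two families coincide by showing each contains the other, and then to read off the identification with Podle\'{s}' list from statement \eqref{PodlesXX} of the Theorem, noting that $(\rho,\lambda)\mapsto(\rho',\lambda')$ is a bijection onto the admissible Podle\'{s} parameters.

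First I would show that each $\C(\XX_{q,\rho,\lambda})$ is a braided quantum sphere, i.e.~that it satisfies conditions \eqref{simple1}--\eqref{simple3} of Section \ref{simplespaces}. Statements \eqref{TactionXX}--\eqref{gestoscXX} already supply the degree condition \eqref{simple3}, the equivariant action $\Gamma_{q,\rho,\lambda}$, and the coassociativity demanded in the Definition, so that condition \eqref{simple2} holds as soon as $e_{-1},e_0,e_1$ are linearly independent; this I would extract from the faithful representation $\pi$ of Section \ref{irr} (or from the isomorphism with a Podle\'{s} sphere). The substance lies in conditions \eqref{simple1} and \eqref{simple2.5}, which I would derive from the spectral decomposition of $\C(\XX_{q,\rho,\lambda})$ under $\Gamma_{q,\rho,\lambda}$: using the Haar conditional expectation of the braided $\SUq$ together with the fusion of the irreducible representation $\bV$, the algebra should split into isotypical components of dimensions $1,3,5,\dots$, each occurring with multiplicity one. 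The trivial component is then $\CC\I$, giving ergodicity \eqref{simple1}, and the $\bV$-component is exactly $\mathsf{W}$, giving uniqueness \eqref{simple2.5}.

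For the reverse inclusion, let $\XX$ be a braided quantum sphere with distinguished basis $\{e_{-1},e_0,e_1\}$ of $\mathsf{W}$. By Proposition \ref{uniquebasis} the basis is unique up to a scalar, so after the rescaling explained before Proposition \ref{onP0} I may assume ${e_i}^*=e_{-i}$. Proposition \ref{onP0} and the Corollary following Proposition \ref{onPi} then show that these generators satisfy the four defining relations of $\C(\XX_{q,\rho,\lambda})$ for suitable real $\rho,\lambda$, while condition \eqref{simple2} says that $\I,e_{-1},e_0,e_1$ generate $\C(\XX)$. The universal property thus yields a surjective, $\SUq$-equivariant morphism $\Theta\colon\C(\XX_{q,\rho,\lambda})\to\C(\XX)$.

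The hard part will be to show that $\Theta$ is injective. Its kernel is a closed two-sided ideal invariant under $\Gamma_{q,\rho,\lambda}$, and by the first inclusion the action on $\C(\XX_{q,\rho,\lambda})$ is ergodic; averaging a nonzero positive element of an invariant ideal by the faithful Haar conditional expectation produces a nonzero fixed point, hence a nonzero multiple of $\I$, so that the only invariant ideals are $\{0\}$ and the whole algebra. As $\ker\Theta$ is proper it must vanish, whence $\Theta$ is an isomorphism and $\C(\XX)\cong\C(\XX_{q,\rho,\lambda})$. Together with \eqref{PodlesXX} this identifies the braided quantum spheres with the Podle\'{s} spheres for the real parameter $|q|$. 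The principal technical point throughout is the spectral, multiplicity-one analysis of the braided action, on which both inclusions ultimately rest.
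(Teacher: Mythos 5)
The paper offers no separate proof of this corollary: it is meant to follow immediately from the Theorem of Section \ref{bps} (every braided quantum sphere has generators satisfying the relations of some $\C(\XX_{q,\rho,\lambda})$ by Propositions \ref{onP0}--\ref{onPi} and the Corollary after them, and $\C(\XX_{q,\rho,\lambda})$ is identified with a Podle\'{s} algebra in statement \eqref{PodlesXX}). Your two-inclusion skeleton is exactly this implicit argument, and your identification of where the real work lies is accurate. The difficulty is that the steps you defer are precisely the ones that are genuinely nontrivial and that neither you nor the paper actually carries out. First, to show that each $\XX_{q,\rho,\lambda}$ \emph{is} a braided quantum sphere you must verify conditions \eqref{simple1} and \eqref{simple2.5} for the universal algebra; your appeal to ``the Haar conditional expectation of the braided $\SUq$ together with the fusion of $\bV$'' presupposes a Haar state for the braided quantum group, a Peter--Weyl decomposition of the action into isotypical components, and the fusion rules for braided representations --- none of which is constructed or cited in the paper, which only develops the single representation $\bV$ and its tensor-square origin. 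Second, your injectivity argument for $\Theta$ needs (i) faithfulness of that Haar conditional expectation, (ii) the identification $\ker(\id\btens_\zeta\Theta)=\bigl[\jmath_1(\C(\SUq))\jmath_2(\ker\Theta)\bigr]$ so that the kernel is an invariant ideal in the sense you use (an exactness statement for $\btens_\zeta$), and (iii) the ergodicity you have not yet established. So the proposal is a correct plan whose load-bearing lemmas remain unproved.

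What the paper buys by \emph{not} doing this is worth noting: statement \eqref{PodlesXX} reduces everything to Podle\'{s}' original classification for the real parameter $|q|$, where the multiplicity-one spectral analysis, the determination of admissible $(\rho,\lambda)$ up to the rescaling $(\rho,\lambda)\mapsto(\theta^2\rho,\theta\lambda)$, and the identification of the universal algebras were already carried out in \cite{spheres}. If you want a self-contained proof along your lines, the honest route is to transport Podle\'{s}' spectral results through the isomorphism of \eqref{PodlesXX} (checking that it intertwines the relevant circle actions and spectral subspaces), rather than to redevelop the harmonic analysis of the braided $\SUq$ from scratch.
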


The family of quantum spheres has several descriptions by generators and relations. Section 2.5 of the article \cite{ldgeometry} contains a very concise yet thorough description of these quantum spaces. The first observation is that by scaling the generators $e_{-1},e_0,e_1$ of $\C(\XX_{q,\rho,\lambda})$ by a constant $\theta\in\RR\mysetminus\{0\}$ we obtain elements satisfying the relations defining $\C(\XX_{q,\theta^2\rho,\theta\lambda})$. This means that the two algebras are isomorphic via an isomorphism $\Phi$, but additionally $\Phi$ is $\SUq$-equivariant in the sense that $(\id\btens_\zeta\Phi)\comp\Gamma_{q,\rho,\lambda}=\Gamma_{q,\theta^2\rho,\theta\lambda}\comp\Phi$. It follows that for a fixed $q$ each element of the family of quantum spaces $\{\XX_{q,\rho,\lambda}\}$ is $\SUq$-equivariantly isomorphic to one of the family of Podle\'{s} spheres
\[
\bigl\{\SS^2_{|q|,c}\bigr\}_{c\in\RR\cup\infty}
\]
originally introduced in \cite[Section 3]{spheres} (for $c<0$ the corresponding \cst-algebras are finite-dimensional or $\{0\}$).

It is also easy to see that if $\C(\XX_{q,\rho,\lambda})$ is $\SUq$-equivariantly isomorphic to $\C(\XX_{q,\rho',\lambda'})$ then there exists $\kappa\in\RR\mysetminus\{0\}$ such that the isomorphism maps the generators $e_{-1},e_0,e_1$ of $\C(\XX_{q,\rho,\lambda})$ to $\kappa$ times the corresponding generators of $\C(\XX_{q,\rho',\lambda'})$. The reason for this is the assumed uniqueness of a subspace $\mathsf{W}'\subset\C(\XX_{q,\rho',\lambda'})$ possessing a basis which transforms according to $\bV$ and the fact that such a basis is unique up to proportionality. The constant must be real because ${e_i}^*=e_{-i}$ for all $i$ (cf.~Section \ref{simplespaces}).

\section*{Acknowledgments}

The author wishes to thank Matthew Daws, Piotr M.~Hajac and Paweł Kasprzak for helpful discussions and comments.

\setcounter{section}{0}

\renewcommand{\thesection}{\Alph{section}}

\section{Appendix}

\subsection{Calculation of certain products of matrix elements of $\bV$}\label{vv}

Below we will express the matrix elements of the $9\times9$ matrix $\VV$ discussed in Remark \ref{bigV} in terms of the linear generating set $\bigl\{a_{n,k,l}\bigr\}_{n\in\ZZ,k,l\in\ZZ_+}$ from Lemma \ref{baza}f.  We only use \eqref{SUq2rel} and \eqref{add}.

{\allowdisplaybreaks
\begin{align*}
% Row $(-1,-1)$
\bv_{-1,-1}\bv_{-1,-1}&=\alpha^2\cdot\alpha^2=a_{4,0,0},\\
\overline{\zeta}\bv_{-1,-1}\bv_{-1,0}&=\overline{\zeta}\alpha^2\cdot(-\sigma^2)\gamma^*\alpha=-\overline{\zeta}q^{-1}\sigma^2\alpha^3\gamma^*=-\overline{\zeta}q^{-1}\sigma^2a_{3,0,1},\\
\overline{\zeta}^2\bv_{-1,-1}\bv_{-1,1}&=\overline{\zeta}^2\alpha^2\cdot(-q){\gamma^*}^2=-\overline{\zeta}^2qa_{2,0,2},\\
\bv_{-1,0}\bv_{-1,-1}&=-\sigma^2\gamma^*\alpha\cdot\alpha^2=-q^{-3}\sigma^2\alpha^3\gamma^*=-q^{-3}\sigma^2a_{3,0,1},\\
\bv_{-1,0}\bv_{-1,0}&=-\sigma^2\gamma^*\alpha\cdot(-\sigma^2)\gamma^*\alpha=q^{-3}\sigma^4\alpha^2{\gamma^*}^2=q^{-3}\sigma^4a_{2,0,2},\\
\bv_{-1,0}\bv_{-1,1}&=-\sigma^2\gamma^*\alpha\cdot(-q){\gamma^*}^2=q\sigma^2\gamma^*\alpha{\gamma^*}^2=\sigma^2\alpha{\gamma^*}^3=\sigma^2a_{1,0,3},\\
\bv_{-1,1}\bv_{-1,-1}&=-q{\gamma^*}^2\cdot\alpha^2=-q^{-3}\alpha^2{\gamma^*}^2=-q^{-3}a_{2,0,2},\\
\zeta\bv_{-1,1}\bv_{-1,0}&=\zeta(-q){\gamma^*}^2\cdot(-\sigma^2)\gamma^*\alpha={\zeta}q\sigma^2{\gamma^*}^3\alpha={\zeta}q^{-2}\sigma^2\alpha{\gamma^*}^3=\varsigma^{-1}\sigma^2a_{1,0,3},\\
\zeta^2\bv_{-1,1}\bv_{-1,1}&=\zeta^2(-q){\gamma^*}^2\cdot(-q){\gamma^*}^2=\zeta^2q^2{\gamma^*}^4=\zeta^2q^2a_{0,0,4},\\%.
% Row $(-1,0)$
\zeta\bv_{-1,-1}\bv_{0,-1}&=\zeta\alpha^2\cdot\zeta\alpha\gamma=\zeta^2\alpha^3\gamma=\zeta^2a_{3,1,0},\\
\bv_{-1,-1}\bv_{0,0}&=\alpha^2\cdot(\I-\sigma^2\gamma^*\gamma)=\alpha^2-\sigma^2\alpha^2\gamma\gamma^*=a_{2,0,0}-\sigma^2a_{2,1,1},\\
\overline{\zeta}\bv_{-1,-1}\bv_{0,1}&=\overline{\zeta}\alpha^2\cdot\gamma^*\alpha=\overline{\zeta}q\alpha(\alpha\alpha^*)\gamma^*=\overline{q}\alpha(\I-\varsigma\gamma^*\gamma)\gamma^*=\overline{q}\alpha\gamma^*-\overline{q}\varsigma\alpha\gamma{\gamma^*}^2\\&=\overline{q}a_{1,0,1}-\overline{q}{\varsigma}a_{1,1,2},\\
\bv_{-1,0}\bv_{0,-1}&=-\sigma^2\gamma^*\alpha\cdot\zeta\alpha\gamma=-{\zeta}q^{-2}\sigma^2\alpha^2\gamma\gamma^*=-\varsigma^{-1}\sigma^2\alpha^2\gamma\gamma^*=-\varsigma^{-1}\sigma^2a_{2,1,1},\\
\bv_{-1,0}\bv_{0,0}&=-\sigma^2\gamma^*\alpha\cdot(\I-\sigma^2\gamma^*\gamma)=-q^{-1}\sigma^2\alpha\gamma^*+q^{-1}\sigma^4\alpha\gamma{\gamma^*}^2\\&=-q^{-1}\sigma^2a_{1,0,1}+q^{-1}\sigma^4a_{1,1,2},\\
\bv_{-1,0}\bv_{0,1}&=-\sigma^2\gamma^*\alpha\cdot\gamma^*\alpha^*=-q\sigma^2\gamma^*\alpha\alpha^*\gamma^*=-q\sigma^2\gamma^*(\I-\varsigma\gamma^*\gamma)\gamma^*\\&=-q\sigma^2{\gamma^*}^2+q\varsigma\sigma^2\gamma{\gamma^*}^3=-q\sigma^2a_{0,0,2}+q\varsigma\sigma^2a_{0,1,3},\\
\overline{\zeta}\bv_{-1,1}\bv_{0,-1}&=\overline{\zeta}(-q){\gamma^*}^2\cdot\zeta\alpha\gamma=-q{\gamma^*}^2\alpha\gamma=-q^{-1}\alpha\gamma{\gamma^*}^2=-q^{-1}a_{1,1,2},\\
\bv_{-1,1}\bv_{0,0}&=-q{\gamma^*}^2\cdot(\I-\sigma^2\gamma^*\gamma)=-q{\gamma^*}^2+q\sigma^2\gamma{\gamma^*}^3=-qa_{0,0,2}+q\sigma^2a_{0,1,3},\\
\zeta\bv_{-1,1}\bv_{0,1}&=\zeta(-q){\gamma^*}^2\cdot\gamma^*\alpha^*=-{\zeta}q^4\alpha^*{\gamma^*}^3=-{\zeta}q^4a_{-1,0,3},\\%.
% Row $(-1,1)$
\zeta^2\bv_{-1,-1}\bv_{1,-1}&=\zeta^2\alpha^2\cdot(-q)\zeta\gamma^2=-\zeta^3q\alpha^2\gamma^2=-\zeta^3qa_{2,2,0},\\
\zeta\bv_{-1,-1}\bv_{1,0}&=\zeta\alpha^2\cdot(-\sigma^2)\alpha^*\gamma=-{\zeta}\sigma^2\alpha(\alpha\alpha^*)\gamma=-{\zeta}\sigma^2\alpha(\I-\varsigma\gamma^*\gamma)\gamma=-{\zeta}\sigma^2\alpha\gamma+{\zeta}\varsigma\sigma^2\alpha\gamma^2\gamma^*\\&=-{\zeta}\sigma^2a_{1,1,0}+{\zeta}\varsigma\sigma^2a_{1,2,1}=-{\zeta}\sigma^2a_{1,1,0}+q^2\sigma^2a_{1,2,1},\\
\bv_{-1,-1}\bv_{1,1}&=\alpha^2\cdot{\alpha^*}^2=\alpha(\I-\varsigma\gamma^*\gamma)\alpha^*=\alpha\alpha^*-\varsigma\alpha\gamma^*\gamma\alpha^*=\alpha\alpha^*-\varsigma^2\alpha\alpha^*\gamma^*\gamma\\&=(\I-\varsigma\gamma^*\gamma)-\varsigma^2(\I-\varsigma\gamma^*\gamma)\gamma^*\gamma=\I-\varsigma(1+\varsigma)\gamma\gamma^*+\varsigma^3\gamma^2{\gamma^*}^2\\&=\I-\varsigma\sigma^2\gamma\gamma^*+\varsigma^3\gamma^2{\gamma^*}^2=\I-\varsigma\sigma^2a_{0,1,1}+\varsigma^3a_{0,2,2},\\
\bv_{-1,0}\bv_{1,-1}&=-\sigma^2\gamma^*\alpha\cdot(-q)\zeta\gamma^2={\zeta}q\sigma^2\gamma^*\alpha\gamma^2={\zeta}\sigma^2\alpha\gamma^2\gamma^*={\zeta}\sigma^2a_{1,2,1},\\
\bv_{-1,0}\bv_{1,0}&=-\sigma^2\gamma^*\alpha\cdot(-\sigma^2)\alpha^*\gamma=\sigma^4\gamma^*(\I-\varsigma\gamma^*\gamma)\gamma=\sigma^4\gamma\gamma^*-\varsigma\sigma^4\gamma^2{\gamma^*}^2\\&=\sigma^4a_{0,1,1}-\varsigma\sigma^4a_{0,2,2},\\
\bv_{-1,0}\bv_{1,1}&=-\sigma^2\gamma^*\alpha\cdot{\alpha^*}^2=-\sigma^2\gamma^*(\I-\varsigma\gamma^*\gamma)\alpha^*=-\sigma^2\gamma^*\alpha^*+\varsigma\sigma^2\gamma{\gamma^*}^2\alpha^*\\&=-q\sigma^2\alpha^*\gamma^*+q^2\overline{q}\varsigma\sigma^2\alpha^*\gamma{\gamma^*}^2=-q\sigma^2\alpha^*\gamma^*+q\varsigma^2\sigma^2\alpha^*\gamma{\gamma^*}^2\\&=-q\sigma^2a_{-1,0,1}+q\varsigma^2\sigma^2a_{-1,1,2},\\
\overline{\zeta}^2\bv_{-1,1}\bv_{1,-1}&=\overline{\zeta}^2(-q){\gamma^*}^2\cdot(-q)\zeta\gamma^2=\overline{\zeta}q^2\gamma^2{\gamma^*}^2=\varsigma\gamma^2{\gamma^*}^2={\varsigma}a_{0,2,2},\\
\overline{\zeta}\bv_{-1,1}\bv_{1,0}&=\overline{\zeta}(-q){\gamma^*}^2\cdot(-\sigma^2)\alpha^*\gamma=\overline{\zeta}q\sigma^2{\gamma^*}^2\alpha^*\gamma=\overline{\zeta}q^3\sigma^2\alpha^*\gamma{\gamma^*}^2=q\varsigma\sigma^2\alpha^*\gamma{\gamma^*}^2\\&=q\varsigma\sigma^2a_{-1,1,2},\\
\bv_{-1,1}\bv_{1,1}&=-q{\gamma^*}^2\cdot{\alpha^*}^2=-q^5{\alpha^*}^2{\gamma^*}^2=-q^5a_{-2,0,2},\\%.
% Row $(0,-1)$
\bv_{0,-1}\bv_{-1,-1}&=\zeta\alpha\gamma\cdot\alpha^2=\zeta\overline{q}^{-2}\alpha^2\gamma=\zeta\overline{q}^{-2}a_{3,1,0},\\
\overline{\zeta}\bv_{0,-1}\bv_{-1,0}&=\alpha\gamma\cdot(-\sigma^2)\gamma^*\alpha=-\sigma^2\alpha\gamma\gamma^*\alpha=-\varsigma^{-1}\sigma^2\alpha^2\gamma\gamma^*=-\varsigma^{-1}\sigma^2a_{2,1,1},\\
\overline{\zeta}^2\bv_{0,-1}\bv_{-1,1}&=\overline{\zeta}\alpha\gamma\cdot(-q){\gamma^*}^2=-\overline{\zeta}q\alpha\gamma{\gamma^*}^2=-\overline{q}\alpha\gamma{\gamma^*}^2=-\overline{q}a_{1,1,2},\\
\bv_{0,0}\bv_{-1,-1}&=(\I-\sigma^2\gamma^*\gamma)\cdot\alpha^2=\alpha^2-\sigma^2\gamma^*\gamma\alpha^2=\alpha^2-\varsigma^{-2}\sigma^2\alpha^2\gamma\gamma^*=a_{2,0,0}-\varsigma^{-2}\sigma^2a_{2,1,1},\\
\bv_{0,0}\bv_{-1,0}&=(\I-\sigma^2\gamma^*\gamma)\cdot(-\sigma^2)\gamma^*\alpha=-q^{-1}\sigma^2\alpha\gamma^*+\sigma^4\gamma{\gamma^*}^2\alpha\\&=-q^{-1}\sigma^2\alpha\gamma^*+q^{-2}\overline{q}^{-1}\sigma^4\alpha\gamma{\gamma^*}^2=-q^{-1}\sigma^2\alpha\gamma^*+q^{-1}\varsigma^{-1}\sigma^4\alpha\gamma{\gamma^*}^2\\&=-q^{-1}\sigma^2a_{1,0,1}+q^{-1}\varsigma^{-1}\sigma^4a_{1,1,2},\\
\bv_{0,0}\bv_{-1,1}&=(\I-\sigma^2\gamma^*\gamma)\cdot(-q){\gamma^*}^2=-q{\gamma^*}^2+q\sigma^2\gamma{\gamma^*}^3=-qa_{0,0,2}+q\sigma^2a_{0,1,3},\\
\bv_{0,1}\bv_{-1,-1}&=\gamma^*\alpha^*\cdot\alpha^2=\gamma^*(\I-\gamma^*\gamma)\alpha=\gamma^*\alpha-\gamma{\gamma^*}^2\alpha=q^{-1}\alpha\gamma^*-q^{-2}\overline{q}^{-1}\alpha\gamma{\gamma^*}^2\\&=q^{-1}\alpha\gamma^*-q^{-1}\varsigma^{-1}\alpha\gamma{\gamma^*}^2=q^{-1}a_{1,0,1}-q^{-1}\varsigma^{-1}a_{1,1,2},\\
\zeta\bv_{0,1}\bv_{-1,0}&=\zeta\gamma^*\alpha^*\cdot(-\sigma^2)\gamma^*\alpha=-{\zeta}q^{-1}\sigma^2\gamma^*\alpha^*\alpha\gamma^*=-{\zeta}q^{-1}\sigma^2\gamma^*(\I-\gamma^*\gamma)\gamma^*\\&={\zeta}q^{-1}\sigma^2{\gamma^*}^2+{\zeta}q^{-1}\sigma^2\gamma{\gamma^*}^3=\overline{q}^{-1}\sigma^2{\gamma^*}^2+\overline{q}^{-1}\sigma^2\gamma{\gamma^*}^3\\&=\overline{q}^{-1}\sigma^2a_{0,0,2}+\overline{q}^{-1}\sigma^2a_{0,1,3},\\
\zeta^2\bv_{0,1}\bv_{-1,1}&=\zeta^2\gamma^*\alpha^*\cdot(-q){\gamma^*}^2=-\zeta^2q^2\alpha^*{\gamma^*}^3=-\zeta^2q^2a_{-1,0,3},\\%.
% Row $(0,0)$
\zeta\bv_{0,-1}\bv_{0,-1}&=\zeta^2\alpha\gamma\cdot\zeta\alpha\gamma=\zeta^3\alpha\gamma\alpha\gamma=\zeta^3\overline{q}^{-1}\alpha^2\gamma^2=\zeta^3\overline{q}^{-1}a_{2,2,0},\\
\bv_{0,-1}\bv_{0,0}&=\zeta\alpha\gamma\cdot(\I-\sigma^2\gamma^*\gamma)=\zeta\alpha\gamma-{\zeta}\sigma^2\alpha\gamma^2\gamma^*={\zeta}a_{1,1,0}-{\zeta}\sigma^2a_{1,2,1},\\
\overline{\zeta}\bv_{0,-1}\bv_{0,1}&=\overline{\zeta}\zeta\alpha\gamma\cdot\gamma^*\alpha^*=\varsigma\alpha\alpha^*\gamma\gamma^*=\varsigma(\I-\varsigma\gamma^*\gamma)\gamma\gamma^*=\varsigma\gamma\gamma^*-\varsigma^2\gamma^2{\gamma^*}^2\\&={\varsigma}a_{0,1,1}-\varsigma^2a_{0,2,2},\\
\bv_{0,0}\bv_{0,-1}&=(\I-\sigma^2\gamma^*\gamma)\cdot\zeta\alpha\gamma=\zeta\alpha\gamma-{\zeta}\sigma^2\gamma^*\gamma\alpha\gamma=\zeta\alpha\gamma-{\zeta}\varsigma^{-1}\sigma^2\alpha\gamma^2\gamma^*\\&={\zeta}a_{1,1,0}-{\zeta}\varsigma^{-1}\sigma^2a_{1,2,1},\\
\bv_{0,0}\bv_{0,0}&=(\I-\sigma^2\gamma^*\gamma)\cdot(\I-\sigma^2\gamma^*\gamma)=\I-2\sigma^2\gamma\gamma^*+\sigma^4\gamma^2{\gamma^*}^2=\I-2\sigma^2a_{0,1,1}+\sigma^4a_{0,2,2},\\
\bv_{0,0}\bv_{0,1}&=(\I-\sigma^2\gamma^*\gamma)\cdot\gamma^*\alpha^*=\gamma^*\alpha^*-\sigma^2\gamma{\gamma^*}^2\alpha^*=q\alpha^*\gamma^*-q^2\overline{q}\sigma^2\alpha^*\gamma{\gamma^*}^2\\&=q\alpha^*\gamma^*-q\varsigma\sigma^2\alpha^*\gamma{\gamma^*}^2=qa_{-1,0,1}-q\varsigma\sigma^2a_{-1,1,2},\\
\overline{\zeta}\bv_{0,1}\bv_{0,-1}&=\overline{\zeta}\gamma^*\alpha^*\cdot\zeta\alpha\gamma=\gamma^*\alpha^*\alpha\gamma=\gamma^*(\I-\gamma^*\gamma)\gamma=\gamma\gamma^*-\gamma^2{\gamma^*}^2=a_{0,1,1}-a_{0,2,2},\\
\bv_{0,1}\bv_{0,0}&=\gamma^*\alpha^*\cdot(\I-\sigma^2\gamma^*\gamma)=q\alpha^*\gamma^*(\I-\sigma^2\gamma^*\gamma)=q\alpha^*\gamma^*-q\sigma^2\gamma{\gamma^*}^2\\&=qa_{-1,0,1}-q\sigma^2a_{-1,1,2},\\
\zeta\bv_{0,1}\bv_{0,1}&=\zeta\gamma^*\alpha^*\cdot\gamma^*\alpha^*={\zeta}q^3{\alpha^*}^2{\gamma^*}^2={\zeta}q^3a_{-2,0,2},\\%.
% Row $(0,1)$
\zeta^2\bv_{0,-1}\bv_{1,-1}&=\zeta^3\alpha\gamma\cdot(-q)\zeta\gamma^2=-\zeta^4q\alpha\gamma^3=-\zeta^4qa_{1,3,0},\\
\zeta\bv_{0,-1}\bv_{1,0}&=\zeta^2\alpha\gamma\cdot(-\sigma^2)\alpha^*\gamma=-\zeta^2\overline{q}\sigma^2\alpha\alpha^*\gamma^2=-{\zeta}q\sigma^2(\I-\varsigma\gamma^*\gamma)\gamma^2\\&=-{\zeta}q\sigma^2\gamma^2-{\zeta}q\varsigma\sigma^2\gamma^3\gamma^*=-{\zeta}q\sigma^2\gamma^2-q^3\sigma^2\gamma^3\gamma^*=-{\zeta}q\sigma^2a_{0,2,0}-q^3\sigma^2a_{0,3,1},\\
\bv_{0,-1}\bv_{1,1}&=\zeta\alpha\gamma\cdot{\alpha^*}^2=\zeta\overline{q}^2\alpha{\alpha^*}^2\gamma=\varsigma(\alpha\alpha^*)\alpha^*\gamma=\varsigma(\I-\varsigma\gamma^*\gamma)\alpha^*\gamma=\varsigma\alpha^*\gamma-\varsigma^2\gamma\gamma^*\alpha^*\gamma\\&=\varsigma\alpha^*\gamma-\varsigma^3\alpha^*\gamma^2\gamma^*={\varsigma}a_{-1,1,0}-\varsigma^3a_{-1,2,1},\\
\bv_{0,0}\bv_{1,-1}&=(\I-\sigma^2\gamma^*\gamma)\cdot(-q)\zeta\gamma^2=-{\zeta}q\gamma^2+{\zeta}q\sigma^2\gamma^3\gamma^*=-{\zeta}qa_{0,2,0}+{\zeta}q\sigma^2a_{0,3,1},\\
\bv_{0,0}\bv_{1,0}&=(\I-\sigma^2\gamma^*\gamma)\cdot(-\sigma^2)\alpha^*\gamma=-\sigma^2\alpha^*\gamma+\sigma^4\gamma^*\gamma\alpha^*\gamma=-\sigma^2\alpha^*\gamma+\varsigma\sigma^4\alpha^*\gamma^2\gamma^*\\&=-\sigma^2a_{-1,1,0}+\varsigma\sigma^4a_{-1,2,1},\\
\bv_{0,0}\bv_{1,1}&=(\I-\sigma^2\gamma^*\gamma)\cdot{\alpha^*}^2={\alpha^*}^2-\sigma^2\gamma^*\gamma{\alpha^*}^2={\alpha^*}^2-\varsigma^2{\alpha^*}^2\sigma^2\gamma\gamma^*\\&=a_{-2,0,0}-\varsigma^2\sigma^2a_{-2,1,1},\\
\overline{\zeta}^2\bv_{0,1}\bv_{1,-1}&=\overline{\zeta}^2\gamma^*\alpha^*\cdot(-q)\zeta\gamma^2=-\overline{\zeta}q\gamma^*\alpha^*\gamma^2=-\overline{\zeta}q^2\alpha^*\gamma^2\gamma^*=-\varsigma\alpha^*\gamma^2\gamma^*=-{\varsigma}a_{-1,2,1},\\
\overline{\zeta}\bv_{0,1}\bv_{1,0}&=\overline{\zeta}\gamma^*\alpha^*\cdot(-\sigma^2)\alpha^*\gamma=-\overline{\zeta}\sigma^2\gamma^*{\alpha^*}^2\gamma=-\overline{\zeta}q^2\sigma^2{\alpha^*}^2\gamma\gamma^*=-\varsigma\sigma^2{\alpha^*}^2\gamma\gamma^*\\&=-\varsigma\sigma^2a_{-2,1,1},\\
\bv_{0,1}\bv_{1,1}&=\gamma^*\alpha^*\cdot{\alpha^*}^2=q^3{\alpha^*}^3\gamma^*=q^3a_{-3,0,1},\\%.
% Row $(1,-1)$
\bv_{1,-1}\bv_{-1,-1}&=-q\zeta\gamma^2\cdot\alpha^2=-{\zeta}q\overline{q}^{-4}\alpha^2\gamma^2=-\zeta^5q^{-3}a_{2,2,0},\\
\overline{\zeta}\bv_{1,-1}\bv_{-1,0}&=-q\gamma^2\cdot(-\sigma^2)\gamma^*\alpha=q\sigma^2\gamma^2\gamma^*\alpha=\overline{q}^{-2}\sigma^2\alpha\gamma^2\gamma^*=\overline{q}^{-2}\sigma^2a_{1,2,1},\\
\overline{\zeta}^2\bv_{1,-1}\bv_{-1,1}&=-\overline{\zeta}q\gamma^2\cdot(-q){\gamma^*}^2=\overline{\zeta}q^2\gamma^2{\gamma^*}^2=\varsigma\gamma^2{\gamma^*}^2={\varsigma}a_{0,2,2},\\
\bv_{1,0}\bv_{-1,-1}&=-\sigma^2\alpha^*\gamma\cdot\alpha^2
=-\overline{q}^{-2}\sigma^2(\alpha^*\alpha)\alpha\gamma=-\overline{q}^{-2}\sigma^2(\I-\gamma^*\gamma)\alpha\gamma\\&=-\overline{q}^{-2}\sigma^2\alpha\gamma+\overline{q}^{-2}\sigma^2\gamma^*\gamma\alpha\gamma=-\overline{q}^{-2}\sigma^2\alpha\gamma+\overline{q}^{-2}\varsigma^{-1}\sigma^2\alpha\gamma^*\gamma^2\\&=-\overline{q}^{-2}\sigma^2a_{1,1,0}+\overline{q}^{-2}\varsigma^{-1}\sigma^2a_{1,2,1},\\
\bv_{1,0}\bv_{-1,0}&=-\sigma^2\alpha^*\gamma\cdot(-\sigma^2)\gamma^*\alpha
=\sigma^4\alpha^*\gamma\gamma^*\alpha=\varsigma^{-1}\sigma^4\alpha^*\alpha\gamma\gamma^*=\varsigma^{-1}\sigma^4(\I-\gamma^*\gamma)\gamma\gamma^*\\&=\varsigma^{-1}\sigma^4\gamma\gamma^*-\varsigma^{-1}\sigma^4\gamma^2{\gamma^*}^2=\varsigma^{-1}\sigma^4a_{0,1,1}-\varsigma^{-1}\sigma^4a_{0,2,2},\\
\bv_{1,0}\bv_{-1,1}&=-\sigma^2\alpha^*\gamma\cdot(-q){\gamma^*}^2=q\sigma^2\alpha^*\gamma{\gamma^*}^2=q\sigma^2a_{-1,1,2},\\
\bv_{1,1}\bv_{-1,-1}&={\alpha^*}^2\cdot\alpha^2=\alpha^*(\I-\gamma^*\gamma)\alpha=\alpha^*\alpha-\alpha^*\gamma^*\gamma\alpha=\alpha^*\alpha-\varsigma^{-1}\alpha^*\alpha\gamma^*\gamma\\&=(\I-\gamma^*\gamma)(\I-\varsigma^{-1}\gamma^*\gamma)=\I-(1+\varsigma^{-1})\gamma^*\gamma+\varsigma^{-1}\gamma^2{\gamma^*}^2\\&=\I-\varsigma^{-1}\sigma^2\gamma^*\gamma+\varsigma^{-1}\gamma^2{\gamma^*}^2=\I-\varsigma^{-1}\sigma^2a_{0,1,1}+\varsigma^{-1}a_{0,2,2},\\
\zeta\bv_{1,1}\bv_{-1,0}&=\zeta{\alpha^*}^2\cdot(-\sigma^2)\gamma^*\alpha=-{\zeta}\sigma^2{\alpha^*}^2\gamma^*\alpha=-{\zeta}q^{-1}\sigma^2\alpha^*(\alpha^*\alpha)\gamma^*\\&=-\overline{q}^{-1}\sigma^2\alpha^*(\I-\gamma^*\gamma)\gamma^*=-\overline{q}^{-1}\sigma^2\alpha^*\gamma^*+\overline{q}^{-1}\sigma^2\alpha^*\gamma{\gamma^*}^2\\&=-\overline{q}^{-1}\sigma^2a_{-1,0,1}+\overline{q}^{-1}\sigma^2a_{-1,1,2},\\
\zeta^2\bv_{1,1}\bv_{-1,1}&=\zeta^2{\alpha^*}^2\cdot(-q){\gamma^*}^2=-\zeta^2qa_{-2,0,2},\\%.
% Row $(1,0)$
\zeta\bv_{1,-1}\bv_{0,-1}&=-\zeta^2q\gamma^2\cdot\zeta\alpha\gamma=-\zeta^3q\gamma^2\alpha\gamma=-\zeta^3q\overline{q}^{-2}\alpha\gamma^3=-\zeta^5q^{-1}\alpha\gamma^3=-\zeta^5q^{-1}a_{1,3,0},\\
\bv_{1,-1}\bv_{0,0}&=-q\zeta\gamma^2\cdot(\I-\sigma^2\gamma^*\gamma)=-{\zeta}q\gamma^2+{\zeta}q\sigma^2\gamma^3\gamma^*=-{\zeta}qa_{0,2,0}+{\zeta}q\sigma^2a_{0,3,1},\\
\overline{\zeta}\bv_{1,-1}\bv_{0,1}&=-q\gamma^2\cdot\gamma^*\alpha^*=-\varsigma^2\alpha^*\gamma^2\gamma^*=-\varsigma^2a_{-1,2,1},\\
\bv_{1,0}\bv_{0,-1}&=-\sigma^2\alpha^*\gamma\cdot\zeta\alpha\gamma=-{\zeta}\sigma^2\alpha^*\gamma\alpha\gamma=-\zeta\overline{q}^{-1}\sigma^2\alpha^*\alpha\gamma^2=-\zeta\overline{q}^{-1}\sigma^2(\I-\gamma^*\gamma)\gamma^2\\&=-\zeta\overline{q}^{-1}\sigma^2\gamma^2+\zeta\overline{q}^{-1}\sigma^2\gamma^3\gamma^*=-\zeta\overline{q}^{-1}\sigma^2a_{0,2,0}+\zeta\overline{q}^{-1}\sigma^2a_{0,3,1},\\
\bv_{1,0}\bv_{0,0}&=-\sigma^2\alpha^*\gamma\cdot(\I-\sigma^2\gamma^*\gamma)=-\sigma^2\alpha^*\gamma+\sigma^4\alpha^*\gamma^2\gamma^*=-\sigma^2a_{-1,1,0}+\sigma^4a_{-1,2,1},\\
\bv_{1,0}\bv_{0,1}&=-\sigma^2\alpha^*\gamma\cdot\gamma^*\alpha^*=-\varsigma\sigma^2{\alpha^*}^2\gamma\gamma^*=-\varsigma\sigma^2a_{-2,1,1},\\
\overline{\zeta}\bv_{1,1}\bv_{0,-1}&=\overline{\zeta}{\alpha^*}^2\cdot\zeta\alpha\gamma=\alpha^*(\alpha^*\alpha)\gamma=\alpha^*(\I-\gamma^*\gamma)\gamma=\alpha^*\gamma-\alpha^*\gamma^2\gamma^*=a_{-1,1,0}-a_{-1,2,1},\\
\bv_{1,1}\bv_{0,0}&={\alpha^*}^2\cdot(\I-\sigma^2\gamma^*\gamma)={\alpha^*}^2-\sigma^2{\alpha^*}^2\gamma\gamma^*=a_{-2,0,0}-\sigma^2a_{-2,1,1},\\
\zeta\bv_{1,1}\bv_{0,1}&=\zeta{\alpha^*}^2\cdot\gamma^*\alpha^*={\zeta}q{\alpha^*}^3\gamma^*={\zeta}qa_{-3,0,1},\\%.
% Row $(1,1)$
\zeta^2\bv_{1,-1}\bv_{1,-1}&=-q\zeta^3\gamma^2\cdot(-q)\zeta\gamma^2=\zeta^4q^2\gamma^4=\zeta^4q^2a_{0,4,0},\\
\zeta\bv_{1,-1}\bv_{1,0}&=-q\zeta^2\gamma^2\cdot(-\sigma^2)\alpha^*\gamma=\zeta^2q\sigma^2\gamma^2\alpha^*\gamma=\zeta^2q\overline{q}^2\sigma^2\alpha^*\gamma^3=q^3\sigma^2\alpha^*\gamma^3=q^3\sigma^2a_{-1,3,0},\\
\bv_{1,-1}\bv_{1,1}&=-q\zeta\gamma^2\cdot{\alpha^*}^2=-{\zeta}q\overline{q}^4{\alpha^*}^2\gamma^2=-\overline{q}\varsigma^2{\alpha^*}^2\gamma^2=-\overline{q}\varsigma^2a_{-2,2,0},\\
\bv_{1,0}\bv_{1,-1}&=-\sigma^2\alpha^*\gamma\cdot(-q)\zeta\gamma^2={\zeta}q\sigma^2\alpha^*\gamma^3={\zeta}q\sigma^2a_{-1,3,0},\\
\bv_{1,0}\bv_{1,0}&=-\sigma^2\alpha^*\gamma\cdot(-\sigma^2)\alpha^*\gamma=\sigma^4\alpha^*\gamma\alpha^*\gamma=\overline{q}\sigma^4{\alpha^*}^2\gamma^2=\overline{q}\sigma^4a_{-2,2,0},\\
\bv_{1,0}\bv_{1,1}&=-\sigma^2\alpha^*\gamma\cdot{\alpha^*}^2=-\overline{q}^2\sigma^2{\alpha^*}^3\gamma=-\overline{q}^2\sigma^2a_{-3,1,0},\\
\overline{\zeta}^2\bv_{1,1}\bv_{1,-1}&=\overline{\zeta}^2{\alpha^*}^2\cdot(-q)\zeta\gamma^2=-\overline{\zeta}q{\alpha^*}^2\gamma^2=-\overline{q}{\alpha^*}^2\gamma^2=-\overline{q}a_{-2,2,0},\\
\overline{\zeta}\bv_{1,1}\bv_{1,0}&=\overline{\zeta}{\alpha^*}^2\cdot(-\sigma^2)\alpha^*\gamma=-\overline{\zeta}\sigma^2{\alpha^*}^3\gamma=-\overline{\zeta}\sigma^2a_{-3,1,0},\\
\bv_{1,1}\bv_{1,1}&={\alpha^*}^2\cdot{\alpha^*}^2=a_{-4,0,0}.
\end{align*}
} % \allowdisplaybreaks

\subsection{Direct check that $\bV$ is a representation}\label{directcheck}

Using \eqref{SUq2rel} supplemented by \eqref{add} and \eqref{braid} we find that:
{\allowdisplaybreaks
\begin{align*}
\DSUq(\alpha^2)&=\bigl(\jmath_1(\alpha)\jmath_2(\alpha)-q\jmath_1(\gamma^*)\jmath_2(\gamma)\bigr)\bigl(\jmath_1(\alpha)\jmath_2(\alpha)-q\jmath_1(\gamma^*)\jmath_2(\gamma)\bigr)\\
&=\jmath_1(\alpha^2)\jmath_2(\alpha^2)-q\jmath_1(\gamma^*\alpha)\jmath_2(\gamma\alpha)-q\jmath_1(\alpha\gamma^*)\jmath_2(\alpha\gamma)+{\zeta}q^2\jmath_1\bigl({\gamma^*}^2\bigr)\jmath_2(\gamma^2)\\
&=\jmath_1(\alpha^2)\jmath_2(\alpha^2)-\zeta\jmath_1(\gamma^*\alpha)\jmath_2(\overline{q}\gamma\alpha)-\overline{q}\jmath_1(\alpha\gamma^*)\jmath_2(\zeta\alpha\gamma)+\jmath_1\bigl({-q\gamma^*}^2\bigr)\jmath_2(-q\zeta\gamma^2)\\
&=\jmath_1(\alpha^2)\jmath_2(\alpha^2)-\jmath_1(\gamma^*\alpha)\jmath_2(\zeta\alpha\gamma)-\overline{q}\jmath_1(q\gamma^*\alpha)\jmath_2(\zeta\alpha\gamma)+\jmath_1\bigl({-q\gamma^*}^2\bigr)\jmath_2(-q\zeta\gamma^2)\\
&=\jmath_1(\alpha^2)\jmath_2(\alpha^2)+\jmath_1(-\sigma^2\gamma^*\alpha)\jmath_2(\zeta\alpha\gamma)+\jmath_1\bigl({-q\gamma^*}^2\bigr)\jmath_2(-q\zeta\gamma^2),\\
\DSUq(\zeta\alpha\gamma)&=\zeta\bigl(\jmath_1(\alpha)\jmath_2(\alpha)-q\jmath_1(\gamma^*)\jmath_2(\gamma)\bigr)\bigl(\jmath_1(\gamma)\jmath_2(\alpha)+\jmath_1(\alpha^*)\jmath_2(\gamma)\bigr)\\
&=\jmath_1(\zeta\alpha\gamma)\jmath_2(\alpha^2)-\zeta\overline{\zeta}q\jmath_1(\gamma^*\gamma)\jmath_2(\gamma\alpha)+\jmath_1(\alpha\alpha^*)\jmath_2(\zeta\alpha\gamma)-{\zeta}q\jmath_1(\gamma^*\alpha^*)\jmath_2(\gamma^2)\\
&=\jmath_1(\zeta\alpha\gamma)\jmath_2(\alpha^2)-\jmath_1(\gamma^*\gamma)\jmath_2(\zeta\alpha\gamma)+\jmath_1(\alpha\alpha^*)\jmath_2(\zeta\alpha\gamma)-{\zeta}q\jmath_1(\gamma^*\alpha^*)\jmath_2(\gamma^2)\\
&=\jmath_1(\zeta\alpha\gamma)\jmath_2(\alpha^2)+\jmath_1(\alpha\alpha^*-\gamma^*\gamma)\jmath_2(\zeta\alpha\gamma)+\jmath_1(\gamma^*\alpha^*)\jmath_2(-q\zeta\gamma^2)\\
&=\jmath_1(\zeta\alpha\gamma)\jmath_2(\alpha^2)+\jmath_1(\I-\sigma^2\gamma^*\gamma)\jmath_2(\zeta\alpha\gamma)+\jmath_1(\gamma^*\alpha^*)\jmath_2(-q\zeta\gamma^2),\\
\DSUq(-q\zeta\gamma^2)&=-q\zeta\bigl(\jmath_1(\gamma)\jmath_2(\alpha)+\jmath_1(\alpha^*)\jmath_2(\gamma)\bigr)\bigl(\jmath_1(\gamma)\jmath_2(\alpha)+\jmath_1(\alpha^*)\jmath_2(\gamma)\bigr)\\
&=\jmath_1(-q\zeta\gamma^2)\jmath_2(\alpha^2)-\zeta\overline{\zeta}q\jmath_1(\alpha^*\gamma)\jmath_2(\gamma\alpha)-q\jmath_1(\gamma\alpha^*)\jmath_2(\zeta\alpha\gamma)\\&\quad+\jmath_1\bigl({\alpha^*}^2\bigr)\jmath_2(-q\zeta\gamma^2)\\
&=\jmath_1(-q\zeta\gamma^2)\jmath_2(\alpha^2)-\jmath_1(\alpha^*\gamma)\jmath_2(\zeta\alpha\gamma)-\jmath_1(\varsigma\gamma\alpha^*)\jmath_2(\zeta\alpha\gamma)+\jmath_1\bigl({\alpha^*}^2\bigr)\jmath_2(-q\zeta\gamma^2)\\
&=\jmath_1(-q\zeta\gamma^2)\jmath_2(\alpha^2)+\jmath_1(-\sigma^2\alpha^*\gamma)\jmath_2(\zeta\alpha\gamma)+\jmath_1\bigl({\alpha^*}^2\bigr)\jmath_2(-q\zeta\gamma^2),\\
\DSUq(-\sigma^2\gamma^*\alpha)&=-\sigma^2\bigl(\jmath_1(\gamma)\jmath_2(\alpha)+\jmath_1(\alpha^*)\jmath_2(\gamma)\bigr)^*\bigl(\jmath_1(\alpha)\jmath_2(\alpha)-q\jmath_1(\gamma^*)\jmath_2(\gamma)\bigr)\\
&=-\sigma^2\bigl(\jmath_2(\alpha^*)\jmath_1(\gamma^*)+\jmath_2(\gamma^*)\jmath_1(\alpha)\bigr)\bigl(\jmath_1(\alpha)\jmath_2(\alpha)-q\jmath_1(\gamma^*)\jmath_2(\gamma)\bigr)\\
&=-\sigma^2\jmath_2(\alpha^*)\jmath_1(\gamma^*\alpha)\jmath_2(\alpha)-\sigma^2\jmath_2(\gamma^*)\jmath_1(\alpha^2)\jmath_2(\alpha)+q\sigma^2\jmath_2(\alpha^*)\jmath_1\bigl({\gamma^*}^2\bigr)\jmath_2(\gamma)\\
&\quad+q\sigma^2\jmath_2(\gamma^*)\jmath_1(\alpha\gamma^*)\jmath_2(\gamma)\\
&=\jmath_1(-\sigma^2\gamma^*\alpha)\jmath_2(\alpha^*\alpha)+\jmath_1(\alpha^2)\jmath_2(-\sigma^2\gamma^*\alpha)+\jmath_1\bigl(-q{\gamma^*}^2\bigr)\jmath_2(-\sigma^2\alpha^*\gamma)\\
&\quad+\overline{\zeta}q\sigma^2\jmath_1(\alpha\gamma^*)\jmath_2(\gamma^*\gamma)\\
&=\jmath_1(-\sigma^2\gamma^*\alpha)\jmath_2(\alpha^*\alpha)+\jmath_1(\alpha^2)\jmath_2(-\sigma^2\gamma^*\alpha)+\jmath_1\bigl(-q{\gamma^*}^2\bigr)\jmath_2(-\sigma^2\alpha^*\gamma)\\
&\quad+\overline{q}\sigma^2\jmath_1(q\gamma^*\alpha)\jmath_2(\gamma^*\gamma)\\
&=\jmath_1(-\sigma^2\gamma^*\alpha)\jmath_2(\alpha^*\alpha)+\jmath_1(\alpha^2)\jmath_2(-\sigma^2\gamma^*\alpha)+\jmath_1\bigl(-q{\gamma^*}^2\bigr)\jmath_2(-\sigma^2\alpha^*\gamma)\\
&\quad-\jmath_1(-\sigma^2\gamma^*\alpha)\jmath_2(\varsigma\gamma^*\gamma)\\
&=\jmath_1(\alpha^2)\jmath_2(-\sigma^2\gamma^*\alpha)+\jmath_1(-\sigma^2\gamma^*\alpha)\jmath_2(\alpha^*\alpha-\varsigma\gamma^*\gamma)+\jmath_1\bigl(-q{\gamma^*}^2\bigr)\jmath_2(-\sigma^2\alpha^*\gamma)\\
&=\jmath_1(\alpha^2)\jmath_2(-\sigma^2\gamma^*\alpha)+\jmath_1(-\sigma^2\gamma^*\alpha)\jmath_2(\I-\sigma^2\gamma^*\gamma)+\jmath_1\bigl(-q{\gamma^*}^2\bigr)\jmath_2(-\sigma^2\alpha^*\gamma),\\
\DSUq(\I-\sigma^2\gamma^*\gamma)&=\jmath_1(\I)\jmath_2(\I)-\sigma^2\bigl(\jmath_2(\alpha^*)\jmath_1(\gamma^*)+\jmath_2(\gamma^*)\jmath_1(\alpha)\bigr)\bigl(\jmath_1(\gamma)\jmath_2(\alpha)+\jmath_1(\alpha^*)\jmath_2(\gamma)\bigr)\\
&=\I-\sigma^2\jmath_2(\alpha^*)\jmath_1(\gamma^*\gamma)\jmath_2(\alpha)-\sigma^2\jmath_2(\gamma^*)\jmath_1(\alpha\gamma)\jmath_2(\alpha)-\sigma^2\jmath_2(\alpha^*)\jmath_1(\gamma^*\alpha^*)\jmath_2(\gamma)\\
&\quad-\sigma^2\jmath_2(\gamma^*)\jmath_1(\alpha\alpha^*)\jmath_2(\gamma)\\
&=\I-\sigma^2\jmath_1(\gamma^*\gamma)\jmath_2(\alpha^*\alpha)-\sigma^2\zeta\jmath_1(\alpha\gamma)\jmath_2(\gamma^*\alpha)-\sigma^2\jmath_1(\gamma^*\alpha^*)\jmath_2(\alpha^*\gamma)\\
&\quad-\sigma^2\jmath_1(\alpha\alpha^*)\jmath_2(\gamma^*\gamma)\\
&=\I-\sigma^2\jmath_1(\gamma^*\gamma)\bigl(\I-\jmath_2(\gamma^*\gamma)\bigr)+\jmath_1(\zeta\alpha\gamma)\jmath_2(-\sigma^2\gamma^*\alpha)+\jmath_1(\gamma^*\alpha^*)\jmath_2(-\sigma^2\alpha^*\gamma)\\
&\quad-\sigma^2\bigl(\I-\varsigma\jmath_1(\gamma^*\gamma)\bigr)\jmath_2(\gamma^*\gamma)\\
&=\I-\sigma^2\jmath_1(\gamma^*\gamma)-\sigma^2\jmath_2(\gamma^*\gamma)+\sigma^2\jmath_1(\gamma^*\gamma)\jmath_2(\gamma^*\gamma)+\sigma^2\varsigma\jmath_1(\gamma^*\gamma)\jmath_2(\gamma^*\gamma)\\
&\quad+\jmath_1(\zeta\alpha\gamma)\jmath_2(-\sigma^2\gamma^*\alpha)+\jmath_1(\gamma^*\alpha^*)\jmath_2(-\sigma^2\alpha^*\gamma)\\
&=\jmath_1(\I-\sigma^2\gamma^*\gamma)\jmath_2(\I-\sigma^2\gamma^*\gamma)+\jmath_1(\zeta\alpha\gamma)\jmath_2(-\sigma^2\gamma^*\alpha)+\jmath_1(\gamma^*\alpha^*)\jmath_2(-\sigma^2\alpha^*\gamma)\\
&=\jmath_1(\zeta\alpha\gamma)\jmath_2(-\sigma^2\gamma^*\alpha)+\jmath_1(\I-\sigma^2\gamma^*\gamma)\jmath_2(\I-\sigma^2\gamma^*\gamma)+\jmath_1(\gamma^*\alpha^*)\jmath_2(-\sigma^2\alpha^*\gamma),\\
\DSUq(-\sigma^2\alpha^*\gamma)&=\DSUq(-\sigma^2\gamma^*\alpha)^*\\
&=\bigl(\jmath_1(\alpha^2)\jmath_2(-\sigma^2\gamma^*\alpha)+\jmath_1(-\sigma^2\gamma^*\alpha)\jmath_2(\I-\sigma^2\gamma^*\gamma)\\&\quad+\jmath_1\bigl(-q{\gamma^*}^2\bigr)\jmath_2(-\sigma^2\alpha^*\gamma)\bigr)^*\\
&=\jmath_2(-\sigma^2\alpha^*\gamma)\jmath_1\bigl({\alpha^*}^2\bigr)+\jmath_2(\I-\sigma^2\gamma^*\gamma)\jmath_1(-\sigma^2\alpha^*\gamma)+\jmath_2(-\sigma^2\gamma^*\alpha)\jmath_1(-\overline{q}\gamma^2)\\
&=\jmath_1\bigl({\alpha^*}^2\bigr)\jmath_2(-\sigma^2\alpha^*\gamma)+\jmath_1(-\sigma^2\alpha^*\gamma)\jmath_2(\I-\sigma^2\gamma^*\gamma)+\zeta^2\jmath_1(-\overline{q}\gamma^2)\jmath_2(-\sigma^2\gamma^*\alpha)\\
&=\jmath_1\bigl({\alpha^*}^2\bigr)\jmath_2(-\sigma^2\alpha^*\gamma)+\jmath_1(-\sigma^2\alpha^*\gamma)\jmath_2(\I-\sigma^2\gamma^*\gamma)+\jmath_1(-q\zeta\gamma^2)\jmath_2(-\sigma^2\gamma^*\alpha)\\
&=\jmath_1(-q\zeta\gamma^2)\jmath_2(-\sigma^2\gamma^*\alpha)+\jmath_1(-\sigma^2\alpha^*\gamma)\jmath_2(\I-\sigma^2\gamma^*\gamma)+\jmath_1\bigl({\alpha^*}^2\bigr)\jmath_2(-\sigma^2\alpha^*\gamma),\\
\DSUq\bigl(-q{\gamma^*}^2\bigr)&=\zeta^2\DSUq(-q\zeta\gamma^2)^*\\
&=\zeta^2\bigl(\jmath_1(-q\zeta\gamma^2)\jmath_2(\alpha^2)+\jmath_1(-\sigma^2\alpha^*\gamma)\jmath_2(\zeta\alpha\gamma)+\jmath_1\bigl({\alpha^*}^2\bigr)\jmath_2(-q\zeta\gamma^2)\bigr)^*\\
&=\zeta^2\bigl(\jmath_2\bigl({\alpha^*}^2\bigr)\jmath_1\bigl(-\overline{q}\overline{\zeta}{\gamma^*}^2\bigr)+\jmath_2(\overline{\zeta}\gamma^*\alpha^*)\jmath_1(-\sigma^2\gamma^*\alpha)+\jmath_2\bigl(-\overline{q}\overline{\zeta}{\gamma^*}^2\bigr)\jmath_1(\alpha^2)\bigr)\\
&=\jmath_2\bigl({\alpha^*}^2\bigr)\jmath_1\bigl(-q{\gamma^*}^2\bigr)+\zeta\jmath_2(\gamma^*\alpha^*)\jmath_1(-\sigma^2\gamma^*\alpha)+\jmath_2\bigl(-q{\gamma^*}^2\bigr)\jmath_1(\alpha^2)\\
&=\jmath_1\bigl(-q{\gamma^*}^2\bigr)\jmath_2\bigl({\alpha^*}^2\bigr)+\zeta\overline{\zeta}\jmath_1(-\sigma^2\gamma^*\alpha)\jmath_2(\gamma^*\alpha^*)+\jmath_1(\alpha^2)\jmath_2\bigl(-q{\gamma^*}^2\bigr)\\
&=\jmath_1(\alpha^2)\jmath_2\bigl(-q{\gamma^*}^2\bigr)+\jmath_1(-\sigma^2\gamma^*\alpha)\jmath_2(\gamma^*\alpha^*)+\jmath_1\bigl(-q{\gamma^*}^2\bigr)\jmath_2\bigl({\alpha^*}^2\bigr),\\
\DSUq(\gamma^*\alpha^*)&=\zeta\DSUq(\zeta\alpha\gamma)^*\\
&=\zeta\bigl(\jmath_1(\zeta\alpha\gamma)\jmath_2(\alpha^2)+\jmath_1(\I-\sigma^2\gamma^*\gamma)\jmath_2(\zeta\alpha\gamma)+\jmath_1(\gamma^*\alpha^*)\jmath_2(-q\zeta\gamma^2)\bigr)^*\\
&=\zeta\bigl(\jmath_2\bigl({\alpha^*}^2\bigr)\jmath_1(\overline{\zeta}\gamma^*\alpha^*)+\jmath_2(\overline{\zeta}\gamma^*\alpha^*)\jmath_1(\I-\sigma^2\gamma^*\gamma)+\jmath_2\bigl(-\overline{q}\overline{\zeta}{\gamma^*}^2\bigr)\jmath_1(\alpha\gamma)\bigr)\\
&=\jmath_2\bigl({\alpha^*}^2\bigr)\jmath_1(\gamma^*\alpha^*)+\jmath_2(\gamma^*\alpha^*)\jmath_1(\I-\sigma^2\gamma^*\gamma)+\jmath_2\bigl(-\overline{q}{\gamma^*}^2\bigr)\jmath_1(\alpha\gamma)\\
&=\jmath_1(\gamma^*\alpha^*)\jmath_2\bigl({\alpha^*}^2\bigr)+\jmath_1(\I-\sigma^2\gamma^*\gamma)\jmath_2(\gamma^*\alpha^*)+\zeta^2\jmath_1(\alpha\gamma)\jmath_2\bigl(-\overline{q}{\gamma^*}^2\bigr)\\
&=\jmath_1(\gamma^*\alpha^*)\jmath_2\bigl({\alpha^*}^2\bigr)+\jmath_1(\I-\sigma^2\gamma^*\gamma)\jmath_2(\gamma^*\alpha^*)+\jmath_1(\zeta\alpha\gamma)\jmath_2\bigl(-q{\gamma^*}^2\bigr)\\
&=\jmath_1(\zeta\alpha\gamma)\jmath_2\bigl(-q{\gamma^*}^2\bigr)+\jmath_1(\I-\sigma^2\gamma^*\gamma)\jmath_2(\gamma^*\alpha^*)+\jmath_1(\gamma^*\alpha^*)\jmath_2\bigl({\alpha^*}^2\bigr),\\
\DSUq\bigl({\alpha^*}^2\bigr)&=\DSUq(\alpha^2)^*\\
&=\bigl(\jmath_1(\alpha^2)\jmath_2(\alpha^2)+\jmath_1(-\sigma^2\gamma^*\alpha)\jmath_2(\zeta\alpha\gamma)+\jmath_1\bigl({-q\gamma^*}^2\bigr)\jmath_2(-q\zeta\gamma^2)\bigr)^*\\
&=\jmath_2\bigl({\alpha^*}^2\bigr)\jmath_1\bigl({\alpha^*}^2\bigr)+\jmath_2(\overline{\zeta}\gamma^*\alpha^*)\jmath_1(-\sigma^2\alpha^*\gamma)+\jmath_2\bigl(-\overline{q}\overline{\zeta}{\gamma^*}^2\bigr)\jmath_1(-\overline{q}\gamma^2)\\
&=\jmath_1\bigl({\alpha^*}^2\bigr)\jmath_2\bigl({\alpha^*}^2\bigr)+\zeta\jmath_1(-\sigma^2\alpha^*\gamma)\jmath_2(\overline{\zeta}\gamma^*\alpha^*)+\zeta^4\jmath_1(-\overline{q}\gamma^2)\jmath_2\bigl(-\overline{q}\overline{\zeta}{\gamma^*}^2\bigr)\\
&=\jmath_1\bigl({\alpha^*}^2\bigr)\jmath_2\bigl({\alpha^*}^2\bigr)+\jmath_1(-\sigma^2\alpha^*\gamma)\jmath_2(\gamma^*\alpha^*)+\zeta^2\jmath_1(-q\zeta\gamma^2)\jmath_2\bigl(-\overline{q}\overline{\zeta}{\gamma^*}^2\bigr)\\
&=\jmath_1\bigl({\alpha^*}^2\bigr)\jmath_2\bigl({\alpha^*}^2\bigr)+\jmath_1(-\sigma^2\alpha^*\gamma)\jmath_2(\gamma^*\alpha^*)+\jmath_1(-q\zeta\gamma^2)\jmath_2\bigl(-q{\gamma^*}^2\bigr)\\
&=\jmath_1(-q\zeta\gamma^2)\jmath_2\bigl(-q{\gamma^*}^2\bigr)+\jmath_1(-\sigma^2\alpha^*\gamma)\jmath_2(\gamma^*\alpha^*)+\jmath_1\bigl({\alpha^*}^2\bigr)\jmath_2\bigl({\alpha^*}^2\bigr).
\end{align*}
} % \allowdisplaybreaks


\begin{thebibliography}{10}

\bibitem{connes1980}
Alain Connes.
\newblock {$C^{\ast} $} alg\`ebres et g\'{e}om\'{e}trie diff\'{e}rentielle.
\newblock {\em C. R. Acad. Sci. Paris S\'{e}r. A-B}, 290(13):A599--A604, 1980.

\bibitem{ldgeometry}
Ludwik D{ą}browski.
\newblock Geometry of quantum spheres.
\newblock {\em J. Geom. Phys.}, 56(1):86--107, 2006.

\bibitem{dhhmw}
Ludwik D{ą}browski, Tom Hadfield, Piotr~M. Hajac, Rainer Matthes, and Elmar
  Wagner.
\newblock Index pairings for pullbacks of {${\rm C}^*$}-algebras.
\newblock In {\em Operator algebras and quantum groups}, volume~98 of {\em
  Banach Center Publ.}, pages 67--84. Polish Acad. Sci. Inst. Math., Warsaw,
  2012.

\bibitem{HoSz}
Jeong~Hee Hong and Wojciech Szyma\'{n}ski.
\newblock Quantum spheres and projective spaces as graph algebras.
\newblock {\em Comm. Math. Phys.}, 232(1):157--188, 2002.

\bibitem{KMRW}
Pawe{\l} Kasprzak, Ralf Meyer, Sutanu Roy, and Stanis{\l}aw~L. Woronowicz.
\newblock Braided quantum {$\rm SU(2)$} groups.
\newblock {\em J. Noncommut. Geom.}, 10(4):1611--1625, 2016.

\bibitem{proj}
Pawe{\l} Kasprzak and Piotr~M. So{\l}tan.
\newblock Quantum groups with projection on von {N}eumann algebra level.
\newblock {\em J. Math. Anal. Appl.}, 427(1):289--306, 2015.

\bibitem{majid91braidedgr}
Shahn Majid.
\newblock Examples of braided groups and braided matrices.
\newblock {\em J. Math. Phys.}, 32(12):3246--3253, 1991.

\bibitem{majid93braidedgroups}
Shahn Majid.
\newblock Braided groups.
\newblock {\em J. Pure Appl. Algebra}, 86(2):187--221, 1993.

\bibitem{majid93braidedlin}
Shahn Majid.
\newblock Quantum and braided linear algebra.
\newblock {\em J. Math. Phys.}, 34(3):1176--1196, 1993.

\bibitem{majidfoundations}
Shahn Majid.
\newblock {\em Foundations of quantum group theory}.
\newblock Cambridge University Press, Cambridge, 1995.

\bibitem{twisted2}
Ralf Meyer, Sutanu Roy, and Stanis{\l}aw~L. Woronowicz.
\newblock Quantum group-twisted tensor products of {${\rm C}^*$}-algebras.
  {II}.
\newblock {\em J. Noncommut. Geom.}, 10(3):859--888, 2016.

\bibitem{MRWtwist}
Ralf Meyer, Sutanu Roy, and Stanis{\l}aw~L. Woronowicz.
\newblock Quantum group-twisted tensor products of {${\rm C}^*$}-algebras.
  {II}.
\newblock {\em J. Noncommut. Geom.}, 10(3):859--888, 2016.

\bibitem{spheres}
Piotr Podle\'{s}.
\newblock Quantum spheres.
\newblock {\em Lett. Math. Phys.}, 14(3):193--202, 1987.

\bibitem{podlesdiff}
Piotr Podle\'{s}.
\newblock Differential calculus on quantum spheres.
\newblock {\em Lett. Math. Phys.}, 18(2):107--119, 1989.

\bibitem{podlesdiff2}
Piotr Podle\'{s}.
\newblock The classification of differential structures on quantum
  {$2$}-spheres.
\newblock {\em Comm. Math. Phys.}, 150(1):167--179, 1992.

\bibitem{podlesqm}
Piotr Podle\'{s}.
\newblock Quantization enforces interaction. {Q}uantum mechanics of two
  particles on a quantum sphere.
\newblock In {\em Infinite analysis, {P}art {A}, {B} ({K}yoto, 1991)},
  volume~16 of {\em Adv. Ser. Math. Phys.}, pages 805--812. World Sci. Publ.,
  River Edge, NJ, 1992.

\bibitem{podles}
Piotr Podle\'{s}.
\newblock Symmetries of quantum spaces. {S}ubgroups and quotient spaces of
  quantum {${\rm SU}(2)$} and {${\rm SO}(3)$} groups.
\newblock {\em Comm. Math. Phys.}, 170(1):1--20, 1995.

\bibitem{rieffelTori}
Marc~A. Rieffel.
\newblock Noncommutative tori---a case study of noncommutative differentiable
  manifolds.
\newblock In {\em Geometric and topological invariants of elliptic operators
  ({B}runswick, {ME}, 1988)}, volume 105 of {\em Contemp. Math.}, pages
  191--211. Amer. Math. Soc., Providence, RI, 1990.

\bibitem{sheu}
Albert Jeu-Liang Sheu.
\newblock Quantization of the {P}oisson {${\rm SU}(2)$} and its {P}oisson
  homogeneous space---the {$2$}-sphere.
\newblock {\em Comm. Math. Phys.}, 135(2):217--232, 1991.
\newblock With an appendix by Jiang-Hua Lu and Alan Weinstein.

\bibitem{exNonCpt}
Piotr~M. So{\l}tan.
\newblock Examples of non-compact quantum group actions.
\newblock {\em J. Math. Anal. Appl.}, 372(1):224--236, 2010.

\bibitem{wangPhD}
Shuzhou Wang.
\newblock {\em General constructions of compact quantum groups}.
\newblock ProQuest LLC, Ann Arbor, MI, 1993.
\newblock Thesis (Ph.D.)--University of California, Berkeley.

\bibitem{wassermann}
Simon Wassermann.
\newblock {\em Exact {$C^*$}-algebras and related topics}, volume~19 of {\em
  Lecture Notes Series}.
\newblock Seoul National University, Research Institute of Mathematics, Global
  Analysis Research Center, Seoul, 1994.

\bibitem{twisted}
Stanis{\l}aw~L. Woronowicz.
\newblock Twisted {${\rm SU}(2)$} group. {A}n example of a noncommutative
  differential calculus.
\newblock {\em Publ. Res. Inst. Math. Sci.}, 23(1):117--181, 1987.

\bibitem{unbo}
Stanis{\l}aw~L. Woronowicz.
\newblock Unbounded elements affiliated with {$C^*$}-algebras and noncompact
  quantum groups.
\newblock {\em Comm. Math. Phys.}, 136(2):399--432, 1991.

\bibitem{Karpacz}
Stanis{\l}aw~L. Woronowicz.
\newblock An example of a braided locally compact group.
\newblock In {\em Quantum groups ({K}arpacz, 1994)}, pages 155--171. PWN,
  Warsaw, 1995.

\bibitem{wysoczanski}
Janusz Wysocza\'{n}ski.
\newblock Twisted product structure and representation theory of the quantum
  group {${\rm U}_q(2)$}.
\newblock {\em Rep. Math. Phys.}, 54(3):327--347, 2004.

\end{thebibliography}
\end{document}